%
%
%
%
\documentclass{amsart}
\usepackage[utf8]{inputenc}
\usepackage{setspace}
\usepackage{mathrsfs}
\usepackage{amssymb}
\usepackage{latexsym}
\usepackage{amsfonts}
\usepackage{amsmath}
\usepackage{eucal}
\usepackage{bm}
\usepackage{bbm}
\usepackage{graphicx}
\usepackage[english]{varioref}
\usepackage[nice]{nicefrac}
\usepackage[all]{xy}
\usepackage{amsthm}
\usepackage{amssymb,amsthm,upref,amscd}

\def\op{\operatorname}

\def\mmod{\kern-1pt\operatorname{-mod}}

\newtheorem{theorem}{Theorem}[section]
\newtheorem{lemma}[theorem]{Lemma}

\newtheorem{remark}[theorem]{Remark}

\theoremstyle{proposition}

\newtheorem{Cv}[theorem]{\bf Convention}
\newtheorem{Prop}[theorem]{\bf Proposition}
\newtheorem{Cor}[theorem]{\bf Corollary}

\numberwithin{equation}{section}



\begin{document}

\title[Abstract parabolic induction]{Abstract induced modules for reductive algebraic groups with Frobenius maps}

\author{Xiaoyu Chen}
\address{Department of Mathematics, Shanghai Normal University,
100 Guilin Road, Shanghai 200234, PR China.}
\email{gauss\_1024@126.com}

\author{Junbin Dong}
\address{Institute of Mathematical Sciences, ShanghaiTech University, 393 Middle Huaxia Road, Pudong, Shanghai 201210, PR China.}
\email{dongjunbin1990@126.com}

\subjclass[2010]{20C07, 20G05}

\date{November 11, 2020}

\keywords{Reductive group, abstract induced module, composition factor.}

\begin{abstract}
Let ${\bf G}$ be a connected reductive algebraic group defined over a finite field $\mathbb{F}_q$ of $q$ elements, and ${\bf B}$ be a Borel subgroup of ${\bf G}$  defined over $\mathbb{F}_q$. Let $\Bbbk$ be a field and we assume that $\Bbbk=\bar{\mathbb{F}}_q $ when $\text{char}\ \Bbbk=\text{char} \ \mathbb{F}_q$.
 We show that the abstract induced module $\mathbb{M}(\theta)=\Bbbk{\bf G}\otimes_{\Bbbk{\bf B}}\theta$ (here $\Bbbk{\bf H}$ is the group algebra of ${\bf H}$ over the field $\Bbbk$ and $\theta$ is a character of ${\bf B}$ over $\Bbbk$) has a composition series (of finite length) if $\text{char}\ \Bbbk\ne \text{char} \ \mathbb{F}_q$. In the case $\Bbbk=\bar{\mathbb{F}}_q$ and $\theta$ is a rational character, we give a necessary and sufficient condition for the existence of a composition series (of finite length) of $\mathbb{M}(\theta)$. We determine all the composition factors whenever a composition series exists. Thus we obtain a large class of abstract infinite-dimensional irreducible $\Bbbk{\bf G}$-modules.
\end{abstract}

\maketitle

\section{Introduction}
The decomposition of certain induced modules is extremely important in the representation theory of algebraic groups and finite groups of Lie type. One very important class of finite-dimensional representations arises from considering the induced module from a character of a Borel subgroup. It is well known that all finite-dimensional rational irreducible modules arise by ``inducing" (see \cite{Jan1} for details) one-dimensional representations of the Borel subgoup (known as the costandard modules), and the decomposition problem of such modules is known as Lusztig's conjecture (cf.~\cite{L1} and \cite{L2}) which is true for large characteristic (cf.~\cite{AJ} and \cite{Fie}) but not always valid in  smaller characteristic (cf.~\cite{Wil}). Each irreducible representation of a finite group with split $BN$-pair of characteristic $p$ over a field of characteristic $p$ occurs in the head (socle) of some induced module from a character of $B$ (cf. \cite{CL}, \cite{SW}). In ordinary representation theory of finite reductive groups, each irreducible module occurs in some virtual representation (known as $R_T^\theta$, cf. \cite{DL}), and this classical theory is also contained in the textbook \cite{Car} and \cite{CE}. For finite groups with a certain set of subquotients, each irreducible module occurs at the head (socle) of the induced module from some ``cuspidal pair" (cf. \cite[Chapter 1]{CE}). There are analogous results in the representation theory of Lie algebras by considering the decomposition of Verma modules and baby Verma modules (cf.~\cite{Ver}, \cite{Ru}). Anyway, in representation theory it is often
a fundamental problem to determine the submodule structures of various classes of induced modules.

The induced modules from a one-dimensional module of a Borel subgroup of a finite reductive group have been investigated in great detail (cf. \cite{Jan2}, \cite{Pil}, and \cite{YY}).  For example, in \cite{Jan2} Jantzen constructed a filtration for such induced modules and gave a sum formula for these filtrations corresponding to  the well known Jantzen filtrations of Weyl modules. In \cite{Pil} C. Pillen proved that the socle and radical filtrations of such modules could be obtained from the filtrations of the generic Weyl modules under similar assumptions as in \cite{Jan2}. It was also showed in the same paper that these modules are rigid.

In contrast to the fruitful results discussed above, little is known for abstract induced modules for connected reductive algebraic groups with Frobenius maps (for example, $GL_n(\bar{\mathbb{F}}_q)$,  $SL_n(\bar{\mathbb{F}}_q)$, $SO_{2n}(\bar{\mathbb{F}}_q)$, $SO_{2n+1}(\bar{\mathbb{F}}_q)$, $Sp_{2n}(\bar{\mathbb{F}}_q)$,$\cdots$). Recently, Nanhua Xi studied certain infinite-dimensional representations of connected reductive groups over
a field of positive characteristic (cf. \cite{Xi}). These objects arise via induction from the group algebra of a Borel subgroup to the group algebra of the whole group. Xi constructed a submodule filtration of the abstract induced module from the trivial character of a Borel subgroup whose subquotients are indexed by the subsets of the set of simple reflections, and they turned out to be pairwise non-isomorphic.
Moreover, the authors of the present paper proved these subquotients are irreducible (see \cite{CD1} for the cross characteristic case and see  \cite{CD2}  for the case of arbitary base field). The first author also made an attempt to study the submodule structure of some induced modules in \cite{C1} and \cite{C2}.

In this paper, we study the existence of a composition series of the abstract induced module from an arbitary character of a Borel subgroup, and determine the composition factors whenever a composition series exists. Let ${\bf G}$ be a connected reductive algebraic group defined over a finite field $\mathbb{F}_q$ of $q$ elements, and $F$ be the standard Frobenius homomorphism on ${\bf G}$ induced by the automorphism $x\mapsto x^q$ on $\bar{\mathbb{F}}_q$. Let ${\bf B}$ be an $F$-stable Borel subgroup and ${\bf T}$ an $F$-stable maximal torus contained in ${\bf B}$. Let $\Bbbk$ be a field and we assume that $\Bbbk=\bar{\mathbb{F}}_q $ when $\text{char}\ \Bbbk=\text{char} \ \mathbb{F}_q$. This paper concerns the abstract induced modules $\mathbb{M}(\theta)=\Bbbk{\bf G}\otimes_{\Bbbk{\bf B}}\theta$ (here $\Bbbk{\bf H}$ is the group algebra of the group ${\bf H}$, and $\theta$ is a character of ${\bf T}$ regarded as a character of ${\bf B}$ by letting unipotent radical act trivially). We show that $\mathbb{M}(\theta)$ has a composition series if $\op{char}\Bbbk\neq\op{char}\bar{\mathbb{F}}_q$, in which case we determine all the composition factors of $\mathbb{M}(\theta)$. However, if $\Bbbk=\bar{\mathbb{F}}_q$, the situation is more complicated. Under the assumption that $\theta$ is a rational character of ${\bf T}$, we show that $\mathbb{M}(\theta)$ has a composition series if and only if $\theta$ is antidominant (see Section 4 for the definition), in which case the submodule structure is analogous to the  cross characteristic case.  In particular, we find a large class of infinite-dimensional irreducible abstract representations of ${\bf G}$.

Let us briefly give the idea of the proofs of our main results. The idea of the proof  of Theorem \ref{main} and \ref{antidominant} is similar to  the proof of \cite[Theorem 3.1]{CD1} and \cite[Theorem 4.1]{CD2} respectively.
However the arguments in this paper are more technical and we have to overcome some new challenges.
Let ${\bf U}$ be the unipotent radical of ${\bf B}$, and $U_{q^a}$ be its $\mathbb{F}_{q^a}$-points. In the case either $\op{char}\Bbbk\neq \op{char}\bar{\mathbb{F}}_q$ or $\Bbbk=\bar{\mathbb{F}}_q$ and $\theta$ is antidominant, we construct an explicit filtration of $\mathbb{M}(\theta)$. For any subquotient $E$ of this filtration, we prove the irreducibility of $E$ through the following steps: (1) Show that $E$ is a cyclic module (which is obvious by definition); (2) Show that any submodule of $E$ contains an $U_{q^a}$-fixed point for a sufficiently large integer $a$; (3) Show that any $U_{q^a}$-fixed point is transited by $\Bbbk{\bf G}$ to a generator of $E$. It is important to note that the step (3) is a new phenomenon in our case, and we develop a new and highly nontrivial technique to settle it. Some discussions  in the proof of  \cite[Theorem 3.1]{CD1} and \cite[Theorem 4.1]{CD2} also work here under some more general setting. For the simplicity of this article, we omit the proofs of these similar steps and spend more time explaining the new approach which  can deal with  a  general character  $\theta\in \widehat{\bf T}$ not just a trivial character which we consider in \cite{CD1}, \cite{CD2}.
In the case $\Bbbk=\bar{\mathbb{F}}_q$ and $\theta$ is not antidominant, by the transitivity of  Harish-Chandra induction (infinite version) and exactness of the (abstract) induction functor, the non-existence of finite composition series of $\mathbb{M}(\theta)$ reduces to the case ${\bf G}=SL_2(\bar{\mathbb{F}}_q)$. In this case, we prove the non-existence result using the limit process and the classical structural results of the Weyl modules for $SL_2(\bar{\mathbb{F}}_q)$ (cf.\cite{De}).

This paper is organized as follows: In Section 2 we recall some notations, and give some general constructions and results working for any field $\Bbbk$. In particular, it contains the general properties of the abstract induced modules $\mathbb{M}(\theta)$. Section 3 is devoted to studying the decomposition of $\mathbb{M}(\theta)$ in the case $\op{char}\Bbbk\neq \op{char}\bar{\mathbb{F}}_q$.
When $\Bbbk=\bar{\mathbb{F}}_q$ and $\theta$ is rational, Section 4 and Section 5 deal with the existence, respectively non-existence, of  a composition series of $\mathbb{M}(\theta)$ in the antidominant  and the non-antidominant case, respectively.
Using such decomposition of $\mathbb{M}(\theta)$ in previous sections, some more corollaries and conclusions are given in Section 6.

\section{General setting}

Let ${\bf G}$ be a connected reductive algebraic group defined over $\mathbb{F}_q$ with the standard Frobenius homomorphism $F$ induced by the automorphism $x\mapsto x^q$ on $\bar{\mathbb{F}}_q$. Let ${\bf B}$ be an $F$-stable Borel subgroup, and ${\bf T}$ be an $F$-stable maximal torus contained in ${\bf B}$, and ${\bf U}=R_u({\bf B})$ be the ($F$-stable) unipotent radical of ${\bf B}$.
We identify ${\bf G}$ with ${\bf G}(\bar{\mathbb{F}}_q)$ and do likewise for the various subgroups of ${\bf G}$ such as ${\bf B}, {\bf T}, {\bf U}$ $\cdots$. We denote by $\Phi=\Phi({\bf G};{\bf T})$ the corresponding root system, and by $\Phi^+$ (resp. $\Phi^-$) the set of positive (resp. negative) roots determined by ${\bf B}$. Let $W=N_{\bf G}({\bf T})/{\bf T}$ be the corresponding Weyl group. We denote by $\Delta=\{\alpha_i\mid i\in I\}$ the set of simple roots and by $S=\{s_i:=s_{\alpha_i}\mid i\in I\}$ the corresponding simple reflections in $W$. For each $\alpha\in\Phi$, let ${\bf U}_\alpha$ be the root subgroup corresponding to $\alpha$ and we fix an isomorphism $\varepsilon_\alpha: \bar{\mathbb{F}}_q\rightarrow{\bf U}_\alpha$ such that $t\varepsilon_\alpha(c)t^{-1}=\varepsilon_\alpha(\alpha(t)c)$ for any $t\in{\bf T}$ and $c\in\bar{\mathbb{F}}_q$. For any $w\in W$, let ${\bf U}_w$ (resp. ${\bf U}_w'$) be the subgroup of ${\bf U}$ generated by all ${\bf U}_\alpha$  with $w(\alpha)\in\Phi^-$ (resp. $w(\alpha)\in\Phi^+$). The multiplication map ${\bf U}_w\times{\bf U}_w'\rightarrow{\bf U}$ is a bijection (cf. \cite[Proposition 2.5.12]{Car}). For any $J\subset I$, let $W_J$ and ${\bf P}_J$ be the corresponding standard parabolic subgroup of $W$ and ${\bf G}$, respectively. One denotes by $w_J$ the longest element in $W_J$.
We also have the Levi decomposition ${\bf P}_J={\bf L}_J\ltimes{\bf U}_J$, where ${\bf L}_J$ is the subgroup of ${\bf P}_J$ generated by ${\bf T}$, and all ${\bf U}_{\alpha_i}$ and ${\bf U}_{-\alpha_i}$ with $i\in J$ and ${\bf U}_J=R_u({\bf P}_J)$.

Let $\Bbbk$ be a field and we assume that $\Bbbk=\bar{\mathbb{F}}_q $ when $\text{char}\ \Bbbk=\text{char} \ \mathbb{F}_q$. All representations of ${\bf G}$ we consider are over the field $\Bbbk$.
Denote by $\Bbbk{\bf G}$ the group algebra of ${\bf G}$. For a $F$-stable subgroup ${\bf H}$ of ${\bf G}$, we denote by $H_{q^a}$ the $\mathbb{F}_{q^a}$-points of ${\bf H}$ (equivalently, $F^a$-fixed points of ${\bf H}$). Thus, one identifies ${\bf H}$ with the union of all $H_{q^a}$. For any finite subset $X$ of ${\bf G}$, let $\underline{X}:=\sum_{x\in X}x \in\Bbbk{\bf G}$. This notation will be frequently used later. Without loss of generality, we make the following convention throughout this paper.

\begin{Cv}
We assume that all representatives of the elements of $W$ involved are in $G_q$ without loss of generality. $($Otherwise, we replace $q$ by a sufficiently large power of $q$. This does no harm to the result.$)$
\end{Cv}

Let $\widehat{\bf T}$ be the set of characters of ${\bf T}$ when $\text{char}\ \Bbbk\ne\text{char} \ \mathbb{F}_q$ and let it be the set of rational characters when  $\Bbbk=\bar{\mathbb{F}}_q $. Each $\theta\in\widehat{\bf T}$ can be regarded as a character of ${\bf B}$ by the homomorphism ${\bf B}\rightarrow{\bf T}$. Let ${\Bbbk}_\theta$ be the corresponding ${\bf B}$-module.
We are interested in the induced module $\mathbb{M}(\theta)=\Bbbk{\bf G}\otimes_{\Bbbk{\bf B}}{\Bbbk}_\theta$. Let ${\bf 1}_{\theta}$ be a fixed nonzero element in ${\Bbbk}_\theta$. We abbreviate $x{\bf 1}_{\theta}:=x\otimes{\bf 1}_{\theta}\in\mathbb{M}(\theta)$ for $x\in {\bf G}$.

\begin{Prop}\label{indec}
For any $\theta\in\widehat{\bf T}$, we have the isomorphism $\op{End}_{\Bbbk{\bf G}}(\mathbb{M}(\theta))\simeq\Bbbk$ as $\Bbbk$-algebras. In particular, the $\Bbbk {\bf G}$-module $\mathbb{M}(\theta)$ is indecomposable.
\end{Prop}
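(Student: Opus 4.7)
The plan is to describe any $\Bbbk{\bf G}$-endomorphism $\phi$ of $\mathbb{M}(\theta)$ by its value on the cyclic generator ${\bf 1}_\theta$ and then force all but the scalar multiple of ${\bf 1}_\theta$ to vanish using the ${\bf B}$-equivariance $b\phi({\bf 1}_\theta) = \theta(b)\phi({\bf 1}_\theta)$. From the Bruhat decomposition ${\bf G} = \bigsqcup_{w\in W}{\bf U}_w\dot{w}{\bf B}$, the module $\mathbb{M}(\theta)$ has the $\Bbbk$-basis $\{u\dot{w}{\bf 1}_\theta : w \in W, u \in {\bf U}_w\}$, so I would write
\[\phi({\bf 1}_\theta) = \sum_{w \in W}\sum_{u \in {\bf U}_w} c_{w,u}\, u\dot{w}{\bf 1}_\theta\]
with only finitely many $c_{w,u} \in \Bbbk$ nonzero, and the task reduces to showing that $c_{w,u} = 0$ unless $(w,u) = (1,1)$.

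Step one is ${\bf T}$-equivariance. For $t \in {\bf T}$ one computes $t \cdot u\dot{w}{\bf 1}_\theta = \theta(\dot{w}^{-1}t\dot{w})\,(tut^{-1})\dot{w}{\bf 1}_\theta$, and comparing with $\theta(t)\phi({\bf 1}_\theta)$ coefficient by coefficient in the Bruhat basis yields
\[c_{w,\, t^{-1}u't} = \theta(t)\,\theta(\dot{w}^{-1}t\dot{w})^{-1}\, c_{w, u'}\]
for every $t \in {\bf T}$ and every $u' \in {\bf U}_w$. Hence the entire ${\bf T}$-conjugation orbit of any $u'$ with $c_{w,u'} \ne 0$ is contained in the finite support of $c_{w,\cdot}$. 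But for $u' \ne 1$, writing $u' = \prod_i u_{\beta_i}(x_i)$ in a fixed ordering of the roots $\beta_i$ of ${\bf U}_w$ with some $x_i \ne 0$ gives $t^{-1}u't = \prod_i u_{\beta_i}(\beta_i(t^{-1})x_i)$, and since each root $\beta_i$ is a surjective character of ${\bf T}$ onto $\bar{\mathbb{F}}_q^\times$ on $\bar{\mathbb{F}}_q$-points, this orbit is infinite, a contradiction. Therefore $c_{w,u'} = 0$ for all $u' \ne 1$, and $\phi({\bf 1}_\theta) = \sum_{w \in W} c_w\, \dot{w}{\bf 1}_\theta$.

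Step two is ${\bf U}$-equivariance. For $u_0 \in {\bf U}$, decompose uniquely $u_0 = u_0^{(1)}u_0^{(2)}$ with $u_0^{(1)} \in {\bf U}_w$ and $u_0^{(2)} \in {\bf U}_w'$; since $\dot{w}^{-1}u_0^{(2)}\dot{w} \in {\bf U}$ and $\theta|_{\bf U} = 1$, one gets $u_0\dot{w}{\bf 1}_\theta = u_0^{(1)}\dot{w}{\bf 1}_\theta$. The equivariance $u_0\phi({\bf 1}_\theta) = \phi({\bf 1}_\theta)$ then reads $\sum_w c_w u_0^{(1)}\dot{w}{\bf 1}_\theta = \sum_w c_w\dot{w}{\bf 1}_\theta$ for every $u_0 \in {\bf U}$. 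For $w \ne 1$ there exists $\alpha \in \Phi^+$ with $w\alpha \in \Phi^-$, so ${\bf U}_\alpha \subseteq {\bf U}_w$ is nontrivial, and taking any nonidentity $u_0 \in {\bf U}_\alpha$ gives $u_0^{(1)} = u_0 \ne 1$, forcing $c_w = 0$. Hence $\phi({\bf 1}_\theta) = c_1 {\bf 1}_\theta$ for a single $c_1 \in \Bbbk$, which gives the algebra isomorphism $\op{End}_{\Bbbk{\bf G}}(\mathbb{M}(\theta)) \simeq \Bbbk$; indecomposability of $\mathbb{M}(\theta)$ is then immediate, since a field is local. I expect the main obstacle to be the ${\bf T}$-orbit argument in step one, which depends crucially on working with the full algebraic group over $\bar{\mathbb{F}}_q$ so that every root is a surjective character of ${\bf T}$ onto $\bar{\mathbb{F}}_q^\times$; the analogous statement for a finite group of Lie type would fail and require Hecke-algebra techniques.
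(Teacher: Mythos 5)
Your proof is correct and follows essentially the same route as the paper: determine $\phi$ by its value on the cyclic generator ${\bf 1}_\theta$ and show, via the Bruhat basis and the infinitude of orbits, that this value lies in $\Bbbk{\bf 1}_\theta$. The paper compresses this to the single observation $f({\bf 1}_\theta)\in\mathbb{M}(\theta)^{\bf U}=\Bbbk{\bf 1}_\theta$, using only ${\bf U}$-invariance where you split the work between ${\bf T}$-equivariance and ${\bf U}$-equivariance; the difference is cosmetic.
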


\begin{proof} Let $f\in\op{End}_{\Bbbk{\bf G}}(\mathbb{M}(\theta))$. Noting that $f({\bf 1}_\theta)\in \mathbb{M}(\theta)^{\bf U}$, it is enough to show  that $\mathbb{M}(\theta)^{\bf U}=\Bbbk{\bf 1}_\theta$. Using the Bruhat decomposition, we see that $$\mathbb{M}(\theta)= \sum_{w\in W}\Bbbk {\bf U} \dot{w} {\bf 1}_{\theta},$$
where $\dot{w}$ is a fixed representative of $w\in W$. Now let $\xi \in \mathbb{M}(\theta)^{\bf U}$ with the following expression
$$\xi= \sum_{w\in W}\sum_{x\in {\bf U}} a_{x,w} x \dot{w} {\bf 1}_\theta, \ \ \ \  a_{x,w}\in \Bbbk .$$
There exists a positive integer $m$ such that $x \in  U_{q^m}$ when $a_{x,w}\ne 0$.
Now assume that there is an element  $w\ne e$ ($e$ is the neutral element in $W$) such that $a_{x, w}\ne 0$ for some $x\in  U_{q^m}$. Then it is easy to see that  $u \xi \ne \xi$ for any $u \in {\bf U} \setminus U_{q^m} $ which is a contradiction. Thus $\xi =a {\bf 1}_\theta $ for some $a\in \Bbbk $ and we have $\mathbb{M}(\theta)^{\bf U}=\Bbbk{\bf 1}_\theta$ which completes the proof.
\end{proof}

\begin{remark}
{\rm Let $\op{tr}$ be the trivial $B_q$-module. It is well known that $\op{Ind}_{B_q}^{G_q}\op{tr}$ is always decomposable, and $\op{End}_{\Bbbk G_q}(\op{Ind}_{B_q}^{G_q}\op{tr})$ is known as Hecke algebra when $\Bbbk=\mathbb{C}$. Proposition \ref{indec} shows that this never happens for a ${\bf G}$-module which is obtained by abstract induction from a character of ${\bf B}$.}
\end{remark}

For each $i \in I$, let ${\bf G}_i$ be the subgroup of $\bf G$ generated by ${\bf U}_{\alpha_i}, {\bf U}_{-\alpha_i}$ and set ${\bf T}_i= {\bf T}\cap {\bf G}_i$. For each $i\in I$, there exists a surjective homomorphism  $\varphi_i: SL_2(\bar{\mathbb{F}}_q)\rightarrow{\bf G}_i$ such that
$$\varphi_i\left(\begin{array}{cc}1 &\ x\\0 &\ 1\end{array}\right)=\varepsilon_{\alpha_i}(x),~~\varphi_i\left(\begin{array}{cc}1 &\ 0\\x &\ 1\end{array}\right)=\varepsilon_{-\alpha_i}(x).$$
For $\theta\in\widehat{\bf T}$, define the subset $I(\theta)$ of $I$ by $$I(\theta)=\{i\in I \mid \theta| _{{\bf T}_i} \ \text {is trivial}\}.$$
The Weyl group $W$ acts naturally on $\widehat{\bf T}$ by
\begin{equation}\label{theta^w}
 (w\cdot \theta ) (t):=\theta^w(t)=\theta(\dot{w}^{-1}t\dot{w})
\end{equation}
 for any $\theta\in\widehat{\bf T}$. Denote by
$W_{\theta}$ the stabilizer of $\theta$. The following lemma is clear.

\begin{lemma}\label{Wtheta}
$\op{(i)}$ The parabolic subgroup $W_{I(\theta)}$ is a subgroup of $W_{\theta}$.

\noindent  $\op{(ii)}$ Conversely, if $W_{\theta}$ is a parabolic subgroup of $W$, then $W_\theta=W_{I(\theta)}$.
\end{lemma}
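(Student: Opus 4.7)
The plan is to reduce both parts to the single observation that for each simple reflection $s_i$, one has $s_i \in W_\theta$ if and only if $i \in I(\theta)$. Once this equivalence is established, (i) is immediate because $W_{I(\theta)}$ is generated by $\{s_i \mid i \in I(\theta)\}$, and (ii) follows because any parabolic subgroup $W_J$ is determined by its set of simple reflections $\{s_i \mid i \in J\}$, so $W_\theta = W_J$ forces $J = I(\theta)$.

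To prove the key equivalence, I would compute the commutator $[\,t, \dot{s}_i^{-1}\,] = t^{-1}\dot{s}_i^{-1} t \dot{s}_i$ for $t \in {\bf T}$. Using the standard formula $\dot{s}_i t \dot{s}_i^{-1} = t\,\alpha_i^{\vee}(\alpha_i(t))^{-1}$ (up to sign conventions that do not affect the image), this commutator lies in the image of $\alpha_i^{\vee}$, which is exactly ${\bf T}_i$. Conversely, since $\alpha_i \colon {\bf T} \to \bar{\mathbb{F}}_q^{\times}$ is a nonzero character of the torus and hence surjective, as $t$ ranges over ${\bf T}$ the commutators sweep out all of ${\bf T}_i$. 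Therefore the equality $\theta^{s_i} = \theta$, rewritten as $\theta\bigl(t^{-1}\dot{s}_i^{-1} t \dot{s}_i\bigr) = 1$ for every $t \in {\bf T}$, is equivalent to $\theta|_{{\bf T}_i}$ being trivial, i.e.\ to $i \in I(\theta)$.

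The only mildly subtle point is the surjectivity of the commutator map onto ${\bf T}_i$ (rather than just containment in ${\bf T}_i$), which is what forces the converse direction in (ii); this is handled by the surjectivity of $\alpha_i$ noted above. Everything else is formal: one uses that the parabolic $W_{I(\theta)}$ is generated by the $s_i$ with $i \in I(\theta)$ for part (i), and that a standard parabolic $W_J$ is determined by the set $\{i \mid s_i \in W_J\} = J$ for part (ii). No serious obstacle is expected, consistent with the authors' remark that the lemma is clear.
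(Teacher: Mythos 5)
Your argument is correct and follows essentially the same line as the paper's: both reduce the lemma to the identity $\theta^{s_i}(t)=\theta(t)\,\theta(t^{-1}\dot{s}_i^{-1}t\dot{s}_i)$ and then to showing that the commutator $t^{-1}\dot{s}_i^{-1}t\dot{s}_i$ always lies in ${\bf T}_i$ and that every element of ${\bf T}_i$ arises this way. The only difference is the final surjectivity step, where the paper reduces to $SL_2(\bar{\mathbb{F}}_q)$ and extracts a square root explicitly, while you invoke $s_i(t)=t\,\alpha_i^{\vee}(\alpha_i(t))^{-1}$ together with the surjectivity of $\alpha_i$ onto $\bar{\mathbb{F}}_q^{\times}$; both are valid.
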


\begin{proof} $\op{(i)}$ Since $W_{I(\theta)}$ is generated by $s_i,  i\in I(\theta)$, it is sufficient to show $\theta^{s_i}=\theta $ for each $i\in I(\theta)$. We have
\begin{equation}\label{si theta}
\theta^{s_i}(t)= \theta(s_i^{-1}ts_i)=\theta(t) \theta(t^{-1}s_i^{-1}ts_i), \ \ \forall \ t\in {\bf T}.
\end{equation}
Now since $t^{-1}s_i^{-1}ts_i \in {\bf T}_i$, we get $\theta(t^{-1}s_i^{-1}ts_i)=1$. Then $\theta^{s_i}=\theta $ and $\op{(i)}$ is proved.

\smallskip
\noindent  $\op{(ii)}$ We  prove that $W_\theta\subset W_{I(\theta)}$ which implies that $W_\theta=W_{I(\theta)}$ by $\op{(i)}$.  Since by assumption $W_\theta$ is generated by some simple reflections, it is enough to show that if $\theta^{s_i}=\theta$, then $\theta|_{{\bf T}_i}$ is trivial. By (\ref{si theta}), it suffices to show that each element $g\in {\bf T}_i$ can be
written as $t^{-1}s_i^{-1}ts_i$ with $t \in {\bf T}_i$. It is enough to verify this in $SL_2(\bar{\mathbb{F}}_q)$ in which case $s_i=s= \begin{pmatrix}
0 & 1 \\ -1 & 0
\end{pmatrix}$ by the surjective homomorphism  $\varphi_i: SL_2(\bar{\mathbb{F}}_q)\rightarrow{\bf G}_i.$  For any fixed $g=\begin{pmatrix}
b & 0 \\ 0 & b^{-1}
\end{pmatrix}$,   let $x$ be a square root of $b^{-1}$ in $\bar{\mathbb{F}}_q$ and $t=\begin{pmatrix}
x & 0 \\ 0 & x^{-1}
\end{pmatrix}$. Then $t^{-1}s^{-1}ts=g$ which completes the proof.
\end{proof}

Let $J\subset I(\theta)$, and ${\bf G}_J$ be the subgroup of $\bf G$ generated by ${\bf G}_i$, $i\in J$. We choose a representative $\dot{w}\in {\bf G}_J$ for each $w\in W_J$. Thus, the element $w{\bf 1}_\theta:=\dot{w}{\bf 1}_\theta$  $(w\in W_J)$ is well-defined. For $J\subset I(\theta)$, we set
$$\eta(\theta)_J=\sum_{w\in W_J}(-1)^{\ell(w)}w{\bf 1}_{\theta},$$
and let $\mathbb{M}(\theta)_J=\Bbbk{\bf G}\eta(\theta)_J$.
Analogous to \cite[Proposition 2.3]{Xi}, we have the following proposition.

\begin{Prop}\label{MJ=KUW}
 For $J\subset I(\theta)$, the $\Bbbk {\bf G}$-module $\mathbb{M}(\theta)_J$ has the form $$\mathbb{M}(\theta)_J=\sum_{w\in W}\Bbbk{\bf U}\dot{w}\eta(\theta)_J=\sum_{w\in W}\Bbbk{\bf U}_{w_Jw^{-1}}\dot{w}\eta(\theta)_J.$$
\end{Prop}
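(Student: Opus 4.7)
My plan is to establish both equalities by showing that $M := \sum_{w \in W}\Bbbk\mathbf{U}\dot w\eta(\theta)_J$ is a $\Bbbk\mathbf{G}$-submodule of $\mathbb{M}(\theta)$ containing $\eta(\theta)_J$ (the $w=e$, $u=1$ term); the reverse inclusion $M \subset \mathbb{M}(\theta)_J$ is tautological, so this yields the first equality. It suffices to verify stability of $M$ under a set of generators of $\mathbf{G}$, namely $\mathbf{B}$ together with the Weyl lifts $\dot s_i$, $i \in I$. The key preliminary observation is that $\mathbf{T}$ acts on $\eta(\theta)_J$ by the character $\theta$: since $J \subset I(\theta)$ yields $W_J \subset W_\theta$ by Lemma~\ref{Wtheta}(i), we have $\theta^v = \theta$ for every $v \in W_J$, hence $t\eta(\theta)_J = \sum_{v\in W_J}(-1)^{\ell(v)}\theta^v(t)\dot v\mathbf{1}_\theta = \theta(t)\eta(\theta)_J$. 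Combined with the trivial $\mathbf{U}$-stability and the identity $t\cdot u\dot w\eta(\theta)_J = \theta^w(t)(tut^{-1})\dot w\eta(\theta)_J$, this gives $\mathbf{B} M \subset M$.

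The main step is to show $\dot s_i M \subset M$. Decomposing $u \in \mathbf{U}$ as $u = u_{\alpha_i}u'$ with $u_{\alpha_i} \in \mathbf{U}_{\alpha_i}$ and $u' \in \mathbf{U}\cap\dot s_i^{-1}\mathbf{U}\dot s_i$, I rewrite $\dot s_iu\dot w = (\dot s_iu_{\alpha_i}\dot s_i^{-1})(\dot s_iu'\dot s_i^{-1})\dot s_i\dot w$; the middle factor lies in $\mathbf{U}$ and $\dot s_i\dot w$ agrees with $\dot{s_iw}$ up to a torus element contributing only a scalar via the $\theta$-action. If $u_{\alpha_i}=1$ the conclusion is immediate. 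Otherwise, $\dot s_iu_{\alpha_i}\dot s_i^{-1}$ is a nontrivial element of $\mathbf{U}_{-\alpha_i}$, and I apply the classical rank-one Bruhat identity $u_{-\alpha_i}(c) = u_{\alpha_i}(c^{-1})\dot s_iu_{\alpha_i}(c^{-1})h_c$ (with $h_c \in \mathbf{T}$) inside $\mathbf{G}_i$; propagating the resulting torus scalars expresses $\dot s_iu\dot w\eta(\theta)_J$ as a $\Bbbk$-linear combination of elements in $\mathbf{U}\dot w\eta(\theta)_J \cup \mathbf{U}\dot{s_iw}\eta(\theta)_J \subset M$.

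For the second equality, I factor $u \in \mathbf{U}$ as $u_1u_2$ with $u_1 \in \mathbf{U}_{w_Jw^{-1}}$ and $u_2 \in \mathbf{U}_{w_Jw^{-1}}'$; it suffices to show $u_2\dot w\eta(\theta)_J \in \sum_{w' \in W}\Bbbk\mathbf{U}_{w_Jw'^{-1}}\dot{w'}\eta(\theta)_J$. The conjugate $\dot w^{-1}u_2\dot w$ is a product of root-group elements $\mathbf{U}_\beta$ with $\beta \in w^{-1}\Phi^+$ and $w_J\beta \in \Phi^+$, forcing $\beta \in (\Phi^+\setminus\Phi_J^+)\cup\Phi_J^-$. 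The first type lies in $R_u(\mathbf{P}_J) = \mathbf{U}_J \subset \mathbf{U}$ and fixes $\eta(\theta)_J$ (since $W_J$ normalizes $\mathbf{U}_J$, so $\dot v^{-1}u\dot v \in \mathbf{U}_J$ acts trivially on $\mathbf{1}_\theta$ for each $v \in W_J$). The second type (negative $J$-root groups in $\mathbf{L}_J$) is absorbed into the sum over $W_J$-translates of $\dot w$ by the rank-one identity applied inside $\mathbf{G}_J$, using crucially the triviality of $\theta|_{\mathbf{T}_i}$ for each $i \in J$.

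The principal technicality is the bookkeeping of torus scalars in the two rank-one reductions: each application of $u_{-\alpha}(c) = u_\alpha(c^{-1})\dot s_\alpha u_\alpha(c^{-1})h_c$ introduces an additional $\dot s_\alpha$ factor and a torus element, and one must carefully verify that these combine to yield elements within the claimed sums. The stability $\theta^v = \theta$ for $v \in W_J$ (Lemma~\ref{Wtheta}(i)) is essential at every step; the argument is the natural generalization of \cite[Proposition 2.3]{Xi} from the trivial character to arbitrary $\theta \in \widehat{\mathbf{T}}$.
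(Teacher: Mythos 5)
Your proposal is correct and follows essentially the same route as the paper: reduce to stability under $\mathbf{B}$ and the lifts $\dot s_i$ via the Bruhat decomposition, split off the $\mathbf{U}_{\alpha_i}$-factor of $u$, apply the rank-one relation $\dot s_iu_i\dot s_i^{-1}=f_i(u_i)\dot s_ih_i(u_i)g_i(u_i)$, and use $\theta^v=\theta$ for $v\in W_J$ (from $J\subset I(\theta)$) to absorb the torus factors. The only delicate point your sketch compresses is the case $w<s_iw$ but $s_iww_J<ww_J$ (so $s_iw=ws_j$ with $j\in J$), where the paper computes $\dot s_ju_j\eta(\theta)_J=(f_j(u_j)-1)\eta(\theta)_J$ directly from the alternating sum defining $\eta(\theta)_J$ — exactly the "rank-one identity inside $\mathbf{G}_J$ using triviality of $\theta|_{\mathbf{T}_j}$" that you flag, so the idea is present and the plan is sound.
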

\begin{proof}
The second equality follows immediately from the following claim which is also frequently used later.
We claim that  if $w\leq s_{\alpha}w$,  $u\dot{w}{\bf 1}_{\theta}=\dot{w}{\bf 1}_{\theta}$ for any $u\in {\bf U}_{\alpha}$. Indeed, $w\le s_\alpha w$ implies that $w^{-1}(\alpha)>0$, and hence $\dot{w}^{-1}{\bf U}_\alpha\dot{w}\subset{\bf U}$. Therefore, $u\dot{w}{\bf 1}_\theta=\dot{w}(\dot{w}^{-1}u\dot{w}){\bf 1}_\theta=\dot{w}{\bf 1}_\theta$. Now we let $$M=\displaystyle \sum_{w\in W}\Bbbk{\bf U}\dot{w}\eta(\theta)_J.$$ Since $M$ contains $\eta(\theta)_J$, to show the first equality it is enough to show that $M$ is a $\Bbbk{\bf G}$-submodule, and hence to show that $M$ is $N_{\bf G}({\bf T})$-stable since ${\bf G}$ is generated by ${\bf B}$ and $N_{\bf G}({\bf T})$. We have to check $\dot{s_i}u\dot{h} \eta(\theta)_J \in  M$ for any $u\in {\bf U}$, $h\in W$, and $i\in I$. Since each element $u\in {\bf U}$ can be written as $u=u_i'u_i$ with $u_i'\in {\bf U}'_{s_i}$ and $u_i\in {\bf U}_{s_i}$, noting that $\dot{s_i}u'_i\dot{s_i}^{-1}\in {\bf U}$, it is enough to check $\dot{s_i} u_i \dot{h} \eta(\theta)_J \in M$ . There is no harm to assume that $\ell(hw_J)=\ell(h)+\ell(w_J)$.

The case $u_i=1$ is clear. For each $u_i\in {\bf U}_{\alpha_i}\backslash\{1\}$, we have
\begin{equation}\label{sus}
\dot{s_i}u_i\dot{s_i}^{-1}=f_i(u_i)\dot{s_i}h_i(u_i)g_i(u_i),
\end{equation}
where $f_i(u_i),g_i(u_i) \in {\bf U}_{\alpha_i}\backslash\{1\}$, and $h_i(u_i)\in {\bf T}_i$ are uniquely determined. Then we need to deal with the following three cases.

\smallskip
\noindent $\op{(i)}$ If $hw_J\leq s_ihw_J$, then $hw\leq s_ihw$ for each $w\in W_J$. For any $w\in W$, we have
$$\dot{s_i} u_i \dot{h} \dot{w} {\bf 1}_{\theta}= \dot{s_i} \dot{h} \dot{w} (\dot{h} \dot{w})^{-1} u_i \dot{h} \dot{w} {\bf 1}_{\theta}. $$
When $w\in W_J$ and $hw\leq s_ihw$,  we get $(\dot{h} \dot{w})^{-1} u_i \dot{h} \dot{w} \in {\bf U} $.
Therefore in this case $$\dot{s_i} u_i \dot{h} \dot{w} {\bf 1}_{\theta}= \dot{s_i} \dot{h} \dot{w} {\bf 1}_{\theta} $$
which implies that
$$\dot{s_i} u_i \dot{h} \eta(\theta)_J =  \sum_{w\in W_J} (-1)^{\ell(w)} \dot{s_i} u_i \dot{h}  \dot{w} {\bf 1}_{\theta} =\dot{s_i}\dot{h} \eta(\theta)_J \in M.$$

\smallskip
\noindent  $\op{(ii)}$ If $s_ih \leq  h$, using  (\ref{sus}) we see that
$$\dot{s_i} u_i \dot{h} \eta(\theta)_J  =f_i(u_i)\dot{s_i}h_i(u_i)g_i(u_i) \dot{s_i} \dot{h} \eta(\theta)_J.$$ By the same discussion as $\op{(i)}$ we have
$g_i(u_i) \dot{s_i} \dot{h} \eta(\theta)_J= \dot{s_i} \dot{h} \eta(\theta)_J.$
Therefore it is not difficult to get
$$\dot{s_i} u_i \dot{h} \eta(\theta)_J  =f_i(u_i)\dot{s_i}h_i(u_i) \dot{s_i} \dot{h} \eta(\theta)_J=\theta^{s_ih}(h_i(u_i)) f_i(u_i)\dot{s_i}^2 \dot{h}\eta(\theta)_J .$$
Noting that $\dot{s_i}^2 \in {\bf T}$, the above equation becomes
\begin{equation}\label{pf2.5-1}
\aligned
\dot{s_i} u_i \dot{h} \eta(\theta)_J
&\ =\theta^{s_ih}(h_i(u_i))\theta^h(\dot{s_i}^2)f_i(u_i)\dot{h}\eta(\theta)_J\\
&\ =\theta^h(\dot{s_i}h_i(u_i)\dot{s_i})f_i(u_i)\dot{h}\eta(\theta)_J \in M
\endaligned
\end{equation}

\smallskip

\noindent  $\op{(iii)}$ If $h \leq  s_ih$ but $s_ihw_J\leq hw_J$, then $s_ih=hs_j$ for some $j\in J$. We can assume that $\dot{s_i}\dot{h}=\dot{h}\dot{s_j}t$, for some $t\in {\bf T}$ and $\dot{s_j}\in{\bf G}_j$. Therefore
$$
\aligned
\dot{s_i} u_i \dot{h} \eta(\theta)_J &\ =\dot{s_i}\dot{h}{\dot{h}}^{-1}u_i\dot{h}\eta(\theta)_J=\dot{h}\dot{s_j}t\dot{h}^{-1}u_i\dot{h}\eta(\theta)_J.
\endaligned
$$
For convenience we set $u_j=t\dot{h}^{-1}u_i\dot{h}t^{-1}\in {\bf U}_{\alpha_j}$, then the above equation becomes
\begin{equation}\label{siuih}
\dot{s_i} u_i \dot{h} \eta(\theta)_J= \theta(t)\dot{h}\dot{s_j}u_j\eta(\theta)_J.
\end{equation}

Firstly, it is clear that
$$\dot{s_j}u_j\eta(\theta)_J  =\dot{s_j}u_j\sum_{{w\in W_J}\atop{w\leq s_jw}} (-1)^{\ell(w)}w{\bf 1}_{\theta}+\dot{s_j}u_j\sum_{{w\in W_J}\atop{s_jw\leq w}} (-1)^{\ell(w)}w{\bf 1}_{\theta}$$
For the first part, we see that
$$
\aligned
\dot{s_j}u_j\sum_{{w\in W_J}\atop{w\leq s_jw}} (-1)^{\ell(w)}w{\bf 1}_{\theta}=\dot{s_j}\sum_{{w\in W_J}\atop{w\leq s_jw}} (-1)^{\ell(w)}w{\bf 1}_{\theta}= -\sum_{{w\in W_J}\atop{s_jw\leq w}} (-1)^{\ell(w)}w{\bf 1}_{\theta}.
\endaligned $$
For the second part, by (\ref{sus}) we have
$$
\aligned
\dot{s_j}u_j\sum_{{w\in W_J}\atop{s_jw\leq w}} (-1)^{\ell(w)}w{\bf 1}_{\theta} &\ = f_j(u_j)\dot{s_j}h_j(u_j)g_j(u_j)\dot{s_j}\sum_{{w\in W_J}\atop{s_jw\leq w}} (-1)^{\ell(w)}w{\bf 1}_{\theta}
\endaligned $$
which also equals to $\displaystyle f_j(u_j)\sum_{{w\in W_J}\atop{s_jw\leq w}} (-1)^{\ell(w)}w{\bf 1}_{\theta}$ by some easy computation.
Combining  these two equations  we get
$$\dot{s_j}u_j\eta(\theta)_J =-\sum_{{w\in W_J}\atop{s_jw\leq w}} (-1)^{\ell(w)}w{\bf 1}_{\theta}+ f_j(u_j)\sum_{{w\in W_J}\atop{s_jw\leq w}} (-1)^{\ell(w)}w{\bf 1}_{\theta}.$$
It can be also written as
$$ -\sum_{{w\in W_J}\atop{s_jw\leq w}} (-1)^{\ell(w)}w{\bf 1}_{\theta}+f_j(u_j)\eta(\theta)_J-f_j(u_j)\sum_{{w\in W_J}\atop{w\leq s_jw}} (-1)^{\ell(w)}w{\bf 1}_{\theta}
$$
Noting that
$$f_j(u_j)\sum_{{w\in W_J}\atop{w\leq s_jw}} (-1)^{\ell(w)}w{\bf 1}_{\theta} = \sum_{{w\in W_J}\atop{w\leq s_jw}} (-1)^{\ell(w)}w{\bf 1}_{\theta},$$
then we have $\dot{s_j}u_j\eta(\theta)_J =(f_j(u_j)-1) \eta(\theta)_J. $

\smallskip
By the above discussion, now (\ref{siuih}) becomes
$$
\aligned \dot{s_i} u_i \dot{h} \eta(\theta)_J = \theta(t)\dot{h}(f_j(u_j)-1) \eta(\theta)_J
 = \theta(t)(\dot{h}f_j(u_j)\dot{h}^{-1}-1)\dot{h}\eta(\theta)_J.
\endaligned $$
Noting that $u_j=t\dot{h}^{-1}u_i\dot{h}t^{-1}$,  combining (\ref{sus}) and the following two equations
$$\dot{h}\dot{s_j}u_j\dot{s_j}^{-1}\dot{h}^{-1}=\dot{h}\dot{s_j}t\dot{h}^{-1}u_i\dot{h}t^{-1}\dot{s_j}^{-1}
\dot{h}^{-1}=\dot{s_i}u_i\dot{s_i}^{-1},$$ $$\dot{h}\dot{s_j}u_j\dot{s_j}^{-1}\dot{h}^{-1}=(\dot{h}f_j(u_j)\dot{h}^{-1})(\dot{h}\dot{s_j}\dot{h}^{-1})
(\dot{h}h_j(u_j)\dot{h}^{-1})(\dot{h}g_j(u_j)\dot{h}^{-1}),$$
we get $f_i(u_i)=\dot{h}f_j(u_j)\dot{h}^{-1}$, and hence we have
\begin{equation}\label{pf2.5-2}
\dot{s_i} u_i \dot{h} \eta(\theta)_J =\theta(t)(f_i(u_i)-1)\dot{h}\eta(\theta)_J \in M.
\end{equation}

\smallskip
Now combining $\op{(i)}$, $\op{(ii)}$  and $\op{(iii)}$, the proposition is proved.
\end{proof}

For $w\in W$, denote by  $\mathscr{R}(w)=\{i\in I\mid ws_i<w\}$. For any subset $J\subset I$ and $K\subset I(\theta)$ we set
$$
\aligned
X_J &\ =\{x\in W\mid x~\op{has~minimal~length~in}~xW_J\};\\
Z_K &\ =\{w\in X_K \mid \mathscr{R}(ww_K)\subset K\cup (I\backslash I(\theta))\}.
\endaligned
$$
For each $w\in W$, let
$$C_w=\sum_{y\leq w}(-1)^{\ell(w)-\ell(y)}P_{y,w}(1)y\in\Bbbk W,$$
where $P_{y,w} \in \mathbb{Z}[t]$ is  the  Kazhdan-Lusztig polynomial associated to the pair $y, w\in W$ (cf. \cite[Theorem 1.1]{KL}). According to \cite[Theorem 1.1]{KL}, the elements $C_w$ with $w\in W$ form a basis of $\Bbbk W$. In particular, we have $C_{w_J}=\sum_{y\in W_J}(-1)^{\ell(w_Jy)}y$ because $P_{y,w_J}=1$ (cf. \cite[Lemma 2.6 (vi)]{KL}).  By \cite[Lemma 2.8 (c)]{Ge}, for $x\in X_J$, we have the uni-triangular relation
\begin{equation}\label{equ1}C_{xw_J}=xC_{w_J}+ \sum_{ y\in X_J, y<x}a_yyC_{w_J},\quad a_y\in \Bbbk\end{equation}
with its inverse
\begin{equation}\label{equ2} xC_{w_J}=C_{xw_J}+ \sum_{ y\in X_J, y<x}a'_yC_{yw_J},\quad a'_y\in\Bbbk.\end{equation}

\begin{lemma}\label{KL-basis}
Let $J\subset I(\theta)$.  Then the set $\{wC_{w_K}|J\subset K \subset I(\theta),~w\in Z_K\}$ forms a basis of  $\Bbbk WC_{w_J}$. In particular, we have
$$\Bbbk WC_{w_J}=\sum_{w\in Z_J}\Bbbk wC_{w_J} + \sum_{J\subsetneq K\subset I(\theta)}\Bbbk WC_{w_K}.$$
\end{lemma}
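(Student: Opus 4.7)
The plan is to exploit the well-known fact that $\{C_{xw_J} : x\in X_J\}$ is a $\Bbbk$-basis of $\Bbbk W C_{w_J}$ (this follows from the triangular relations (\ref{equ1})--(\ref{equ2})), and to show that the proposed set is obtained from this basis by a unitriangular change of basis indexed by a canonical bijection. Concretely, define
\[
\phi\colon\{(K,w) : J\subset K\subset I(\theta),\ w\in Z_K\}\longrightarrow X_J,\qquad \phi(K,w)=ww_Kw_J.
\]
The first step is to verify that $\phi$ really lands in $X_J$: since $w\in X_K\subset X_J$ and $\ell(ww_K)=\ell(w)+\ell(w_K)=\ell(w)+\ell(w_Kw_J)+\ell(w_J)$, the element $ww_Kw_J$ has length $\ell(w)+\ell(w_Kw_J)$ and satisfies $\ell((ww_Kw_J)w_J)=\ell(ww_Kw_J)+\ell(w_J)$, so $ww_Kw_J\in X_J$.

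The main technical step is to construct the inverse of $\phi$. Given $x\in X_J$, note that $J\subset \mathscr{R}(xw_J)$ because right-multiplying $xw_J$ by any $s_j$ ($j\in J$) shortens it. Set
\[
K:=\mathscr{R}(xw_J)\cap I(\theta),
\]
so that $J\subset K\subset I(\theta)$. Using the standard parabolic factorization, write $xw_J=w\cdot u$ with $w\in X_K$ and $u\in W_K$; since $K\subset \mathscr{R}(xw_J)$, the factor $u$ must be the longest element of $W_K$, i.e.\ $u=w_K$. Then $\mathscr{R}(ww_K)=\mathscr{R}(xw_J)\subset K\cup(I\setminus I(\theta))$, so $w\in Z_K$, and clearly $\phi(K,w)=x$. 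Uniqueness follows because $w\in Z_K$ forces $K\subset\mathscr{R}(ww_K)\subset K\cup(I\setminus I(\theta))$, pinning down $K=\mathscr{R}(xw_J)\cap I(\theta)$ from $x$. Thus $\phi$ is a bijection.

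To finish, I apply (\ref{equ2}) with $J$ replaced by $K$ to get
\[
wC_{w_K}=C_{ww_K}+\sum_{y\in X_K,\ y<w}a'_{y}C_{yw_K}=C_{\phi(K,w)\cdot w_J}+\sum_{y\in X_K,\ y<w}a'_{y}C_{\phi(K,y)\cdot w_J},
\]
where I used $yw_K=(yw_Kw_J)w_J$ with $yw_Kw_J\in X_J$. A standard property of the projection $W\to X_J$, $v\mapsto v'$ (where $v=v'w_J$, $v'\in X_J$, whenever applicable) preserves Bruhat order, so $y<w$ in $X_K$ implies $\phi(K,y)<\phi(K,w)$ in $X_J$. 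Choosing any total order on $X_J$ refining the Bruhat order and transporting it to $\{(K,w)\}$ via $\phi$, the transition matrix from $\{wC_{w_K}\}$ to $\{C_{xw_J}\}_{x\in X_J}$ is upper unitriangular, hence invertible, and the first assertion follows. For the ``in particular'' statement, split the basis according to whether $K=J$ or $J\subsetneq K$: the $K=J$ part spans $\sum_{w\in Z_J}\Bbbk wC_{w_J}$, while each $K\supsetneq J$ block is contained in $\Bbbk WC_{w_K}$, so the sum equals all of $\Bbbk WC_{w_J}$.

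The main obstacle is the bijectivity of $\phi$, specifically identifying the right choice $K=\mathscr{R}(xw_J)\cap I(\theta)$; once that is in place, the triangularity is routine.
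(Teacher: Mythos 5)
Your proof is correct, and it takes essentially the same approach as the paper: both arguments rest on the combinatorial decomposition $X_Jw_J=\bigsqcup_{J\subset K\subset I(\theta)}Z_Kw_K$ (which is exactly what your bijection $\phi$ establishes) together with the triangular relations (\ref{equ1})--(\ref{equ2}) for the Kazhdan--Lusztig basis. The only differences are presentational: the paper states the disjoint union as an unproved remark and then argues by induction on $\ell(x)$ to place each $xC_{w_J}$ in the span, whereas you make the bijection explicit (in particular you construct the inverse via $K=\mathscr{R}(xw_J)\cap I(\theta)$) and phrase the conclusion as an upper unitriangular change of basis, invoking Bruhat-order compatibility of the map $X_K\ni y\mapsto yw_Kw_J\in X_J$ to get the triangular shape. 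Your version is somewhat more self-contained, since the paper leaves the disjoint-union claim to the reader, but the underlying idea is the same.
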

\begin{proof}
Firstly we note that $$X_Jw_J=\displaystyle \bigcup_{J\subset K \subset I(\theta)} Z_Kw_K \ \ \text{(disjoint union)}.$$ Let $V$ be the space spanned by $\{wC_{w_K}|J\subset K \subset I(\theta),~w\in Z_K\}$. It is enough to prove that $xC_{w_J}\in V$ for any $x\in X_J$. We show this by induction on $\ell(x)$.   The case $\ell(x)=0$ is trivial. Assume that $\ell(x)>0$. The result is trivial if $x\in Z_J$.  For $x\not\in Z_J$, we have $xw_J=yw_L$ for some $L\varsupsetneq J$ and $y\in Z_L$, and hence $C_{xw_J}=C_{yw_L}\in V$ by (\ref{equ1}) and induction. Moreover, we have
\begin{equation}\label{equ3}
xC_{w_J}=C_{xw_J}+\sum_{z<x,z\in X_J}k_zC_{zw_J},~~k_z\in\Bbbk
\end{equation}
by (\ref{equ2}).
It follows that $xC_{w_J}\in V$ by (\ref{equ1}), (\ref{equ3}), and induction. This completes the proof.
\end{proof}

For any $w\in W_{I(\theta)}$,  set $c_w=(-1)^{\ell(w)}C_w{\bf 1}_{\theta}\in \mathbb{M}(\theta).$
By \cite[Lemma 2.6 (vi)]{KL}, we  have $c_{w_J} = \eta(\theta)_J$ for any subset $J$ of $I(\theta)$. Since $\dot{s_i}c_{w_J}=-c_{w_J}$ if $i\in J$, we have
\begin{equation}\label{m=kuw}
\mathbb{M}(\theta)_J=\sum_{w\in X_J}\Bbbk {\bf U}_{w_Jw^{-1}}\dot{w}\eta(\theta)_J
\end{equation}
for any $J\subset I(\theta)$ by Proposition \ref{MJ=KUW}. Set
$$E(\theta)_J=\mathbb{M}(\theta)_J/\mathbb{M}(\theta)_J',$$
where $\mathbb{M}(\theta)_J'$ is the sum of all $\mathbb{M}(\theta)_K$ with $J\subsetneq K\subset I(\theta)$. This construction generalizes \cite[2.6]{Xi}. For each $J\subset I(\theta)$, denote by $C(\theta)_J$ the image of $\eta(\theta)_J$ in $E(\theta)_J$. The following proposition gives a basis of $E(\theta)_J$.

\begin{Prop}\label{DesEJ}
 The set $\{u\dot{w}C(\theta)_J \mid w\in Z_J, u\in {\bf U}_{w_Jw^{-1}} \}$ forms a basis of $E(\theta)_J$.
\end{Prop}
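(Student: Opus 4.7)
The plan is to establish spanning and linear independence separately. Spanning is driven by lifting the Kazhdan--Lusztig expansion of Lemma \ref{KL-basis} to $\mathbb{M}(\theta)$, while linear independence exploits the Bruhat decomposition $\mathbb{M}(\theta) = \bigoplus_{g \in W}\Bbbk{\bf U}_g\dot{g}{\bf 1}_\theta$ together with the disjoint-union structure $X_Jw_J = \bigsqcup_{J \subset K \subset I(\theta)}Z_Kw_K$ identified in the proof of Lemma \ref{KL-basis}.

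For spanning, start from (\ref{m=kuw}): $\mathbb{M}(\theta)_J = \sum_{x \in X_J}\Bbbk{\bf U}_{w_Jx^{-1}}\dot{x}\eta(\theta)_J$. For $x \in X_J$, Lemma \ref{KL-basis} yields
$$xC_{w_J} = \sum_{w \in Z_J}\alpha_w\,wC_{w_J} + \sum_{J \subsetneq K \subset I(\theta)}\sum_{w \in Z_K}\beta_{w,K}\,wC_{w_K}$$
in $\Bbbk W$. Since $K \subset I(\theta)$ and representatives of $W_K$ are chosen inside ${\bf G}_K$, one has $C_{w_K}{\bf 1}_\theta = (-1)^{\ell(w_K)}\eta(\theta)_K$. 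Applying the $\Bbbk$-linear map $\sigma: \Bbbk W \to \mathbb{M}(\theta)$, $y \mapsto \dot{y}{\bf 1}_\theta$, and using the identities $\sigma(xC_{w_J}) = (-1)^{\ell(w_J)}\dot{x}\eta(\theta)_J$ and $\sigma(wC_{w_K}) = (-1)^{\ell(w_K)}\dot{w}\eta(\theta)_K$ (valid because the relevant products of representatives are length-additive, so $\dot{x}\dot{y} = \dot{xy}$), the $K \supsetneq J$ terms land in $\mathbb{M}(\theta)_K \subset \mathbb{M}(\theta)_J'$, giving $\dot{x}\eta(\theta)_J \equiv \sum_{w \in Z_J}\alpha_w\dot{w}\eta(\theta)_J \pmod{\mathbb{M}(\theta)_J'}$. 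Multiplying by $u \in {\bf U}_{w_Jx^{-1}}$ and using Proposition \ref{MJ=KUW} to rewrite each $u\dot{w}\eta(\theta)_J$ as a sum of terms with $u' \in {\bf U}_{w_Jw'^{-1}}$ and $w' \in X_J$, iterated with Bruhat induction, finishes the spanning.

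For linear independence, suppose $\xi = \sum_{w \in Z_J,\,u \in {\bf U}_{w_Jw^{-1}}}c_{w,u}\,u\dot{w}\eta(\theta)_J$ lies in $\mathbb{M}(\theta)_J'$. Pick $w_0 \in Z_J$ of maximal length such that some $c_{w_0,u} \neq 0$ and inspect the coefficient of the Bruhat cell $w_0w_J$ in the canonical decomposition of $\mathbb{M}(\theta)$. The disjoint-union identity forces $w_0w_J \neq xw_K$ for any $K \supsetneq J$ with $K \subset I(\theta)$ and $x \in X_K$, so $w_0w_J$ is never the top cell of any generator of $\mathbb{M}(\theta)_K$, and the contributions of $\mathbb{M}(\theta)_J'$ to cell $w_0w_J$ come only from strictly non-top terms. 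Comparing the ${\bf U}_{w_0w_J}$-components on both sides, combined with the factorization ${\bf U} = {\bf U}_{w_0w_J}\cdot{\bf U}'_{w_0w_J}$ and the maximality of $\ell(w_0)$, then forces $c_{w_0,u} = 0$ for all $u$, contradicting the choice of $w_0$.

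The main obstacle is the linear-independence step. Since ${\bf U}_{w_Jw^{-1}}$ and ${\bf U}_{ww_J}$ are distinct subgroups of ${\bf U}$ in general---for instance, in $\SL_3$ with $J = \{1\}$ and $w = s_2$, one has ${\bf U}_{w_Jw^{-1}} = {\bf U}_{\alpha_2}{\bf U}_{\alpha_1+\alpha_2}$ while ${\bf U}_{ww_J} = {\bf U}_{\alpha_1}{\bf U}_{\alpha_1+\alpha_2}$---the projection of ${\bf U}_{w_Jw^{-1}}$ into the ${\bf U}_{ww_J}$-part of the canonical normal form is not injective, so the top-cell coefficient alone does not recover $u$. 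The rigorous argument must track how the ${\bf U}'_{w_0w_J}$-component of $u$ spreads across lower cells in the coset $w_0W_J$ and verify that these lower patterns cannot be matched by non-top terms of $\mathbb{M}(\theta)_J'$-generators, which is precisely where the $Z_J$-combinatorics intervene.
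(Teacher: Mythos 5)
Your spanning argument is essentially the paper's: lift the expansion of Lemma \ref{KL-basis} through (\ref{m=kuw}) and note that the $K\supsetneq J$ terms fall into $\mathbb{M}(\theta)_J'$; that half is fine (the reduction of $u\in{\bf U}_{w_Jx^{-1}}$ to the correct subgroups is immediate because ${\bf U}_{w_Jw^{-1}}'$ fixes $\dot{w}\eta(\theta)_J$). The linear--independence half, however, is left open, and the obstacle you flag there is not the right one. The Bruhat cell through $\dot{g}{\bf 1}_\theta$ is parametrized by ${\bf U}_{g^{-1}}$, not ${\bf U}_g$: one has $u\dot{g}{\bf B}=\dot{g}{\bf B}$ exactly when $u\in{\bf U}_{g^{-1}}'$, so $\mathbb{M}(\theta)=\bigoplus_{g\in W}\Bbbk{\bf U}_{g^{-1}}\dot{g}{\bf 1}_\theta$. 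For $g=ww_J$ this gives ${\bf U}_{g^{-1}}={\bf U}_{w_Jw^{-1}}$, which is precisely the group your $u$ ranges over; hence the top-cell component of $u\dot{w}\eta(\theta)_J$ is already in canonical form and distinct $u$ give distinct basis vectors of that cell. There is no lossy projection onto a ``${\bf U}_{ww_J}$-part''; in your $SL_3$ example the cell of $s_2s_1$ is $\Bbbk{\bf U}_{s_1s_2}\dot{s_2}\dot{s_1}{\bf 1}_\theta$ with ${\bf U}_{s_1s_2}={\bf U}_{\alpha_2}{\bf U}_{\alpha_1+\alpha_2}$, exactly ${\bf U}_{w_Jw^{-1}}$.

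The genuine gap is elsewhere. After isolating the cell $w_0w_J$ you only learn that $\sum_u c_{w_0,u}\,u\dot{w_0}\dot{w_J}{\bf 1}_\theta$ equals the cell-$(w_0w_J)$ component of an element of $\mathbb{M}(\theta)_J'$; that component does come only from non-top terms $v\dot{x}\dot{y}{\bf 1}_\theta$ ($y\in W_K$, $y\neq w_K$), but after reduction to canonical form these are a priori arbitrary elements of $\Bbbk{\bf U}_{w_Jw_0^{-1}}\dot{w_0}\dot{w_J}{\bf 1}_\theta$, and nothing you have said rules out a match. The way to close this is to prove the stronger statement that the full collection $\{u\dot{w}\eta(\theta)_K\mid J\subset K\subset I(\theta),\ w\in Z_K,\ u\in{\bf U}_{w_Kw^{-1}}\}$ is linearly independent in $\mathbb{M}(\theta)$: first use your spanning argument (downward induction on $|K|$) to write each $\mathbb{M}(\theta)_K$, $K\supsetneq J$, in terms of these generators, then take a vanishing linear combination and choose $(K_0,w_0)$ with $\ell(w_0w_{K_0})$ maximal among the occurring pairs. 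The disjoint union $X_Jw_J=\bigsqcup_K Z_Kw_K$ makes $(K,w)\mapsto ww_K$ injective, and every non-top term of any occurring generator lies in a cell of strictly smaller length, so the cell $w_0w_{K_0}$ receives contributions only from the generators $u\dot{w_0}\eta(\theta)_{K_0}$, whose top-cell components are distinct canonical basis vectors; hence those coefficients vanish. This simultaneous triangularity over all $K$ is what the paper's one-line appeal to Lemma \ref{KL-basis} encodes, and it is exactly the step your write-up defers.
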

\begin{proof}
The set  is linearly independent by Lemma \ref{KL-basis}. By (\ref{m=kuw}) and Lemma \ref{KL-basis}, we see that $$E(\theta)_J=\sum_{w\in Z_J}\Bbbk {\bf U}_{w_Jw^{-1}}\dot{w}C(\theta)_J.$$ This completes the proof.
\end{proof}

\noindent The following proposition is analogous to \cite[Proposition 2.7]{Xi}.
\begin{Prop}\label{EJ}
Let $\theta_1,\theta_2\in\widehat{\bf T}$ and $K\subset I(\theta_1),L\subset I(\theta_2)$. Then $E(\theta_1)_K$ is isomorphic to $E(\theta_2)_L$ as $\Bbbk {\bf G}$-modules if and only if $K=L$ and $\theta_1=\theta_2$.
\end{Prop}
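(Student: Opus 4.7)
The ``if'' direction is immediate. For the converse, assume $\phi\colon E(\theta_1)_K\xrightarrow{\sim} E(\theta_2)_L$ is a $\Bbbk\mathbf{G}$-module isomorphism. The plan is to set $v=\phi(C(\theta_1)_K)$ and exploit the fact that $v$ cyclically generates $E(\theta_2)_L$ while inheriting several distinguishing algebraic properties of $C(\theta_1)_K$.

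The first step is to compute the $\mathbf{T}$-weight decomposition of $E(\theta)_J$. Using the basis from Proposition \ref{DesEJ} and the identity
\[
t\cdot u\dot{w}C(\theta)_J \;=\; \theta^w(t)\,(tut^{-1})\dot{w}C(\theta)_J,
\]
together with the observation that the conjugation action of $\mathbf{T}$ on $\mathbf{U}_{w_Jw^{-1}}\setminus\{1\}$ has only infinite orbits (every root is a non-trivial character of $\mathbf{T}$), I conclude that no finite linear combination in a single ``$\dot{w}$-block'' can be a $\mathbf{T}$-eigenvector unless it is supported at $u=1$. Hence
\[
E(\theta)_J^{\chi}\;=\;\bigoplus_{w\in Z_J,\;\theta^w=\chi}\Bbbk\,\dot{w}C(\theta)_J.
\]
Next, I record three distinguishing properties of the generator: (a) $t\cdot C(\theta)_J=\theta(t)C(\theta)_J$ for $t\in\mathbf{T}$ (since $W_J\subset W_\theta$ by Lemma \ref{Wtheta}); (b) $u\cdot C(\theta)_J=C(\theta)_J$ for $u\in R_u(\mathbf{P}_J)$ (because $\mathbf{L}_J$ normalises $R_u(\mathbf{P}_J)$ and $\theta$ is trivial on $\mathbf{U}$); and (c) $\dot{s_i}C(\theta)_J=-C(\theta)_J$ for $i\in J$ (inherited from $s_iC_{w_J}=-C_{w_J}$).

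Applying $\phi$, the vector $v$ inherits (a)--(c) with $(\theta,J)$ replaced by $(\theta_1,K)$. By (a) and the weight computation, $v=\sum_{w\in\Omega}c_w\dot{w}C(\theta_2)_L$ with $\Omega=\{w\in Z_L:\theta_2^w=\theta_1\}$, and since $v\ne 0$ we have $\Omega\ne\emptyset$, hence $\theta_1\in W\theta_2$. I then use (b) to force $c_w=0$ for $w\ne e$ in $\Omega$: for such $w$ I would locate a root $\alpha$ among the roots of $R_u(\mathbf{P}_K)$ whose $w^{-1}$-image either is negative or lies in $\mathbf{U}\cap\mathbf{L}_L$, and then invoking the Chevalley commutator formula the action of the corresponding root subgroup on $\dot{w}C(\theta_2)_L$ produces basis contributions (in the sense of Proposition \ref{DesEJ}) outside the $\theta_1$-weight space, which forces $c_w=0$ by the linear independence in Proposition \ref{DesEJ}. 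Thus $v=c_e\,C(\theta_2)_L$, and reading off the $\mathbf{T}$-weight gives $\theta_1=\theta_2$. Finally, condition (b) now says that $R_u(\mathbf{P}_K)$ fixes $C(\theta_2)_L$; since the stabiliser of $C(\theta_2)_L$ inside $\mathbf{U}$ equals $R_u(\mathbf{P}_L)$ (any non-trivial element of $\mathbf{U}\cap\mathbf{L}_L$ yields a distinct basis vector of the $e$-block of Proposition \ref{DesEJ}), we obtain $R_u(\mathbf{P}_K)\subset R_u(\mathbf{P}_L)$, equivalently $L\subset K$. Applying the same argument to $\phi^{-1}$ gives $K\subset L$, hence $K=L$.

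The main obstacle is the step that $c_w=0$ for $w\ne e$ in $\Omega$: this demands carefully tracking, via the Chevalley commutator formula and Bruhat decomposition, how $\dot{w}^{-1}R_u(\mathbf{P}_K)\dot{w}$ acts on $C(\theta_2)_L$ and re-expressing the result in the basis of Proposition \ref{DesEJ}. Everything else — the weight-space computation, the properties (a)--(c), and the unipotent-stabiliser argument that finishes $K=L$ — is routine in comparison.
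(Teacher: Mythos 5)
Your overall strategy coincides with the paper's: identify the $\mathbf{T}$-eigenvectors of $E(\theta_2)_L$ as lying in $\sum_{w\in Z_L}\Bbbk\,\dot w C(\theta_2)_L$ (your weight-space computation is a careful version of the paper's assertion that all $\mathbf{T}$-invariant lines lie in $\Bbbk W C(\theta_2)_L$), and then pin down $\phi(C(\theta_1)_K)$ using distinguishing properties of the canonical generator. Your endgame is also fine: the stabiliser of $C(\theta_2)_L$ in $\mathbf{U}$ is exactly $R_u(\mathbf{P}_L)$, giving $L\subset K$ and, by symmetry, $K=L$.

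The genuine gap is in the step you yourself flag: property (b) alone cannot force $c_w=0$ for all $w\neq e$ in $\Omega$. Your criterion asks for a root $\alpha$ of $R_u(\mathbf{P}_K)$, i.e.\ $\alpha\in\Phi^+\setminus\Phi_K^+$, with $w^{-1}\alpha\in\Phi^-$ or $w^{-1}\alpha\in\Phi_L^+$. If $w\in W_K$ then $w^{-1}\alpha\in\Phi^-$ forces $\alpha\in\Phi_K^+$, and if moreover $w\Phi_L^+\subset\Phi_K^+$ (which holds whenever $L\subset K$) then $w^{-1}\alpha\in\Phi_L^+$ also forces $\alpha\in\Phi_K^+$; either way no admissible $\alpha$ exists. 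Concretely, take $\theta_1=\theta_2=\theta$, $L\subsetneq K\subset I(\theta)$ and $w=s_j\in Z_L$ with $j\in K\setminus L$ (type $A_2$, $K=\{1,2\}$, $L=\{1\}$, $w=s_2$): for every root $\alpha$ of $R_u(\mathbf{P}_K)$ one has $s_j\alpha\in\Phi^+\setminus\Phi_L^+$, so $R_u(\mathbf{P}_K)$ genuinely fixes $\dot s_j C(\theta)_L$ and your argument says nothing about $c_{s_j}$; indeed $C(\theta)_L-\dot s_j C(\theta)_L$ is fixed by $R_u(\mathbf{P}_K)$, has weight $\theta$, and is a $(-1)$-eigenvector for $\dot s_j$, so it satisfies (a), (b) and even part of (c) without being a multiple of $C(\theta)_L$. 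To close the gap you must use the full strength of the paper's two ``if and only if'' conditions --- the eigenvalue $-1$ for $\dot s_i$ for \emph{all} $i\in K$ together with $\mathbf{U}_{\alpha_i}$-invariance for all $i\notin K$ --- and compute $\dot s_i\dot w C(\theta_2)_L$ in the basis of Proposition \ref{DesEJ} (via the case analysis of Proposition \ref{MJ=KUW} or the $\tau_i$-operators); in the example above it is the condition $\dot s_1 v=-v$ that kills the unwanted term. (The paper's own proof also asserts the uniqueness of the element satisfying its conditions (1) and (2) without detailed verification, but it correctly identifies that both families of conditions are needed, whereas your reduction to $R_u(\mathbf{P}_K)$-invariance alone is provably insufficient.)
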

\begin{proof}
The ``if" part is clear. We prove the ``only if" part. Assume that there is an isomorphism $\phi: E(\theta_1)_K\rightarrow E(\theta_2)_L$. It is clear that $$t\phi(C(\theta_1)_K)=\theta_1(t)\phi(C(\theta_1)_K)$$ for any $t\in{\bf T}$. On the other hand, all ${\bf T}$-invariant lines in $E(\theta_2)_L$ are contained in $\Bbbk WC(\theta_2)_L$. Thus we get
\begin{equation}\label{isom}
\phi(C(\theta_1)_K)= \sum_{w\in Z_L} \lambda_w \dot{w}C(\theta_2)_L .
\end{equation}
 For any fixed $\theta\in \widehat{\bf T}$, $J\subset I(\theta)$ and $w\in Z_J$, we see that ${\bf U}_{\alpha}\dot{w} C(\theta)_J=\dot{w} C(\theta)_J$ if and only if ${\bf U}_{\alpha}  \nsubseteq  {\bf U}_{w_Jw^{-1}}$.
Therefore in (\ref{isom}), there is a unique $w\in Z_L$ such that $\lambda_w \ne 0$ and $w_K=ww_L$. However since $\dot{s_i}C(\theta_1)_K=-C(\theta_1)_K$ if and only if $i\in K$, $L$ has to be $K$ and then
$\phi(C(\theta_1)_K)= \lambda C(\theta_2)_K $  for some $\lambda \in \Bbbk$. Hence
$$ \theta_1(t)\phi(C(\theta_1)_K) =\phi(tC(\theta_1)_K)=t \phi(C(\theta_1)_K) = \theta_2(t)\phi(C(\theta_1)_K) $$
and we get $\theta_1= \theta_2$. This completes the proof.
\end{proof}

Analogous to \cite[Section 3]{CD1}, we have the following interpretation of $E(\theta)_J$ in terms of parabolic induction. Let $\theta\in\widehat{\bf T}$ and $K\subset I(\theta)$. Since $\theta|_{{\bf T}_i}$ is trivial for all $i\in K$, it induces a character (still denoted by $\theta$) of $\overline{\bf T}={\bf T}/{\bf T}\cap[{\bf L}_K,{\bf L}_K]$. Therefore, $\theta$ is regarded as a character of ${\bf L}_K$ by the homomorphism ${\bf L}_K\rightarrow\overline{\bf T}$ (with the kernel $[{\bf L}_K,{\bf L}_K]$), and hence as a character of ${\bf P}_K$ by letting ${\bf U}_K$ acts trivially. Set $\mathbb{M}(\theta, K):=\Bbbk{\bf G}\otimes_{\Bbbk{\bf P}_K}\theta$.  Let ${\bf 1}_{\theta, K}$ be a fixed nonzero element in the one-dimensional module $\Bbbk_\theta$ associated to $\theta$. We abbreviate $x{\bf 1}_{\theta, K}:=x\otimes{\bf 1}_{\theta, K} \in \mathbb{M}(\theta, K)$ as before. Let $J\subset I(\theta)$ and $J'=I(\theta)\backslash J$. Let $E(\theta)_J'$ be the submodule of $\mathbb{M}(\theta, J')$ generated by $D(\theta)_J:=\sum_{w\in W_J}(-1)^{\ell(w)}\dot{w}{\bf 1}_{\theta, J'}$.

\begin{Prop}\label{basis}
The set $\{u\dot{w}D(\theta)_J\mid w\in Z_J, u\in{\bf U}_{w_Jw^{-1}}\}$ forms a basis of $E(\theta)_J'$. In particular, $E(\theta)_J'$ is isomorphic to $E(\theta)_J$ as $\Bbbk {\bf G}$-modules.
\end{Prop}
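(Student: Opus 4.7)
The plan is to realise $E(\theta)_J'$ as an isomorphic image of $E(\theta)_J$ via a natural $\Bbbk{\bf G}$-homomorphism, and then to transport the basis from Proposition~\ref{DesEJ}. The first step is to define $\varphi:\mathbb{M}(\theta)\to\mathbb{M}(\theta,J')$ by ${\bf 1}_\theta\mapsto{\bf 1}_{\theta,J'}$; this is well-defined because the character of ${\bf P}_{J'}$ also denoted $\theta$ restricts to $\theta$ on ${\bf B}$. Since $\varphi(\eta(\theta)_J)=D(\theta)_J$, $\varphi$ restricts to a surjection $\mathbb{M}(\theta)_J\twoheadrightarrow E(\theta)_J'$.

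Next I would verify $\mathbb{M}(\theta)_J'\subseteq\ker\varphi$. For $J\subsetneq K\subseteq I(\theta)$, set $L=K\setminus J\subseteq J'$, which is nonempty. Decomposing $W_K=\bigsqcup_x xW_L$ into minimal-length coset representatives gives $\eta(\theta)_K=\sum_x(-1)^{\ell(x)}\dot x\,\eta(\theta)_L$. For $u\in W_L\subseteq W_{J'}$, a representative $\dot u\in{\bf G}_L\subseteq[{\bf L}_{J'},{\bf L}_{J'}]$ (the rank-one groups ${\bf G}_i$ being perfect) satisfies $\theta(\dot u)=1$, so $\dot u\,{\bf 1}_{\theta,J'}={\bf 1}_{\theta,J'}$. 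Hence $\varphi(\eta(\theta)_L)=\bigl(\sum_{u\in W_L}(-1)^{\ell(u)}\bigr){\bf 1}_{\theta,J'}=0$ as $W_L$ is nontrivial, and thus $\varphi(\eta(\theta)_K)=0$. So $\varphi$ descends to a surjection $\bar\varphi:E(\theta)_J\twoheadrightarrow E(\theta)_J'$.

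The heart of the argument is showing $\bar\varphi$ is injective, i.e., that the images $\{u\dot wD(\theta)_J\}$ of the basis of Proposition~\ref{DesEJ} are linearly independent in $\mathbb{M}(\theta,J')$. The tool is the Bruhat decomposition ${\bf G}=\bigsqcup_{x\in X_{J'}}{\bf U}_{x^{-1}}\dot x\,{\bf P}_{J'}$ (the ${\bf U}$-stabiliser of $\dot x\,{\bf P}_{J'}$ being the subgroup generated by ${\bf U}_\alpha$ with $\alpha\in\Phi^+$ and $x^{-1}\alpha\in\Phi^+$, which is complementary to ${\bf U}_{x^{-1}}$ thanks to $x\in X_{J'}$), yielding the $\Bbbk$-basis $\{u_2\dot x\,{\bf 1}_{\theta,J'}:x\in X_{J'},\,u_2\in{\bf U}_{x^{-1}}\}$ of $\mathbb{M}(\theta,J')$. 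The condition defining $Z_J$ translates to $ww_J\in X_{J'}$, and $w\mapsto ww_J$ injects $Z_J$ into $X_{J'}$. Expanding
\[
u\dot wD(\theta)_J=\sum_{u_0\in W_J}(-1)^{\ell(u_0)}u\,\dot{wu_0}\,{\bf 1}_{\theta,J'}
\]
(any torus correction $\dot w\dot u_0=\dot{wu_0}\cdot t$ being absorbed into the scalar $\theta(t)$), the $u_0=w_J$ summand yields the basis vector $u\dot{ww_J}\,{\bf 1}_{\theta,J'}$ in cell $ww_J$, using ${\bf U}_{w_Jw^{-1}}={\bf U}_{(ww_J)^{-1}}$, while $u_0\neq w_J$ produces contributions only to cells of length at most $\ell(wu_0)=\ell(w)+\ell(u_0)<\ell(ww_J)$.

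To finish, suppose $\sum_{w,u}c_{w,u}u\dot wD(\theta)_J=0$ with some $c_{w,u}\neq 0$, and pick $w_0$ in the support with $\ell(w_0w_J)$ maximal. Contributions to cell $w_0w_J$ come only from $u'\dot{w_0}D(\theta)_J$ terms: for $w'\neq w_0$ in the support we have $\ell(w'w_J)\leq\ell(w_0w_J)$; if strict, the length bound rules out cell $w_0w_J$, and if equal, the only top-length cell reached by that term is $w'w_J\neq w_0w_J$ by injectivity of $w\mapsto ww_J$. Within $u'\dot{w_0}D(\theta)_J$ only the $u_0=w_J$ summand lands in cell $w_0w_J$, so the vanishing cell-$w_0w_J$ component reads $\pm\sum_{u'}c_{w_0,u'}u'\dot{w_0w_J}\,{\bf 1}_{\theta,J'}=0$. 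Linear independence of the cell-$w_0w_J$ basis then forces $c_{w_0,u'}=0$ for all $u'$, contradicting the choice of $w_0$; hence $\bar\varphi$ is an isomorphism and the claimed basis follows. The main obstacle is precisely this length-bookkeeping step, where one must carefully combine the bound $\ell(x')\leq\ell(wu_0)$ with the injectivity of $w\mapsto ww_J$ on $Z_J$.
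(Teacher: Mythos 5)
Your proof is correct, and it takes a genuinely different route from the paper's. The paper re-runs the spanning argument entirely inside $\mathbb{M}(\theta,J')$: it first gets $E(\theta)_J'=\sum_{w\in W}\Bbbk{\bf U}\dot wD(\theta)_J$ via the $N_{\bf G}({\bf T})$-stability trick of Proposition~\ref{MJ=KUW}, restricts the index to $X_J$ using $\dot{s_i}D(\theta)_J=-D(\theta)_J$ for $i\in J$, and then restricts to $Z_J$ by observing that $C_{w_K}{\bf 1}_{\theta,J'}=0$ whenever $J\subsetneq K\subset I(\theta)$ and invoking Lemma~\ref{KL-basis}; the final refinement to ${\bf U}_{w_Jw^{-1}}$ and the isomorphism with $E(\theta)_J$ are asserted ``by the same argument as Proposition~\ref{DesEJ}.'' You instead construct the canonical $\Bbbk{\bf G}$-projection $\varphi:\mathbb{M}(\theta)\to\mathbb{M}(\theta,J')$ coming from ${\bf B}\subset{\bf P}_{J'}$, show it kills $\mathbb{M}(\theta)_J'$ via the coset decomposition $\eta(\theta)_K=\sum_x(-1)^{\ell(x)}\dot x\,\eta(\theta)_L$ with $L=K\setminus J$ and the vanishing $\sum_{u\in W_L}(-1)^{\ell(u)}=0$, and then prove the induced surjection $E(\theta)_J\twoheadrightarrow E(\theta)_J'$ is injective by a careful length-bookkeeping argument in the parabolic Bruhat basis $\{u_2\dot x{\bf 1}_{\theta,J'}:x\in X_{J'},u_2\in{\bf U}_{x^{-1}}\}$. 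Both arguments ultimately rest on the same two ingredients (the vanishing of the ``higher'' $D(\theta)_K$ in $\mathbb{M}(\theta,J')$, and the Bruhat decomposition), but your approach buys something real: it makes the isomorphism $E(\theta)_J\cong E(\theta)_J'$ an explicit map from the start, so that linear independence of the claimed basis becomes exactly injectivity of a concrete $\Bbbk{\bf G}$-homomorphism; this fills in the terse final step of the paper's proof rather than re-deriving the spanning set from scratch.
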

\begin{proof}
Using Lemma \ref{MJ=KUW}, we have
$$E(\theta)_J'=\Bbbk {\bf G}D(\theta)_J = \sum_{w\in W}\Bbbk {\bf U} \dot{w} D(\theta)_J.$$
Since $\dot{s_i}D(\theta)_J=-D(\theta)_J$ for any $i\in J$, then $E(\theta)_J'=\sum_{w\in X_J}\Bbbk {\bf U} \dot{w} D(\theta)_J.$
Since $D(\theta)_K=(-1)^{\ell(w_K)}C_{w_K} {\bf 1}_{\theta, J'}$ for each $J \subset K\subset I(\theta)$, and $C_{w_K} {\bf 1}_{\theta, J'}=0$ for $J\subsetneq K\subset I(\theta)$,
we have $E(\theta)_J'=\sum_{w\in Z_J}\Bbbk {\bf U} \dot{w} D(\theta)_J$ by Lemma \ref{KL-basis}.
By the same argument as Proposition \ref{DesEJ}, we have $$E(\theta)_J'=\sum_{w\in Z_J}\Bbbk {\bf U}_{w_Jw^{-1}} \dot{w} D(\theta)_J,$$ and hence $E(\theta)_J'$ is isomorphic to $E(\theta)_J$ by Lemma \ref{DesEJ}.
\end{proof}

\section{The cross characteristic}

Throughout this section, we assume that $\op{char}\Bbbk\neq\op{char}\bar{\mathbb{F}}_q=p$. The main result of this section is the following
\begin{theorem}\label{main}
Let $\theta\in\widehat{\bf T}$. Then all $\Bbbk {\bf G}$-modules $E(\theta)_J$ $(J\subset I(\theta))$ are irreducible and pairwise non-isomorphic. In particular, the $\Bbbk {\bf G}$-module $\mathbb{M}(\theta)$ has exactly $2^{|I(\theta)|}$ composition factors, each occurring with  multiplicity one.
\end{theorem}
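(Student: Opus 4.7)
Pairwise non-isomorphism is already Proposition~\ref{EJ}. For the composition factor count, I would first observe that ordering the subsets of $I(\theta)$ by any linear extension of reverse inclusion turns the family $\{\mathbb{M}(\theta)_K\}_{K\subset I(\theta)}$ into a chain of submodules of $\mathbb{M}(\theta)=\mathbb{M}(\theta)_\emptyset$ whose successive quotients are precisely the $E(\theta)_J$, each appearing once. Consequently, if every $E(\theta)_J$ is irreducible, then $\mathbb{M}(\theta)$ has exactly $2^{|I(\theta)|}$ composition factors, each with multiplicity one. Thus the whole theorem reduces to proving that each $E(\theta)_J$ is irreducible.

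For irreducibility I will carry out the three-step program announced in the introduction. Let $N$ be a nonzero $\Bbbk{\bf G}$-submodule of $E(\theta)_J$. \emph{Step~1.} By Proposition~\ref{DesEJ}, $E(\theta)_J=\Bbbk{\bf G}C(\theta)_J$, so it suffices to show $C(\theta)_J\in N$. \emph{Step~2 (production of a $U_{q^a}$-fixed vector in $N$).} Pick $0\neq v\in N$ and expand it in the basis of Proposition~\ref{DesEJ}. Choose $a$ large enough that every $u$ and every representative $\dot w$ appearing in this expansion lies in ${\bf G}_{q^a}$. Using the factorization ${\bf U}={\bf U}_{w_Jw^{-1}}\cdot{\bf U}'_{w_Jw^{-1}}$ together with the observation that $\dot w^{-1}({\bf U}'_{w_Jw^{-1}})\dot w$ lies in a unipotent subgroup whose action on $C(\theta)_J$ is controlled by the straightening identities in the proof of Proposition~\ref{MJ=KUW}, one checks that the $\Bbbk U_{q^a}$-span of $v$ is finite dimensional. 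Since $|U_{q^a}|$ is a power of $p$ and $p\neq\op{char}\Bbbk$, averaging against the idempotent $|U_{q^a}|^{-1}\underline{U_{q^a}}$ produces a nonzero $U_{q^a}$-fixed element $v'\in N$.

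\emph{Step~3 (moving $v'$ onto $C(\theta)_J$).} This is the step where the new technique hinted at in the introduction is required, and I expect it to be the main obstacle. The plan is first to describe $E(\theta)_J^{U_{q^a}}$ explicitly: using Proposition~\ref{DesEJ}, any $U_{q^a}$-fixed vector must be a $\Bbbk$-linear combination of averaged vectors of the shape $\underline{({\bf U}_{w_Jw^{-1}}\cap U_{q^a})}\,\dot w\,C(\theta)_J$ for $w\in Z_J$, possibly twisted by characters of ${\bf T}_{q^a}$. I would then run a downward induction on the Bruhat order on $Z_J$: using the action of ${\bf T}$ and of suitable Weyl-group representatives, together with straightening identities of the type \eqref{pf2.5-2} taken from the proof of Proposition~\ref{MJ=KUW}, one progressively cancels the top $Z_J$-component of $v'$ against lower components until only a nonzero scalar multiple of $\underline{U_{q^a}}\,C(\theta)_J$ remains. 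A final Hecke-style averaging (again valid because $p$ is invertible in $\Bbbk$) sends this vector to $C(\theta)_J$, forcing $N=E(\theta)_J$. The delicate point is to control the interplay between the ${\bf T}_{q^a}$-character decomposition of $E(\theta)_J^{U_{q^a}}$ and the refined Bruhat stratification given by $Z_J$; once this is in hand, all remaining ingredients follow from the formal results of Section~1.
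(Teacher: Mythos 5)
Your high-level three-step outline matches the paper's strategy, and the reduction of the theorem to irreducibility of each $E(\theta)_J$ (together with Proposition~\ref{EJ} for non-isomorphism) is fine. But there are two genuine gaps in the way you propose to carry out Steps~2 and~3.

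\textbf{Step 2: averaging does not obviously give a nonzero fixed vector.} You take $v\in N$, observe that $\Bbbk U_{q^a}v$ is finite dimensional, and then apply the idempotent $|U_{q^a}|^{-1}\underline{U_{q^a}}$, asserting this yields a \emph{nonzero} $U_{q^a}$-fixed vector. That assertion is not automatic: in cross characteristic the invariants $V^{U_{q^a}}$ of a nonzero $\Bbbk U_{q^a}$-module $V$ can perfectly well vanish (a nontrivial linear character of $\mathbb{Z}/p$ already shows this). Even $G_{q^a}$-irreducibles need not have nonzero $U_{q^a}$-fixed vectors in general — cuspidal modules do not. The paper (Lemma~\ref{claim1}) gets around this by choosing an irreducible $\Bbbk G_{q^a}$-\emph{submodule} $L$ of $\Bbbk G_{q^a}x$, which sits inside the finite principal series $\mathbb{M}(\theta,K)_{q^a}$, and then a Frobenius-reciprocity argument on the dual shows $(L^*)^{U_{q^a}}\neq0$, whence $L^{U_{q^a}}\neq0$ by semisimplicity of the $U_{q^a}$-action. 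That extra structural input about principal series is essential and is missing in your proposal.

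\textbf{Step 3 is not just "Hecke-style averaging."} Once a $U_{q^a}$-fixed vector is in $N$, what remains is considerably more delicate than cancelling Bruhat components top-down. The paper breaks it into three nontrivial lemmas. First, Lemma~\ref{CDY-lemma} shows by a careful inductive argument over a reduced expression of $w_0$ (using the invertibility of $q$) that if $\underline{U_{q^a}}\eta\in M'$ for a weight vector $\eta$, then already $\eta\in M'$; this is how the $\underline{U_{q^a}}$ factor is stripped off, and it is nothing like a single averaging. Second, Lemma~\ref{claim1} combined with Lemma~\ref{eigenvector} lands one in $\Bbbk W_\theta D(\theta)_J\cap M$, but this is still not enough, because $W_\theta$ may be strictly bigger than $W_{I(\theta)}$. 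Lemma~\ref{claim2} then introduces the operators $\Omega_i$ built from $\tau_i$ to push from $W_\theta$ down to $W_{I(\theta)}$ by an induction on the ``support'' $|N(\varepsilon)|$; this reduction is precisely where your ``progressively cancel'' would need substance, and it is the new technical idea here. Finally Lemma~\ref{claim3} finishes as in \cite{CD1}. Also note that the argument throughout works with the parabolic realization $E(\theta)_J'\cong E(\theta)_J$ from Proposition~\ref{basis}, because the $\tau_i$ and $\Omega_i$ operators act cleanly on $\dot{w}D(\theta)_J$; working directly with $C(\theta)_J$ as you propose would complicate the bookkeeping without any gain.

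In short: the skeleton is right, but your Step 2 needs the principal-series structural argument (not bare averaging), and your Step 3 collapses three substantial lemmas (the $\underline{U_{q^a}}$-removal lemma and the two $\tau_i$/$\Omega_i$ reductions) into a sentence that, as written, does not constitute a proof.
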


For each $i\in I$, we fix an $u_i\in {\bf U}_{\alpha_i}\backslash\{1\}$. Similar to \cite[Section 3]{CD1}, define
$$\tau_i:={u_i}^{-1}\dot{s_i}^{-1}(f_i(u_i)-1)\in\Bbbk{\bf G},$$ where $f_i(u_i)$ is defined by the formula (\ref{sus}). Therefore the operators $\tau_i$ $(i\in I)$ also have the following properties (Lemma \ref{tau}, Corollary \ref{tau2}, Corollary \ref{tau3} below), the proofs of which are identical to \cite[Lemma 3.3]{CD1}, \cite[Corollary 3.5]{CD1}, \cite[Corollary 3.6]{CD1}, respectively, as long as we replace $D_J$ there with $D(\theta)_J$.
\begin{lemma}\label{tau}
For each $i\in I$, fix a $u_i\in {\bf U}_{\alpha_i}\backslash\{1\}$. Let $w\in Z_J$, then we have
$$
\tau_i\dot{w}D(\theta)_J=\left\{
\begin{array}{ll}
 a_i\dot{w}D(\theta)_J-\dot{s_i}^{-1}\dot{w}D(\theta)_J &\ \mbox{if}~s_iw\leq w\\
b_i\dot{w}D(\theta)_J &\ \mbox{if}~s_iww_J<ww_J~\mbox{and}~s_iw\geq w\\
 0 &\ \mbox{if}~s_iww_J\geq ww_J
\end{array}\right.
$$
where $a_i=(\theta^w(\dot{s_i}h_i(u_i)\dot{s_i}))^{-1}$, and $b_i\in \Bbbk$ which depends on the choice of the representative of each element $w\in W$.
\end{lemma}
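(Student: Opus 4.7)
The plan is to expand $\tau_i\dot{w}D(\theta)_J$ directly and specialize to the three cases already analyzed inside the proof of Proposition \ref{MJ=KUW}. Reading (\ref{sus}) as $f_i(u_i)=\dot{s_i}u_i\dot{s_i}^{-1}g_i(u_i)^{-1}h_i(u_i)^{-1}\dot{s_i}^{-1}$ yields the identity
\begin{equation*}
u_i^{-1}\dot{s_i}^{-1}f_i(u_i)\;=\;\dot{s_i}^{-1}g_i(u_i)^{-1}h_i(u_i)^{-1}\dot{s_i}^{-1},
\end{equation*}
so that
\begin{equation*}
\tau_i\dot{w}D(\theta)_J\;=\;\dot{s_i}^{-1}g_i(u_i)^{-1}h_i(u_i)^{-1}\dot{s_i}^{-1}\dot{w}D(\theta)_J\;-\;u_i^{-1}\dot{s_i}^{-1}\dot{w}D(\theta)_J.
\end{equation*}
The three geometric positions in the hypothesis of the lemma correspond, in order, to Cases (2), (3), (1) of the proof of Proposition \ref{MJ=KUW}. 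Since $D(\theta)_J\in\mathbb{M}(\theta,J')$ obeys the same formal identities as $\eta(\theta)_J$---namely $\dot{s_j}D(\theta)_J=-D(\theta)_J$ for $j\in J$, and ${\bf U}_\alpha D(\theta)_J=D(\theta)_J$ for every positive root $\alpha$ whose associated reflection is not in $W_J$---those computations transcribe verbatim to $D(\theta)_J$.

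Applying this case by case: when $s_iww_J\ge ww_J$, the Case~(1) computation in Proposition \ref{MJ=KUW} gives $u_i\dot{w}D(\theta)_J=\dot{w}D(\theta)_J$, and (\ref{sus}) propagates the invariance to $f_i(u_i)$, so $(f_i(u_i)-1)\dot{w}D(\theta)_J=0$ and $\tau_i\dot{w}D(\theta)_J=0$. When $s_iw\le w$, Case~(2) produces
\begin{equation*}
f_i(u_i)\dot{w}D(\theta)_J\;=\;\theta^w(\dot{s_i}h_i(u_i)\dot{s_i})^{-1}\,\dot{s_i}u_i\dot{s_i}^{-1}\dot{w}D(\theta)_J,
\end{equation*}
the $\theta^w$-factor arising by moving $h_i(u_i)\in{\bf T}_i$ past $\dot{w}$ through the ${\bf T}$-action on $D(\theta)_J$; multiplying on the left by $u_i^{-1}\dot{s_i}^{-1}$ gives the asserted $a_i\dot{w}D(\theta)_J-\dot{s_i}^{-1}\dot{w}D(\theta)_J$ with $a_i=(\theta^w(\dot{s_i}h_i(u_i)\dot{s_i}))^{-1}$. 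The subtracted term is unchanged because $w\in X_J$ together with $s_iw<w$ force $-w^{-1}\alpha_i$ to be a positive root not lying among the roots of $W_J$, so $u_i^{-1}$ fixes $\dot{s_i}^{-1}\dot{w}D(\theta)_J$. Finally, when $s_iw\ge w$ and $s_iww_J<ww_J$, one has $s_iw=ws_j$ for some $j\in J$; Case~(3) transports $u_i$ across $\dot{w}$ via (\ref{sus}) and exploits $\dot{s_j}D(\theta)_J=-D(\theta)_J$ to give $(f_i(u_i)-1)\dot{w}D(\theta)_J=c\,\dot{w}D(\theta)_J$ for some $c\in\Bbbk$, whence $\tau_i\dot{w}D(\theta)_J=b_i\dot{w}D(\theta)_J$ with $b_i$ depending on the chosen representative of $w$.

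The main obstacle is the careful bookkeeping of torus discrepancies---between $\dot{s_i}\dot{w}$ and $\dot{s_iw}$, between $\dot{s_i}\dot{s_j}t$ and $\dot{s_i}\dot{w}$, and similar---and the precise extraction of the ${\bf T}$-scalar $\theta^w(\dot{s_i}h_i(u_i)\dot{s_i})$ in Case~1. The authors explicitly note that the proof is identical to \cite[Lemma~3.3]{CD1} after substituting $D_J$ by $D(\theta)_J$, signalling that no new conceptual ingredient is needed beyond systematically tracking the character $\theta^w$ each time a torus element crosses the basis vector $\dot{w}D(\theta)_J$.
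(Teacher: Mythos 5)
Your overall strategy---expand $\tau_i=u_i^{-1}\dot{s_i}^{-1}(f_i(u_i)-1)$ and feed in the three case computations from the proof of Proposition \ref{MJ=KUW}, which transfer verbatim from $\eta(\theta)_J$ to $D(\theta)_J$---is exactly the intended one (the paper simply defers to \cite[Lemma 3.3]{CD1}, whose proof is this case analysis). The case $s_iww_J\ge ww_J$ is fine. However, the two remaining cases contain slips in the intermediate identities, and one of them is a genuine error rather than a misprint. In the case $s_iw\le w$, the identity (\ref{pf2.5-1}) reads $\dot{s_i}u_i\dot{w}D(\theta)_J=\theta^w(\dot{s_i}h_i(u_i)\dot{s_i})\,f_i(u_i)\dot{w}D(\theta)_J$, i.e.\ $f_i(u_i)\dot{w}D(\theta)_J=a_i\,\dot{s_i}u_i\dot{w}D(\theta)_J$, \emph{without} the extra $\dot{s_i}^{-1}$ you inserted. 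With your version, left multiplication by $u_i^{-1}\dot{s_i}^{-1}$ yields $a_i\dot{s_i}^{-1}\dot{w}D(\theta)_J$ rather than $a_i\dot{w}D(\theta)_J$; with the correct version the prefix $\dot{s_i}u_i$ cancels against $u_i^{-1}\dot{s_i}^{-1}$ and gives $a_i\dot{w}D(\theta)_J$ as asserted. (Your treatment of the subtracted term $-u_i^{-1}\dot{s_i}^{-1}\dot{w}D(\theta)_J$, via $-w^{-1}\alpha_i\in\Phi^+\setminus\Phi_J^+$, is correct.)

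The real gap is in the case $s_iw\ge w$, $s_iww_J<ww_J$: the claim $(f_i(u_i)-1)\dot{w}D(\theta)_J=c\,\dot{w}D(\theta)_J$ is false. Here $w^{-1}\alpha_i=\alpha_j$ with $j\in J$, so $f_i(u_i)\dot{w}D(\theta)_J=\dot{w}u'D(\theta)_J$ with $1\ne u'\in{\bf U}_{\alpha_j}$, and ${\bf U}_{\alpha_j}$ does not fix $D(\theta)_J$ for $j\in J$. Already for $SL_2$ with $J=I$ one has $(f_i(u_i)-1)\eta(\theta)_J=\dot{s}{\bf 1}_\theta-f_i(u_i)\dot{s}{\bf 1}_\theta$, which is not proportional to ${\bf 1}_\theta-\dot{s}{\bf 1}_\theta$. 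Moreover, even granting that claim, the deduction ``whence $\tau_i\dot{w}D(\theta)_J=b_i\dot{w}D(\theta)_J$'' would still require $u_i^{-1}\dot{s_i}^{-1}\dot{w}D(\theta)_J$ to be proportional to $\dot{w}D(\theta)_J$, which fails for the same reason. The correct cancellation runs the other way: (\ref{pf2.5-2}) gives $(f_i(u_i)-1)\dot{w}D(\theta)_J=\theta(t)^{-1}\dot{s_i}u_i\dot{w}D(\theta)_J$ where $\dot{s_i}\dot{w}=\dot{w}\dot{s_j}t$, and hence $\tau_i\dot{w}D(\theta)_J=\theta(t)^{-1}u_i^{-1}\dot{s_i}^{-1}\dot{s_i}u_i\dot{w}D(\theta)_J=\theta(t)^{-1}\dot{w}D(\theta)_J$, so $b_i=\theta(t)^{-1}$, visibly dependent on the chosen representatives. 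In all three nonzero cases the mechanism is that the Proposition rewrites the relevant expression as a scalar times something beginning with $\dot{s_i}u_i$, which is then annihilated by the prefix $u_i^{-1}\dot{s_i}^{-1}$ of $\tau_i$; your write-up inverts this cancellation in the third case.
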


\begin{Cor}\label{tau2}
Let $j_1,\cdots,j_k\in I$. If the coefficient $($in terms of the basis given in Proposition \ref{basis}$)$ of $\dot{w_1}D(\theta)_J$ in $\tau_{j_k}\cdots\tau_{j_1}\dot{w_2}D(\theta)_J$ is nonzero, then $w_1=w_2$, or there exists a $1\leq t\leq k$ and a subset $\{i(1),i(2),\cdots,i(t)\}$ of $\{1,2,\cdots,k\}$ such that $(\op{i})$ $i(1)<i(2)<\cdots<i(t)$, $(\op{ii})$ $\ell(w_1)=\ell(w_2)-t$, and $(\op{iii})$ $w_1=s_{j_{i(t)}}\cdots s_{j_{i(1)}}w_2$.
\end{Cor}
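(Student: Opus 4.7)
The plan is to proceed by induction on $k$. The base case $k = 0$ is immediate from Proposition \ref{basis}: the element $\dot{w_2}D(\theta)_J$ is itself the $u = 1$ basis vector indexed by $w_2$, so its coefficient at $\dot{w_1}D(\theta)_J$ is nonzero exactly when $w_1 = w_2$ (and the conclusion holds with $t = 0$, i.e.\ the first alternative of the corollary).

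For the inductive step, I would write $\tau_{j_k}\cdots \tau_{j_1}\dot{w_2}D(\theta)_J = \tau_{j_k}\bigl(\tau_{j_{k-1}}\cdots \tau_{j_1}\dot{w_2}D(\theta)_J\bigr)$ and apply the inductive hypothesis to the inner expression: the only basis vectors $\dot{v}D(\theta)_J$ appearing with nonzero coefficient satisfy either $v = w_2$ or $v = s_{j_{i(s)}}\cdots s_{j_{i(1)}}w_2$ for some $1 \leq i(1) < \cdots < i(s) \leq k-1$ with $\ell(v) = \ell(w_2) - s$. For each such $v$, Lemma \ref{tau} evaluates $\tau_{j_k}\dot{v}D(\theta)_J$: it is either a scalar multiple of $\dot{v}D(\theta)_J$ alone (cases~2 and~3 of the lemma, or the $a_i$-contribution in case~1), or it contributes in addition a nonzero scalar multiple of $\dot{s_{j_k}}^{-1}\dot{v}D(\theta)_J$ with $\ell(s_{j_k}v) = \ell(v) - 1$ (the $-\dot{s_i}^{-1}\dot{w}D(\theta)_J$-term in case~1), which after absorbing the implicit torus factor is a scalar multiple of $\dot{s_{j_k}v}D(\theta)_J$ via the character $\theta$.

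Consequently, a nonzero coefficient of $\dot{w_1}D(\theta)_J$ in $\tau_{j_k}\cdots \tau_{j_1}\dot{w_2}D(\theta)_J$ arises in exactly one of two ways: either $w_1 = v$ for some contributing $v$, in which case the index set $i(1) < \cdots < i(s)$ furnished by the inductive hypothesis already satisfies the conclusion (with $t = s$, possibly $t = 0$); or $w_1 = s_{j_k}v$ with $\ell(w_1) = \ell(v) - 1 = \ell(w_2) - (s+1)$, in which case appending $k$ as $i(s+1)$ produces an increasing subset of $\{1,\ldots,k\}$ of size $t = s+1$ with $w_1 = s_{j_{i(t)}}\cdots s_{j_{i(1)}}w_2$ and $\ell(w_1) = \ell(w_2) - t$. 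Both cases yield the required conclusion, closing the induction.

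The main obstacle I anticipate is verifying that the ``replace'' branch in case~1 of Lemma \ref{tau} keeps us inside the basis of Proposition \ref{basis}: concretely, that whenever $v \in Z_J$ and $s_{j_k}v < v$, one still has $s_{j_k}v \in Z_J$, so that $\dot{s_{j_k}v}D(\theta)_J$ is a genuine ($u = 1$) basis element and no hidden re-expansion secretly contaminates the coefficients of other $\dot{w}D(\theta)_J$. The $X_J$-condition is stable under such left descents by a short argument comparing $\ell(vs_j)$ and $\ell(s_{j_k}vs_j)$ for $j \in J$, but the right-descent condition $\mathscr{R}(vw_J) \subset J \cup (I \setminus I(\theta))$ requires a direct combinatorial check of how $\mathscr{R}(vw_J)$ transforms under left multiplication by $s_{j_k}$. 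Once this combinatorial compatibility is established, the remainder of the induction is mechanical bookkeeping of which $\tau_{j_m}$ performed a replacement.
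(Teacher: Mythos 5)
Your induction is exactly the intended route: the paper defers the proof to \cite[Corollary 3.5]{CD1}, which is this same step-by-step analysis of $\tau_{j_m}$ via Lemma~\ref{tau}. The one combinatorial point you flag as a gap does hold, and it is the only thing needed to ensure no re-expansion in the basis occurs. Concretely, if $s_i u < u$ then $\mathscr{R}(s_i u) \subset \mathscr{R}(u)$: writing $v = s_i u$, for $j \in \mathscr{R}(v)$ one has $\ell(v s_j) = \ell(v) - 1$, so $\ell(u s_j) = \ell(s_i v s_j) \in \{\ell(v) - 2,\ \ell(v)\}$, while also $\ell(u s_j) \in \{\ell(u) - 1,\ \ell(u) + 1\} = \{\ell(v),\ \ell(v) + 2\}$; the only value in both sets is $\ell(v) = \ell(u) - 1$, so $j \in \mathscr{R}(u)$. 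Now apply this with $u = w w_J$: your observation that $s_i w \in X_J$ shows $\ell(s_i w w_J) = \ell(s_i w) + \ell(w_J) = \ell(w w_J) - 1$, so the fact gives $\mathscr{R}(s_i w w_J) \subset \mathscr{R}(w w_J) \subset J \cup (I\setminus I(\theta))$, hence $s_i w \in Z_J$. Thus in case~1 of Lemma~\ref{tau} the term $-\dot{s_i}^{-1}\dot{w}D(\theta)_J$ is already a nonzero scalar multiple (via $\theta$ applied to the torus discrepancy between $\dot{s_i}^{-1}\dot{w}$ and the chosen representative $\dot{s_i w}$) of the genuine basis vector $\dot{s_i w}D(\theta)_J$, and the bookkeeping in your inductive step is complete as written.
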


\noindent As an easy consequence of Corollary \ref{tau2}, we have
\begin{Cor}\label{tau3}
Let $j_1,\cdots,j_k\in I$. Then

\noindent $(1)$ The coefficient of $\dot{w_1}D(\theta)_J$ in $\tau_{j_k}\cdots\tau_{j_1}\dot{w_2}D(\theta)_J$ is zero if $\ell(w_2)-\ell(w_1)>k$.

\noindent $(2)$ If $\ell(w_2)-\ell(w_1)=k$, then the coefficient of $\dot{w_1}D(\theta)_J$ in $\tau_{j_k}\cdots\tau_{j_1}\dot{w_2}D(\theta)_J$ is nonzero if and only if $w_1=s_{j_k}\cdots s_{j_1}w_2$.
\end{Cor}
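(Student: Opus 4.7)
Part (1) is immediate from Corollary \ref{tau2}: the conclusion of that corollary forces a nonzero coefficient to satisfy either $w_1=w_2$ (length difference $0$) or $\ell(w_1)=\ell(w_2)-t$ for some $1\le t\le k$. Under the hypothesis $\ell(w_2)-\ell(w_1)>k$ neither possibility can hold, so the coefficient must vanish. The ``only if'' direction of part (2) is equally immediate: when $\ell(w_2)-\ell(w_1)=k\ge 1$ the alternative $w_1=w_2$ is excluded, and the integer $t$ produced by Corollary \ref{tau2} must equal $k$, so the subset $\{i(1),\ldots,i(t)\}$ is forced to be all of $\{1,\ldots,k\}$, giving $w_1=s_{j_k}\cdots s_{j_1}w_2$.

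The substance lies in the converse of part (2), which I would establish by induction on $k$. The case $k=0$ is trivial. For the inductive step, the hypotheses $w_1=s_{j_k}\cdots s_{j_1}w_2$ and $\ell(w_2)-\ell(w_1)=k$ force each intermediate partial product $s_{j_i}\cdots s_{j_1}w_2$ to drop length by exactly one: any length change under left multiplication by a simple reflection is $\pm 1$, and the cumulative drop of $k$ over $k$ steps leaves no slack. In particular $s_{j_1}w_2<w_2$, which places us in the first case of Lemma \ref{tau} and yields
$$\tau_{j_1}\dot{w_2}D(\theta)_J \;=\; a_{j_1}\dot{w_2}D(\theta)_J \;-\; \dot{s_{j_1}}^{-1}\dot{w_2}D(\theta)_J.$$
I then apply $\tau_{j_k}\cdots\tau_{j_2}$ to both summands. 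The first summand contributes $0$ to the coefficient of $\dot{w_1}D(\theta)_J$ by part (1), which has just been established, since $\ell(w_2)-\ell(w_1)=k>k-1$. For the second summand, set $w'=s_{j_1}w_2$; then $\dot{s_{j_1}}^{-1}\dot{w_2}=t\dot{w'}$ for some $t\in {\bf T}$, and commuting $t$ past $\dot{w'}$ onto $D(\theta)_J$ exhibits $\dot{s_{j_1}}^{-1}\dot{w_2}D(\theta)_J$ as a nonzero scalar multiple of $\dot{w'}D(\theta)_J$. The inductive hypothesis applied to the data $(w',w_1,j_2,\ldots,j_k)$, for which $w_1=s_{j_k}\cdots s_{j_2}w'$ and $\ell(w')-\ell(w_1)=k-1$, then delivers the desired nonzero coefficient.

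The only technical wrinkle I anticipate is the torus twist relating $\dot{s_{j_1}}^{-1}\dot{w_2}$ to $\dot{w'}$, but since $\theta$ takes values in $\Bbbk^\times$ the scalar produced is nonzero and the induction proceeds unobstructed. This matches the paper's characterization of the result as an easy consequence of Corollary \ref{tau2}, with all the real combinatorial work already encoded in Lemma \ref{tau}.
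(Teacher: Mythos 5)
Your proposal is correct and follows the route the paper intends: part (1) and the ``only if'' half of (2) are read off directly from Corollary \ref{tau2}, and the ``if'' half is the induction on $k$ that isolates the unique maximal length-drop path via the first case of Lemma \ref{tau}, using the freshly established part (1) to kill the diagonal term $a_{j_1}\dot{w_2}D(\theta)_J$. The one step you pass over silently is that the intermediate element $w'=s_{j_1}w_2$ must again lie in $Z_J$, so that $\dot{w'}D(\theta)_J$ is a basis element to which Lemma \ref{tau} and the inductive hypothesis apply; this is true because $Z_J$ is closed under length-decreasing left multiplication by simple reflections: if $w\in Z_J$ and $s_iw<w$ then $s_iw\in X_J$ by the standard descent argument, and a new right descent $m\in I(\theta)\setminus J$ of $s_iww_J$ would force $ww_J(\alpha_m)=\alpha_i$, hence $\ell(s_iww_J)=\ell(ww_J)+1$, contradicting $\ell(s_iww_J)=\ell(s_iw)+\ell(w_J)=\ell(ww_J)-1$. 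With that observation supplied, your induction closes, and the nonvanishing of the scalars involved ($a_{j_1}$ and the torus twist relating $\dot{s_{j_1}}^{-1}\dot{w_2}$ to $\dot{w'}$ are values of the character $\theta$, hence invertible) is handled exactly as you say.
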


Let $\theta$ be a one-dimensional character of $\bf T$, and $V$ be a $\Bbbk{\bf G}$-module which can also be regarded as a $\Bbbk{\bf T}$-module. Thus we denote by $$V_\theta=\{v\in V \mid tv=\theta(t)v, ~\forall t \in{\bf T}\}.$$
By restriction, one regards $\theta$ a character of $T_{q^a}$ and $V$ a $\Bbbk G_{q^a}$-module for any positive integer $a$. We denote by
$$V_{\theta,q^a}=\{v\in V \mid tv=\theta(t)v,\forall t\in T_{q^a}\}.$$ The following lemma is easy but useful in the main proof of this section.

\begin{lemma}\label{eigenvector}
Let $M$ be a $\Bbbk {\bf G}$-module and $N$ be a submodule of $M$. Assume that $\chi_1,\chi_2,\dots,\chi_m$ are different characters of ${\bf T}$ $($resp.~$T_{q^b}$ for some positive integer $b$$)$. If $\displaystyle \sum_{i=1}^m a_i\xi_i\in N$ with each $a_i\ne 0$ and $\xi_i\in M_{\chi_i}$ $($resp.~$\xi_i\in M_{\chi_i,q^b}$$)$, then $\xi_i\in N$ for $1\leq i\leq m$.
\end{lemma}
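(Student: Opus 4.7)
The plan is to prove the statement by induction on $m$, the number of distinct characters appearing in the decomposition. The base case $m=1$ is immediate: if $a_1\xi_1\in N$ with $a_1\in\Bbbk^\times$, then $\xi_1=a_1^{-1}(a_1\xi_1)\in N$. The only real content is in the inductive step, and the proof will be uniform for both the case of ${\bf T}$ and the case of the finite torus $T_{q^a}$, since only the fact that the characters take values in $\Bbbk^\times$ is used.

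For the inductive step, the key idea is to use the action of a single well-chosen element of the group algebra $\Bbbk{\bf T}$ (resp.~$\Bbbk T_{q^a}$) to peel off one eigenvector at a time. Since $\chi_1$ and $\chi_m$ are distinct characters, I pick $t\in{\bf T}$ (resp.~$T_{q^a}$) with $\chi_1(t)\neq\chi_m(t)$; such a $t$ exists directly from the definition of equality of characters. Then
\[
\zeta:=(t-\chi_m(t)\cdot 1)\cdot\sum_{i=1}^m a_i\xi_i=\sum_{i=1}^{m-1}a_i(\chi_i(t)-\chi_m(t))\xi_i\in N,
\]
where the $\xi_m$-term has been killed and the $\xi_1$-term retains a nonzero coefficient. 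Discarding those indices $i<m$ for which $\chi_i(t)=\chi_m(t)$, the expression $\zeta$ becomes a sum of at most $m-1$ terms, each with a distinct character and a nonzero coefficient. By the induction hypothesis, every surviving $\xi_i$ (in particular $\xi_1$) lies in $N$.

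Now that $\xi_1\in N$, I form $\sum_{i=1}^m a_i\xi_i - a_1\xi_1=\sum_{i=2}^m a_i\xi_i\in N$, which again has fewer than $m$ terms. A second application of the induction hypothesis gives $\xi_i\in N$ for every $i\geq 2$, completing the induction.

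I expect no serious obstacle: the entire argument rests on the existence of a separating element $t$, which is a triviality for distinct characters, together with the invertibility of the scalars $a_i$ in the field $\Bbbk$. The case of $T_{q^a}$ requires no separate treatment, since the same algebraic manipulation is carried out inside $\Bbbk T_{q^a}\subset\Bbbk{\bf T}$. This is precisely the eigenspace-separation lemma that one would expect in this context, and it will be the workhorse in the later sections where submodules of $\mathbb{M}(\theta)$ are analyzed through their ${\bf T}$- or $T_{q^a}$-weight decompositions.
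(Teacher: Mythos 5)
Your proof is correct and takes the same route as the paper, which simply declares the lemma ``obvious by induction on $m$''; your argument supplies the standard inductive details (separating element, peeling off one eigenvector, then subtracting) that the authors omit.
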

\begin{proof}The proof is obvious by induction on $m$.\end{proof}

Now we return to the main step of the proof. By Lemma \ref{basis}, it is sufficient to prove that $E(\theta)_J'$ is irreducible for any $J\subseteq I(\theta)$. This follows from the following four technical results.

\begin{lemma}\label{CDY-lemma}
Let $\theta\in\widehat{\bf T}$, and let $M$ be a $\Bbbk{\bf G}$-module and $\eta\in M_\theta$. If $ M'$ is a submodule of $  M$ containing $\underline{U_{q^a}}\eta$ for some positive integer $a$, then $\eta \in   M'$.
\end{lemma}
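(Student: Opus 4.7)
The strategy is to amplify the hypothesis $\underline{U_{q^a}}\eta \in M'$ using the abundance of elements in the \emph{infinite} group ${\bf G}$ (especially the infinite torus ${\bf T}$), in order to extract a nonzero scalar multiple of $\eta$ inside $M'$.

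First I would strengthen the hypothesis on $a$. For any $b$ that is a multiple of $a$, a coset decomposition $U_{q^b} = \bigsqcup_i x_i U_{q^a}$ gives $\underline{U_{q^b}} = \sum_i x_i \underline{U_{q^a}}$ in $\Bbbk{\bf G}$, hence $\underline{U_{q^b}}\eta = \sum_i x_i(\underline{U_{q^a}}\eta) \in M'$. So we may replace $a$ by any larger multiple. Moreover, since $M'$ is stable under ${\bf T}$ and $t\,\underline{U_{q^a}}\eta = \theta(t)\,\underline{tU_{q^a}t^{-1}}\eta$, each conjugate $\underline{tU_{q^a}t^{-1}}\eta$ lies in $M'$ for every $t \in {\bf T}$. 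Under the set-theoretic parametrization ${\bf U} \cong \prod_{\alpha\in\Phi^+}{\bf U}_\alpha$, the subgroup $tU_{q^a}t^{-1}$ corresponds to the ``product of $\mathbb{F}_{q^a}$-lines'' $\prod_\alpha U_\alpha(\alpha(t)\mathbb{F}_{q^a})$, so the problem is reduced to a combinatorial statement about covering $U_{q^b}$ by such products of lines inside ${\bf U}$.

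Second, the rank-one case ${\bf G}=SL_2$ illustrates the mechanism cleanly: the $\bf T$-conjugates of $U_{q^a}$ inside $U_{q^b}$ are precisely the one-dimensional $\mathbb{F}_{q^a}$-subspaces of $\mathbb{F}_{q^b}$, and they partition $\mathbb{F}_{q^b}$ (with shared identity). Summing over all such lines $L$,
\begin{equation*}
\sum_L \underline{L}\,\eta \;=\; N_L\,\eta + \bigl(\underline{U_{q^b}}\eta - \eta\bigr),\qquad N_L = \frac{q^b-1}{q^a-1},
\end{equation*}
and subtracting $\underline{U_{q^b}}\eta \in M'$ yields $(N_L-1)\eta = q^a\cdot\frac{q^{b-a}-1}{q^a-1}\cdot\eta \in M'$. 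Taking $b=2a$ gives the coefficient $q^a$, which is invertible in $\Bbbk$ since $\op{char}\Bbbk \neq p$, and therefore $\eta \in M'$.

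Third, for general ${\bf G}$ the naïve partition fails: an element whose some root coordinates vanish lies in many different $\bf T$-conjugates of $U_{q^a}$, so the sum over conjugates cannot be turned directly into $c\,\eta + (\text{multiple of }\underline{U_{q^b}}\eta)$. To overcome this I would stratify elements of ${\bf U}$ by the support $S \subseteq \Phi^+$ of their nonzero root coordinates and run an inclusion-exclusion over $S$, combined with the translates $\underline{tU_{q^a}t^{-1}}\eta \in M'$ for a suitably chosen finite $T_0 \subset {\bf T}$. The choice is made so that on each stratum the contribution behaves like the rank-one model computation above, so after signed combination the coefficient of $\eta$ is a polynomial in $q$ divisible only by a $p$-power (hence nonzero in $\Bbbk$), while every other coefficient is absorbed into a $\Bbbk$-linear combination of translates of $\underline{U_{q^b}}\eta$.

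The principal obstacle is exactly this last step: organizing the inclusion-exclusion over supports and choosing $T_0 \subset {\bf T}$ so that the $\eta$-coefficient is controllable and nonzero in $\Bbbk$, while the remaining coefficients visibly lie in $M'$. The essential input is that the characters $\alpha|_{\bf T}$ for $\alpha\in\Phi^+$ are multiplicatively independent enough on the infinite torus (so we can choose $t \in {\bf T}$ with $\alpha(t)\notin\mathbb{F}_{q^a}^*$ for all positive roots simultaneously), and that we have full freedom to enlarge $a$ to any multiple $b$. With care the rank-one identity is propagated through the positive roots, ultimately producing a nonzero multiple of $\eta$ in $M'$.
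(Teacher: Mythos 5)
Your rank-one computation is correct and is in fact exactly the key identity of the paper's proof: summing $\theta(t_j)^{-1}t_j\underline{U_{q^a}}\eta$ over torus elements $t_j$ realizing the $q^a+1$ cosets of $\mathbb{F}_{q^a}^*$ in $\mathbb{F}_{q^{2a}}^*$ yields $q^a\eta+\underline{U_{q^{2a}}}\eta$, and subtracting $\underline{U_{q^{2a}}}\eta\in M'$ gives $q^a\eta\in M'$, with $q^a$ invertible since $\operatorname{char}\Bbbk\neq p$. The preliminary reductions (enlarging $a$ to any multiple, stability of $M'$ under torus conjugation, the description of $tU_{q^a}t^{-1}$ as $\prod_\alpha\varepsilon_\alpha(\alpha(t)\mathbb{F}_{q^a})$) are also all correct.

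The gap is in the passage to general ${\bf G}$, and you have named it yourself without closing it. The inclusion--exclusion over supports is only a plan, and the difficulty is not merely bookkeeping: after averaging over any finite set $T_0\subset{\bf T}$, the element $\sum_{t\in T_0}\theta(t)^{-1}t\underline{U_{q^a}}\eta$ equals $\sum_u c_u\,u\eta$ with coefficients $c_u$ depending on which $\mathbb{F}_{q^a}$-line each root coordinate of $u$ lies in, and the natural correction terms are averages $\underline{X_S}\eta$ over \emph{proper sub-products} of root subgroups. These are not translates of $\underline{U_{q^b}}\eta$ and are not known to lie in $M'$ at the outset, so there is nothing to subtract. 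The paper resolves this by replacing the global inclusion--exclusion with an induction along a filtration of ${\bf U}$ by subgroups: fixing a reduced expression $w_0=s_{\alpha_r}\cdots s_{\alpha_1}$ with associated enumeration $\beta_1,\dots,\beta_r$ of $\Phi^+$, the tails $X_{i,q^b}=U_{\beta_r,q^b}\cdots U_{\beta_i,q^b}$ are subgroups, and one proves $\underline{X_{i,q^{b_i}}}\eta\in M'$ by induction on $i$, applying your rank-one identity only in the single direction $\beta_i$ at each step. Two devices make this work which your sketch would also need: multiplying by $\underline{X_{i+1,q^{b_{i+1}}}}$ collapses the conjugated factors $t_jyt_j^{-1}$ (because $X_{i+1}$ is a group), and $b_{i+1}$ is chosen large enough that all $\beta_m(t_j)$ for $m\geq i$ lie in $\mathbb{F}_{q^{b_{i+1}}}$ --- this absorbs the unavoidable rescaling of the \emph{other} root coordinates under torus conjugation, which your stratification by supports does not address. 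As written, your argument proves the lemma for $SL_2$ but not in general; to complete it you would essentially have to reconstruct this sequential peeling.
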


\begin{lemma}\label{claim1}
For $J\subseteq I(\theta)$, if $M$ is a nonzero submodule of $E(\theta)_J'$, then $$M\cap \sum_{w\in Z_J\cap W_\theta}\Bbbk \dot{w}D(\theta)_J\neq0.$$
\end{lemma}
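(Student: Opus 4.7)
My plan is to take a nonzero $\xi\in M$ and, through three reductions, produce a nonzero element of $M$ in the target subspace $\sum_{w\in Z_J\cap W_\theta}\Bbbk\dot{w}D(\theta)_J$. I would first expand $\xi$ in the basis of Proposition \ref{basis} and use the $\tau$-operator calculus (Lemma \ref{tau}, Corollaries \ref{tau2} and \ref{tau3}) to prune the $W$-support. Specifically, by applying a product $\tau_{j_k}\cdots\tau_{j_1}$ of length $k$ with indices aligned to a reduced expression for the maximum-length element $w_{\max}$ in $\op{supp}\xi$, Corollary \ref{tau3}(1) kills contributions at lengths below $\ell(w_{\max})-k$ while Corollary \ref{tau3}(2) pins down an explicit nonzero coefficient at length exactly $\ell(w_{\max})-k$. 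Iterating this process reduces $\xi$ to a nonzero $\xi'\in M$ whose support is concentrated in a single length stratum.

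Next, I would strip off the ${\bf U}$-prefactors of $\xi'$. Choose $a$ large enough so that all $u$'s occurring in $\xi'$ lie in $U_{q^a}$, and examine $\underline{U_{q^a}}\xi'\in M$. The decisive tool is Lemma \ref{CDY-lemma}: for any $\theta$-weight vector $\eta$, containment of $\underline{U_{q^a}}\eta$ in a submodule implies containment of $\eta$ itself. Combining this with a $T_{q^a}$-weight decomposition of the $U_{q^a}$-orbit sums on each basis element $u\dot{w}D(\theta)_J$ (using that $T$ acts on ${\bf U}_{w_Jw^{-1}}$ by conjugation), one extracts from $\underline{U_{q^a}}\xi'$ a nonzero element of $M$ of the ``bare'' form $\sum c_w\dot{w}D(\theta)_J$.

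Finally, applying Lemma \ref{eigenvector} to the $T$-weight decomposition of $\sum c_w\dot{w}D(\theta)_J$---each $\dot{w}D(\theta)_J$ has weight $\theta^w$---places the $\theta$-weight component $\sum_{w\in W_\theta}c_w\dot{w}D(\theta)_J$ inside $M$, and this is exactly the sort of element we seek. If this component happens to vanish, I would translate $\xi$ by $\dot{v}^{-1}$ for some $v\in\op{supp}\xi$ and rerun the argument, exploiting $1\in Z_J\cap W_\theta$ to ensure the identity-term survives the reduction. The principal obstacle will be Step 2: carefully coordinating the $T_{q^a}$-weight analysis with Lemma \ref{CDY-lemma} so that the $U_{q^a}$-averaging genuinely strips the ${\bf U}$-prefactors rather than merely symmetrizing them---this is precisely the new technique hinted at in the introduction as ``highly nontrivial.''
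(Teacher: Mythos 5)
There is a genuine gap, and it sits exactly where you flag ``the principal obstacle.'' Your Step 2 applies $\underline{U_{q^a}}$ to an arbitrary nonzero element $\xi'\in M$ and hopes to extract a nonzero ``bare'' combination $\sum c_w\dot{w}D(\theta)_J$. But for $u\in U_{q^a}$ one has $\underline{U_{q^a}}\,u=\underline{U_{q^a}}$, so $\underline{U_{q^a}}\xi'$ only remembers the sums of coefficients over each $\dot{w}$-block, and these can all vanish: e.g.\ $\xi'=(u-1)\dot{w}D(\theta)_J$ gives $\underline{U_{q^a}}\xi'=0$. Averaging over $U_{q^a}$ does not strip prefactors from a general element; it can annihilate it. The paper closes this hole \emph{before} averaging: it first produces a nonzero $U_{q^a}$-\emph{fixed} vector in $M$. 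This is where the hypothesis $\op{char}\Bbbk\neq p$ does its real work: any irreducible $\Bbbk G_{q^a}$-submodule $L$ of $\Bbbk G_{q^a}x\subset M$ satisfies $L^{U_{q^a}}\neq0$, proved via Frobenius reciprocity from $\op{Ind}_{U_{q^a}}^{G_{q^a}}\op{tr}$ together with semisimplicity of the $U_{q^a}$-action (a Maschke argument, since $p\nmid\dim$ issues disappear in cross characteristic). A nonzero fixed vector necessarily has the shape $\xi=\sum_w c_w\underline{U_{w_Jw^{-1},q^a}}\dot{w}D(\theta)_J$, and then $\underline{U_{q^a}}\xi=q^{aN}\xi\neq0$ because $q$ is invertible in $\Bbbk$ --- so the averaged element equals $\underline{U_{q^a}}\sum_w c_wq^{a\ell(w_Jw^{-1})}\dot{w}D(\theta)_J$ with nonzero coefficients, and Lemma \ref{eigenvector} plus Lemma \ref{CDY-lemma} finish as you describe. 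Your proposal never invokes $\op{char}\Bbbk\neq p$ at this decisive point, which is a sign the argument cannot be complete as written.

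A secondary problem is Step 1: Lemma \ref{tau} and Corollaries \ref{tau2}, \ref{tau3} compute $\tau_i$ only on the bare vectors $\dot{w}D(\theta)_J$, not on general basis elements $u\dot{w}D(\theta)_J$ with $u\in{\bf U}_{w_Jw^{-1}}$ nontrivial. So the proposed support-pruning by $\tau$-operators is not justified for a general element of $M$, and in the paper this calculus is deployed only in Lemmas \ref{claim2} and \ref{claim3}, after the $U$-prefactors have already been removed. Your Step 3 (weight decomposition via Lemma \ref{eigenvector}, then Lemma \ref{CDY-lemma}, then translating by $\dot{v}^{-1}$) does match the paper's endgame; the missing ingredient is the fixed-point existence argument.
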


\begin{lemma}\label{claim2}
For $J\subseteq I(\theta)$, if $M$ is a nonzero submodule of $E(\theta)_J'$, then $$\displaystyle M\cap \sum_{w\in Z_J\cap W_{I(\theta)}}\Bbbk \dot{w}D(\theta)_J\neq 0.$$
\end{lemma}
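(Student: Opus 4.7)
My plan is to start from the element produced by Lemma \ref{claim1} and iteratively prune its support from $Z_J\cap W_\theta$ down to $Z_J\cap W_{I(\theta)}$. By Lemma \ref{claim1}, choose a nonzero
\[\xi=\sum_{w\in S}a_w\dot w D(\theta)_J\in M,\qquad S\subseteq Z_J\cap W_\theta,\ a_w\ne 0,\]
and pick such a $\xi$ with $|S|$ minimal. If $S\subseteq W_{I(\theta)}$ we are done, so suppose otherwise and fix some $w_0\in S\setminus W_{I(\theta)}$.

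The key combinatorial observation is that $W_{I(\theta)}$ is precisely the (standard) parabolic subgroup generated by $\{s_i:i\in I(\theta)\}$. Hence any reduced expression of $w_0$ must involve at least one simple reflection $s_j$ with $j\notin I(\theta)$; equivalently, $s_j\notin W_\theta$, so $\theta^{s_j}\ne\theta$. The idea is now to apply the operator $\tau_j$ (for a carefully chosen such $j$) to $\xi$. By Lemma \ref{tau}, the result $\tau_j\xi\in M$ is a sum of terms of two types: terms of the form $\dot w D(\theta)_J$ (arising in Cases A and B, with $T$-weight $\theta^w=\theta$ when $w\in W_\theta$), and terms of the form $\dot s_j^{-1}\dot w D(\theta)_J$ (arising from Case A, with $T$-weight $\theta^{s_j w}=\theta^{s_j}\ne\theta$). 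Because these two $T$-weights are distinct on $\mathbf T$, Lemma \ref{eigenvector} allows us to extract the $\theta$-weight part $\xi'\in M$. Then $\xi'$ has support in $Z_J\cap W_\theta$, and reinvoking Lemma \ref{claim1} on $\xi'$ (or rather on the submodule it generates) keeps us in the required subspace.

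The crux of the argument is to choose $j\notin I(\theta)$ so that $w_0$ falls in Case C of Lemma \ref{tau} (so $\tau_j\dot w_0D(\theta)_J=0$) while at least not introducing spurious terms that offset the reduction. Since $w_0\in Z_J$ satisfies $\mathscr R(w_0w_J)\subseteq J\cup(I\setminus I(\theta))$, a combinatorial analysis of left/right descents should produce such a $j$: roughly, a simple reflection $s_j$ with $j\notin I(\theta)$ for which $s_jw_0w_J>w_0w_J$, forcing Case C precisely at $w_0$. Having arranged this, $\xi'$ is obtained from $\xi$ by deleting the term at $w_0$ and rescaling the remaining $a_w$ by scalars $a_j$ or $b_{j,w}$; in particular, $|\mathrm{supp}(\xi')|<|S|$, contradicting the minimality of $|S|$.

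The main obstacle I anticipate is Step 3 above: guaranteeing the existence of a simple reflection $s_j$ with $j\notin I(\theta)$ that puts $w_0$ into Case C without simultaneously sending other elements of $S$ into Case A (which would produce extra $\theta$-weight terms that could accidentally re-enlarge the support, or worse, cancel everything). This requires a careful combinatorial lemma about the left descents of elements of $W_\theta\setminus W_{I(\theta)}$ inside $Z_J$, together with a variable coefficient $b_{j,w}$ analysis to ensure no unintended cancellations. If a single $\tau_j$ does not suffice, one iterates the construction on $\xi'$, with the minimality of $|S|$ providing the descending-induction parameter.
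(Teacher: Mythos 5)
Your overall strategy --- prune the support of the element supplied by Lemma \ref{claim1} using the operators $\tau_j$ together with weight extraction via Lemma \ref{eigenvector} --- is in the right spirit, but the step you yourself flag as the ``main obstacle'' is precisely the mathematical content of the lemma, and it is not filled in. Two things are missing. First, you never establish that a simple reflection $s_j$ with $j\notin I(\theta)$ exists which places $w_0$ in the third case of Lemma \ref{tau}; that case is a condition on left descents of $w_0w_J$, whereas the hypothesis $\mathscr{R}(w_0w_J)\subseteq J\cup(I\setminus I(\theta))$ from $w_0\in Z_J$ constrains right descents, so no such $j$ is obviously available. Second, and more seriously, even granting such a $j$, the $\theta$-weight part $\xi'$ of $\tau_j\xi$ could vanish: every $w\in S\setminus\{w_0\}$ might land in the second case of Lemma \ref{tau} with $b_j=0$ (the scalars $b_j$ are uncontrolled --- they depend on the choice of representatives) or in the third case, in which case $\xi'=0$ and no contradiction with the minimality of $|S|$ is obtained. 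Your proposal acknowledges both difficulties but offers no mechanism to resolve them, and ``iterate if a single $\tau_j$ does not suffice'' is not an argument, since the induction parameter only moves when $\xi'\neq 0$.

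The paper's proof avoids these problems by different bookkeeping. It inducts not on the number of terms $|S|$ but on the number $|N(\varepsilon)|$ of left cosets of $W_{I(\theta)}$ (inside the minimal parabolic $W_{J(\theta)}$ containing $W_\theta$) meeting the support, and it applies not a single $\tau_j$ but a sequence of weight-shifted operators $\Omega_{j_t}\cdots\Omega_{j_1}$ along a \emph{reduced expression} of a coset representative of minimal length. The nonvanishing that your argument lacks is then supplied by the length-triangularity of the $\tau$'s (Corollaries \ref{tau2} and \ref{tau3}): when the length drop equals the number of operators applied, the coefficient of the target term is guaranteed nonzero. This is what ensures first that the $\theta$-weight factor $\varepsilon'$ has a nonzero component in $V_1$, and then that applying $\Omega_{k_{r-1}}\cdots\Omega_{k_1}$ along a reduced word of a minimal element of $N(\zeta)$ kills that $V_1$-component (at the step where $s_{k_l}\notin W_{I(\theta)}$) while keeping the result nonzero, so the coset count strictly decreases. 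To complete your argument you would need to import this device, at which point you would essentially have reconstructed the paper's proof.
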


\begin{lemma}\label{claim3}
For $J\subseteq I(\theta)$, if $M$ is a nonzero submodule of $E(\theta)_J'$ such that $$\displaystyle M\cap \sum_{w\in Z_J\cap W_{I(\theta)}}\Bbbk \dot{w}D(\theta)_J\neq0,$$ then $D(\theta)_J\in M$ and hence $M=E(\theta)_J'$.
\end{lemma}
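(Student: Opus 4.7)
I would take any nonzero $\xi = \sum_{w\in S}a_w\dot{w}D(\theta)_J \in M$ (with $S\subseteq Z_J\cap W_{I(\theta)}$ and $a_w\in\Bbbk\backslash\{0\}$) and argue by induction on a suitable complexity measure of $S$, for instance lexicographically on the pair consisting of the maximum length of an element of $S$ and the number of such maximum-length elements. The base case $S=\{e\}$ gives $D(\theta)_J = a_e^{-1}\xi\in M$ directly, and once $D(\theta)_J\in M$ we obtain $M\supseteq\Bbbk{\bf G}D(\theta)_J=E(\theta)_J'$ from Proposition \ref{basis}.

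The structural input that powers the induction is Lemma \ref{Wtheta}(i), which ensures $W_{I(\theta)}\subseteq W_\theta$. I would exploit this in two ways. First, every $\dot{w}D(\theta)_J$ with $w\in W_{I(\theta)}$ is a ${\bf T}$-eigenvector of weight $\theta$, so all terms in $\xi$ share the same ${\bf T}$-character. Second, and more importantly, the scalar $a_j=(\theta^w(\dot{s_j}h_j(u_j)\dot{s_j}))^{-1}$ from Lemma \ref{tau} does not depend on $w\in W_{I(\theta)}$; denote its common value by $\alpha_j\in\Bbbk$.

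For the inductive step with $S\neq\{e\}$, I would pick $w_0\in S$ of maximal length and, using $w_0\in W_{I(\theta)}\backslash\{e\}$, choose $j\in I(\theta)$ with $s_jw_0<w_0$. Applying the element $\tau_j-\alpha_j\in\Bbbk{\bf G}$ to $\xi$, Lemma \ref{tau} shows that on $\dot{w_0}D(\theta)_J$ the result is $-\dot{s_j}^{-1}\dot{w_0}D(\theta)_J$, a representative of the strictly shorter element $s_jw_0\in W_{I(\theta)}$; on any other $\dot{w}D(\theta)_J$ with $s_jw\leq w$ it is $-\dot{s_j}^{-1}\dot{w}D(\theta)_J$; and on the remaining basis vectors the action is scaling by a fixed constant or zero. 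Re-expressing the result in the basis of Proposition \ref{DesEJ} then produces an element $(\tau_j-\alpha_j)\xi\in M$ that again lies in $\sum_{v\in Z_J\cap W_{I(\theta)}}\Bbbk\dot{v}D(\theta)_J$ but has strictly smaller complexity, so the induction hypothesis applies.

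The main obstacle will be two bookkeeping issues. First, I must verify that each $\dot{s_j}^{-1}\dot{w}D(\theta)_J$ appearing in the reduction actually expands in the Proposition \ref{DesEJ} basis only into indices in $Z_J\cap W_{I(\theta)}$, with no spillover into basis vectors $u\dot{v}D(\theta)_J$ having $u\neq 1$; this rests crucially on $j\in I(\theta)$ (so $s_jw\in W_{I(\theta)}$) together with the antisymmetry of $D(\theta)_J$ under $W_J$, which absorbs any $W_J$-factor after passing to the $X_J$-coset representative. Second, I need to handle the degenerate possibility that $(\tau_j-\alpha_j)\xi$ vanishes; in that event $\xi$ is a $\tau_j$-eigenvector, which restricts the support of $\xi$ sufficiently to allow a different choice of $j$ (or of maximal-length $w_0$) that strictly reduces complexity. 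Once these two points are settled, the induction terminates at the base case and we conclude $D(\theta)_J\in M$, hence $M=E(\theta)_J'$.
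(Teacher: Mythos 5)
Your overall strategy --- driving down the support of an element $\xi\in M\cap\sum_{w\in Z_J\cap W_{I(\theta)}}\Bbbk\,\dot{w}D(\theta)_J$ by means of the operators $\tau_j$ of Lemma \ref{tau} --- is in the right spirit: the paper's own proof is a verbatim transport of \cite[Claim 2]{CD1}, which likewise works with the $\tau_j$'s, but through products $\tau_{j_k}\cdots\tau_{j_1}$ along reduced expressions controlled by Corollaries \ref{tau2} and \ref{tau3}. Your observation that the scalar $a_j=(\theta^w(\dot{s_j}h_j(u_j)\dot{s_j}))^{-1}$ is constant on $W_{I(\theta)}\subseteq W_\theta$ is correct, and your first ``bookkeeping issue'' (re-expanding $\dot{s_j}^{-1}\dot{w}D(\theta)_J$ when $s_jw\notin Z_J$, using $C_{w_K}{\bf 1}_{\theta,J'}=0$ for $J\subsetneq K$) really is only bookkeeping, since the re-expansion stays inside $W_{I(\theta)}$ and only lowers lengths.

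The second issue, however, is not bookkeeping but a genuine gap, and the escape you propose does not work. Take ${\bf G}=SL_3$, $\theta$ trivial and $J=\{1\}$, so that $Z_J\cap W_{I(\theta)}=Z_J=\{e,s_2\}$. Lemma \ref{tau} gives $\tau_2D(\theta)_J=0$ (since $s_2w_J=s_2s_1>s_1=w_J$, the third case) and $\tau_2\dot{s_2}D(\theta)_J=\alpha_2\dot{s_2}D(\theta)_J-D(\theta)_J$, so for $\xi=D(\theta)_J-\alpha_2\dot{s_2}D(\theta)_J$ one gets $(\tau_2-\alpha_2)\xi=0$ exactly. Here the support has a \emph{unique} maximal-length element $w_0=s_2$ and a \emph{unique} $j$ with $s_jw_0<w_0$, so there is no ``different choice of $j$ or of $w_0$'': your induction stalls on a nonzero element of the relevant span, and the argument is incomplete (the lemma is of course still true for this $\xi$, but your step cannot reach that conclusion). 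To dislodge such an eigenvector one must use more of $\Bbbk{\bf G}$ than the single shifted operator $\tau_j-\alpha_j$: e.g.\ the operators $\tau_i$ acting in the regime $s_iww_J<ww_J$, $s_iw\geq w$ (whose scalars $b_i$ your argument does not control and which may vanish), or the action of the root subgroups and torus directly. Supplying exactly this extra leverage is what the machinery of Corollaries \ref{tau2} and \ref{tau3}, i.e.\ the argument of \cite[Claim 2]{CD1} that the paper invokes, is designed to do; as written, your proposal does not yet close this case.
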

\noindent Once these lemmas are proved, we can prove Theorem \ref{main}.
\begin{proof}[{\it Proof of Theorem \ref{main}.}]
Combining Lemma \ref{claim2} and \ref{claim3}, we see that any nonzero submodule $M$ of $E(\theta)_J'$ contains $D(\theta)_J$, and hence $M=E(\theta)_J'$. In particular, all $E(\theta)_J'$ are irreducible for any $J\subset I(\theta)$. Therefore, all $E(\theta)_J$ are irreducible and pairwise non-isomorphic by Proposition \ref{EJ} and  \ref{basis}. This completes the proof.
\end{proof}

\noindent It remains to prove the above four lemmas.
\begin{proof}[{Proof of Lemma \ref{CDY-lemma}}] The proof is motivated by \cite[2.7]{Yang}. Let $$w_0=s_{\alpha_r}s_{\alpha_{r-1}}\dots s_{\alpha_1}$$ be a reduced
expression of the longest element $w_0$ of $W$, and set
$$\beta_i=s_{\alpha_1}s_{\alpha_{2}}\dots s_{\alpha_{i-1}
}{(\alpha_i)}.$$ Then for each $i=1,2, \dots, r$ and any positive integer $b$,
$$X_{i,q^b}=U_{\beta_r,q^b}U_{\beta_{r-1},q^b}\dots U_{\beta_i,q^b}$$
is a subgroup of ${U}_{q^b}= X_{1, q^b}$. Clearly, $X_{i,q^b}$ is a subgroup of
$X_{i,q^{b'}}$ if $\mathbb{F}_{q^b}$ is a subfield of
$\mathbb{F}_{q^{b'}}$. Here we set $X_{r+1,q^b}=\{1\}$.

First, we use induction on $i$ to show that there exists positive
integer $b_i$ such that the element $\underline {X_{i,q^{b_i}}}\eta$ is in $M'$. When $i=1$, this is true
for $b_1=a$ by assumption. Now we assume that $\underline {X_{i,q^{b_i}}}\eta$ is in $M'$. We show that
$\underline{X_{{i+1},q^{b_{i+1}}}}\eta$ is in $M'$ for
some positive integer  $b_{i+1}$.

Let $c_1,c_2,\dots,c_{q^{b_i}+1}$ be a complete set of
representatives of all cosets of $\mathbb{F}_{q^{b_i}}^*$ in
$\mathbb{F}_{q^{2b_i}}^*$. Choose $t_1,t_2,\dots,t_{q^{b_i}+1}\in {\bf T}$
such that $\beta_i(t_j)=c_j$ for $j=1,2,\dots,q^{b_i}+1$. Note that
$t\eta=\theta(t)\eta$ for any $t\in {\bf T}$.
Thus\begin{equation}\label{eq3}\sum_{j=1}^{q^{b_i}+1}\theta(t_j)^{-1} t_j
\underset{x\in U_{\beta_i,q^{b_i}}}{\sum} x\eta =
q^{b_i}\eta+ \underline{U_{\beta_i,q^{2b_i}}}\eta.\end{equation}
Since $X_{i,q^{b_i}}=X_{i+1,q^{b_i}}U_{\beta_i,q^{b_i}}$ and
$\underline{X_{i,q^{b_i}}} \eta$ is in $M'$, we see
that
$$
\aligned \zeta &\ := \sum_{j=1}^{q^{b_i}+1}\theta(t_j)^{-1} t_j\underset{x\in X_{i,q^{b_i}}}{\sum}x\eta
= \sum_{j=1}^{q^{b_i}+1}\theta(t_j)^{-1} t_j \underset{y\in X_{i+1,q^{b_i}}}{\sum}y
\underset{x\in U_{\beta_i,q^{b_i}}}{\sum}x\eta \\
&\ = \underset {y\in
 X_{i+1,q^{b_i}}}{\sum}\sum_{j=1}^{q^{b_i}+1}t_jyt_j^{-1}(\theta(t_j)^{-1}t_j\underset{x\in U_{\beta_i,q^{b_i}}}{\sum}x\eta) \in   M'.
\endaligned
$$
Choose $b_{i+1}$ such that all $\beta_m(t_j)(r\geq m \geq i)$ are
contained in $\mathbb{F}_{q^{b_{i+1}}}$ and
$\mathbb{F}_{q^{b_{i+1}}}$ contains  $\mathbb{F}_{q^{2b_i}}$. Then
$t_j y t_j^{-1}$ is in $X_{i+1,q^{b_{i+1}}}$ for any $y\in
X_{i+1,q^{b_i}}$. Then we have
\begin{equation} \label{eq4}
\begin{split}
\underline{X_{i+1,q^{b_{i+1}}}}  \zeta &\ = q^{(r-i)b_i}\underline{X_{i+1,q^{b_{i+1}}}} \sum_{j=1}^{q^{b_i}+1} \theta(t_j)^{-1} t_j \underset {x\in U_{\beta_i,q^{b_i}}}{\sum}x\eta\\
&\ = q^{(r-i)b_i}\underline{X_{i+1,q^{b_{i+1}}}}(q^{b_i}\eta + \underline{U_{\beta_i,q^{2b_i}}}\eta)\ \in
M'.
\end{split}
\end{equation}
Because $\underline{X_{i,q^{b_i}}}\eta$ is in $M'$,
we have $\underline{X_{i,q^{2b_i}}}\eta \in M'$. Thus
\begin{equation} \label{eq5}
\underline{X_{i+1,q^{b_{i+1}}}}\  \underline{X_{i,q^{2b_i}}} \eta=q^{2(r-i)b_i}\underline{X_{i+1,q^{b_{i+1}}}} \  \underline{U_{\beta_i,q^{2b_i}}}\eta \ \in   M'.\end{equation}

Since $q\neq 0$ in $\Bbbk$, combining formula (\ref{eq4}) and (\ref{eq5}) we
see that $\underline{X_{i+1,q^{b_{i+1}}}} \eta \in   M'$. Noting that
$X_{r,q^{b_r}}=U_{\beta_r,q^{b_r}}$, now we have $\underline{U_{\beta_r,q^{b_r}}}\eta \in   M'$ and $\underline{U_{\beta_r,q^{2b_r}}} \eta \in   M'$. Applying formula (\ref{eq3}) to
the case $i=r$ we get that $q^{b_r}\eta + \underline{U_{\beta_r,q^{2b_r}}}\eta \in M'$. Therefore $\eta$ is in $M'$. The lemma is proved.
\end{proof}

\begin{proof}[{\it Proof of Lemma \ref{claim1}}]
The proof is analogous to that of \cite[Claim 1]{CD1}. Assume that $M$ is a nonzero submodule of $E(\theta)_J'$ and $0\neq x\in M$. Then $x\in E(\theta)_{J,q^a}'=\Bbbk G_{q^a}D(\theta)_J$ for some $a>0$.

For any $K\subset I(\theta)$, set $\mathbb{M}(\theta,K)_{q^a}=\op{Ind}_{P_{K,q^a}}^{G_{q^a}}\Bbbk_{\theta}$. For a finite-dimensional $\Bbbk G_{q^a} $-module $N$, we denote by $N^*$ the dual space of $N$ which is also a $\Bbbk G_{q^a} $-module. Now assume that $L\in\op{Irr}_\Bbbk(G_{q^a})$ is a simple submodule of $\mathbb{M}(\theta,K)_{q^a}$. Then
$$\op{Hom}_{G_{q^a}}(\mathbb{M}(\theta^{-1},K)_{q^a}, L^*)=\op{Hom}_{G_{q^a}}((\mathbb{M}(\theta,K)_{q^a})^*, L^*)\ne 0.$$
Since for each character $\theta$, $\op{Ind}_{P_{K,q^a}}^{G_{q^a}}\Bbbk_{\theta}$ is a quotient module of $\op{Ind}_{B_{q^a}}^{G_{q^a}}\Bbbk_{\theta}$, which is a quotient module of $\op{Ind}_{U_{q^a}}^{G_{q^a}}\op{tr}$, we have $\op{Hom}_{G_{q^a}}(\op{Ind}_{U_{q^a}}^{G_{q^a}}\op{tr}, L^*)\ne 0$. Therefore
$$(L^*)^{U_{q^a}}\cong \op{Hom}_{U_{q^a}}(\op{tr}, L^*)\cong   \op{Hom}_{G_{q^a}}(\op{Ind}_{U_{q^a}}^{G_{q^a}}\op{tr}, L^*)\ne 0$$ by Frobenius reciprocity. When $\op{char}\Bbbk\neq\op{char}\bar{\mathbb{F}}_q$,
$(L^*)^{U_{q^a}}\ne 0$ is  equivalent to $L^{U_{q^a}}\ne 0$ since $U_{q^a}$ acts semisimply on $L$.

It is clear that $(E(\theta)_{J,q^a}')^{U_{q^a}}\subset\bigoplus_{w\in Z_J}\Bbbk\underline{U_{w_Jw^{-1},q^a}}\dot{w}D(\theta)_J$ by Proposition \ref{basis}, and there is a simple module  $L\in\op{Irr}_\Bbbk(G_{q^a})$ such that $L\subset\Bbbk G_{q^a}x\subset E(\theta)_{J,q^a}'\cap M$. By the previous paragraph, $L^{U_{q^a}}\neq0$, which implies that $(E(\theta)_{J,q^a}')^{U_{q^a}}\cap M\neq0$. Assume that
$$0\neq\xi=\sum_{w\in Z_J}c_w\underline{U_{w_Jw^{-1},q^a}}\dot{w}D(\theta)_J\in(E(\theta)_{J,q^a}')^{U_{q^a}}\cap M,\quad c_w\in\Bbbk.$$
Notice that
$$
\aligned
\underline{U_{q^a}}\xi &\ =\underline{U_{q^a}}\sum_{w\in Z_J}c_w\underline{U_{w_Jw^{-1},q^a}}\dot{w}D(\theta)_J\\
&\ =\sum_{w\in Z_J}\underline{U_{w_Jw^{-1},q^a}'}\cdot\underline{U_{w_Jw^{-1},q^a}}c_w\underline{U_{w_Jw^{-1},q^a}}\dot{w}D(\theta)_J\\
&\ =\sum_{w\in Z_J}\underline{U_{w_Jw^{-1},q^a}'}\cdot\underline{U_{w_Jw^{-1},q^a}}c_wq^{a\ell(w_Jw^{-1})}\dot{w}D(\theta)_J\\
&\ =\underline{U_{q^a}}\sum_{w\in Z_J}c_wq^{a\ell(w_Jw^{-1})}\dot{w}D(\theta)_J \in M.
\endaligned
$$
It follows that $\displaystyle 0\ne\underline{U_{q^a}}\sum_{w\in W_\theta}c'_{w}\dot{v}\dot{w}D(\theta)_J\in M_{\theta^v,q^a}$ for some $v\in W$ by Lemma \ref{eigenvector}.
Since $\displaystyle \sum_{w\in W_\theta}c'_{w}\dot{v}\dot{w}D(\theta)_J\in(E(\theta)_J')_{\theta^v}$, we see that $\displaystyle \sum_{w\in W_\theta}c'_{w}\dot{v}\dot{w}D(\theta)_J \in M$ by Lemma \ref{CDY-lemma}. Thus we have
$$0\ne \sum_{w\in W_\theta}c'_{w}\dot{w}D(\theta)_J \in (E(\theta)_J')_{\theta}\cap M.$$
This completes the proof.
\end{proof}

\begin{proof}[{\it Proof of Lemma \ref{claim2}}]
We first make some preliminaries. Let $W_{J(\theta)}$ be the minimal parabolic subgroup containing $W_\theta$. Then $J(\theta)\supset I(\theta)$ by Lemma \ref{Wtheta}. Let $\{v_1=e,v_2,\cdots,v_n\}$ be a complete set of the representatives (with minimal length) of the left cosets of $W_{I(\theta)}$ in $W_{J(\theta)}$. We set
$$V_i=\displaystyle\sum\limits_{{w\in W_{I(\theta)}}\atop{v_iw\in Z_J}}\Bbbk\dot{v_i}\dot{w}D(\theta)_J  ~~~\text{and}~~~ V=\displaystyle\sum\limits_{1\leq i\leq n}V_i.$$
As a ${\bf T}$-module, each $V_i$ is a ${\bf T}$-weight  space of weight $\theta^{v_i}$ .
One should be careful that $\theta^{v_i}$ may equals to $\theta^{v_j}$ even $v_i\ne v_j$.
Clearly, any $\varepsilon\in V$ can be written uniquely as $\varepsilon=\displaystyle\sum_i\varepsilon_i$ with $\varepsilon_i\in V_i$. Thus for each element $\varepsilon\in V$, define $N(\varepsilon)=\{v_i\mid\varepsilon_i\neq0\}$. On the other hand, each element $\xi \in V$ has a unique decomposition
$$\xi= \xi_{\chi_1}+ \xi_{\chi_2}+ \dots + \xi_{\chi_s},$$
where $\chi_1, \chi_2, \dots , \chi_s$ are different characters. In this case we call $\xi_{\chi_k}$ the $\chi_k$-weight factor of $\xi$ for $k=1,2,\dots, s$.

We fix $u_i\in {\bf U}_{\alpha_i}\backslash\{1\}$ for each $i\in I$  and then consider the functor $\tau_i$ as in Lemma \ref{tau}. Given $\chi\in\widehat{\bf T}$ (actually, $\chi= \theta^{v_k}$ for some integer $k$ in our case) and an element $\varepsilon \in V_\chi$, motivated by the formula in Lemma \ref{tau}, for each $i\in I$, we denote by $\Omega^{\chi}_i(\varepsilon)$ the $\chi^{s_i}$-weight factor of the element $\tau_i\varepsilon-(\chi(\dot{s_i}h_i(u_i)\dot{s_i}))^{-1}\varepsilon$. Noting that when the weight of $\varepsilon$ is known, we can simply denote by $\Omega_i(\varepsilon)= \Omega^{\chi}_i(\varepsilon)$.
Clearly, $\Omega_i$ induces a linear operator on $M\cap V$ by Lemma \ref{eigenvector}. Then it is not difficult to see that
$|N(\Omega_i(\varepsilon))| \leq |N(\varepsilon)|$ using Lemma \ref{tau}.

Now we return to the main proof. By Lemma \ref{claim1}, it is enough to show that if $0\ne\varepsilon\in M\cap V$, then $$\displaystyle M\cap \sum_{w\in Z_J\cap W_{I(\theta)}}\Bbbk \dot{w}D(\theta)_J\neq 0.$$ We will show this by induction on $|N(\varepsilon)|$.

Using Lemma \ref{eigenvector}, we can assume that $\varepsilon$ is a ${\bf T}$-weight vector without lost of generality.  If $|N(\varepsilon)|=1$, then
$$0\ne \dot{v_i}^{-1}\varepsilon\in M\cap \sum_{w\in Z_J\cap W_{I(\theta)}}\Bbbk \dot{w}D(\theta)_J$$
for some $v_i$ and we are done. Now we assume that $|N(\varepsilon)|>1$. Choose $v_j\in N(\varepsilon)$ with $\ell(v_j)=\min\{\ell(v_i)\mid v_i\in N(\varepsilon)\}$ and let $v_j=s_{j_1}\cdots s_{j_t}$ be its one reduced expression. Now we let $\varepsilon' = \Omega_{j_t}\cdots\Omega_{j_1}(\varepsilon)$ which is nonzero and it is not difficult to see that $\varepsilon' \in M\cap V_\theta$. Moreover, we have $|N(\varepsilon')|\leq |N(\varepsilon)|$ by Lemma \ref{tau} and previous discussion.  Now we set $\varepsilon'=\xi+\zeta$, where $0\ne \xi\in V_1$ and $\zeta\in\sum_{i\geq2}V_i$.
If $\zeta =0$, we are done. Otherwise we choose $v_k\in N(\zeta)$ with $\ell(v_k)=\min\{\ell(v_i)\mid v_i\in N(\zeta)\}$ and let $v_k=s_{k_1}\cdots s_{k_r}$ be its one reduced expression. By the property of $W_{I(\theta)}$ and Lemma \ref{tau}, we have (1) $r\geq 2$; (2) $0\ne \varepsilon'':=\Omega_{k_{r-1}}\cdots\Omega_{k_1}(\varepsilon')\in M\cap V_{\theta}$; (3) $s_{k_l}\not\in W_{I(\theta)}$ for some $1\leq l\leq r-1$. It is clear that $$|N(\Omega_{k_i}\cdots\Omega_{k_1}(\varepsilon'))|\leq|N(\Omega_{k_{i-1}}\cdots\Omega_{k_1}(\varepsilon'))|$$ for any $1\leq i\leq r-1$ and
$$|N(\Omega_{k_l}\cdots\Omega_{k_1}(\varepsilon'))|<|N(\varepsilon')|$$
by the choice of $l$ and Lemma \ref{tau} (since the term in $V_1$ is killed by $\Omega_{k_l}\cdots\Omega_{k_1}$). It follows that $|N(\varepsilon'')|<|N(\varepsilon')|\leq|N(\varepsilon)|$ and the result follows from applying the induction hypothesis to $\varepsilon''$. This completes the proof.
\end{proof}

\begin{proof}[{\it Proof of Lemma \ref{claim3}}]
The proof is identical to that of \cite[Claim 2]{CD1} as long as we replace $Y_J$ there with $Z_J\cap W_{I(\theta)}$, and $D_J$ there with $D_J(\theta)$, and using Lemma \ref{tau}, Corollary \ref{tau2}, Corollary \ref{tau3}.
\end{proof}

\section{The natural characteristic (antidominant case) }

 We consider the natural characteristic case in this and next sections. In this section, we always assume that $\Bbbk=\bar{\mathbb{F}}_q$ and $\theta\in X({\bf T})$, the group of rational characters of ${\bf T}$.

For each $\alpha\in\Phi$, we fix an isomorphism $\varepsilon_\alpha: \bar{\mathbb{F}}_q\rightarrow{\bf U}_\alpha$ such that $t\varepsilon_\alpha(c)t^{-1}=\varepsilon_\alpha(\alpha(t)c)$ for any $t\in{\bf T}$ and $c\in\bar{\mathbb{F}}_q$. Set $U_{\alpha,q^a}=\varepsilon_\alpha(\mathbb{F}_{q^a})$. For each $i\in I$, we fix a homomorphism  $\varphi_i: SL_2(\bar{\mathbb{F}}_q)\rightarrow{\bf G}_i$ such that
$$\varphi_i\left(\begin{array}{cc}1 &\ a\\0 &\ 1\end{array}\right)=\varepsilon_{\alpha_i}(a),~~\varphi_i\left(\begin{array}{cc}1 &\ 0\\a &\ 1\end{array}\right)=\varepsilon_{-\alpha_i}(a)$$
for any $a\in\bar{\mathbb{F}}_q$.
For each $c\in\Bbbk^*$ and $i\in I$, one denote
$$h_i(c):=\varphi_i\left(\begin{array}{cc}c &\ 0\\0 &\ c^{-1}\end{array}\right),~~\dot{s_i}=\varphi_i\left(\begin{array}{cc}0 &\ 1\\-1 &\ 0\end{array}\right).$$
It is easy to check that
\begin{equation}\label{s-1us}
\dot{s_i}^{-1}\varepsilon_{\alpha_i}(c)\dot{s_i}=\varepsilon_{\alpha_i}(-c^{-1})\dot{s_i}h_i(c)\varepsilon_{\alpha_i}(-c^{-1})
\end{equation}
for any $c\in\Bbbk^*$.

Let $Y({\bf T})$ be the set of algebraic group homomorphisms $\Bbbk^*\rightarrow{\bf T}$. There is a pair $\langle-,-\rangle: X({\bf T})\times Y({\bf T})\rightarrow\mathbb{Z}$ such that for any $\lambda\in X$ and $\mu\in Y$, $\lambda(\mu(t))=t^{\langle\lambda,\mu\rangle}$ $(t\in\Bbbk^*)$ (see. \cite[Part II 1.3]{Jan1}). For any $\alpha\in\Phi$, denote $\alpha^\vee\in Y({\bf T})$ its dual root such that $\langle\alpha,\alpha^\vee\rangle=2$.

For $\theta\in X({\bf T})$, we call $\theta$ dominant (resp. antidominant) if $\langle\theta,\alpha^\vee\rangle\ge 0$ (resp. $\langle\theta,\alpha^\vee\rangle\le 0$) for all $\alpha\in\Delta$. In other words, $\theta$ is antidominant if and only if $-\theta$ is dominant. Throughout this section, we assume that $\theta$ is antidominant. The main theorem of this section is
\begin{theorem}\label{antidominant}
If $\theta$ is antidominant, then  all $\Bbbk {\bf G}$-modules $E(\theta)_J$ are irreducible for  $J\subset I(\theta)$. Consequently, the $\Bbbk {\bf G}$-module $\mathbb{M}(\theta)$ has exactly $2^{|I(\theta)|}$ pairwise nonisomorphic composition factors, each occurring with multiplicity $1$.
\end{theorem}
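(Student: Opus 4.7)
The plan is to follow the three-step irreducibility strategy outlined in the introduction and deployed in Section~2: for any nonzero submodule $M$ of $E(\theta)_J'$, show that (1) $E(\theta)_J'$ is cyclic on $D(\theta)_J$, (2) $M$ contains a nonzero $U_{q^a}$-fixed vector for some $a\ge 1$, and (3) any such fixed vector is carried to $D(\theta)_J$ by the $\Bbbk\mathbf G$-action. Since (1) is built into the definition and the isomorphism $E(\theta)_J \simeq E(\theta)_J'$ of Proposition~\ref{basis} together with Proposition~\ref{EJ} already provides pairwise non-isomorphism, proving these three steps will give irreducibility of each $E(\theta)_J$, and the filtration whose successive quotients are the $\mathbb M(\theta)_J/\mathbb M(\theta)_J'$ immediately yields the $2^{|I(\theta)|}$-count with multiplicity one.

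Step~(2) is simpler here than in Section~2. Because $E(\theta)_J' = \bigcup_a \Bbbk G_{q^a} D(\theta)_J$, we may choose $a$ so that $N := M \cap \Bbbk G_{q^a} D(\theta)_J$ is a nonzero finite-dimensional $\Bbbk G_{q^a}$-module. Since $U_{q^a}$ is a finite $p$-group and $\op{char}\Bbbk = p$, the module $N$ has a nonzero $U_{q^a}$-fixed vector $\eta$, and the finite-level analogue of Proposition~\ref{basis} places $\eta$ in $\bigoplus_{w\in Z_J} \Bbbk\,\underline{U_{w_Jw^{-1},q^a}}\,\dot w\,D(\theta)_J$.

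The main obstacle is step~(3). The cross-characteristic proofs of Lemmas~\ref{CDY-lemma}--\ref{claim3} repeatedly use averaging over $U_{q^a}$ and division by powers of $q$, both of which collapse in characteristic $p$ since $q=0$ in $\Bbbk$; in particular, the pivotal Lemma~\ref{CDY-lemma} has no direct analogue. The plan is to replace it by an entirely different argument exploiting the antidominance hypothesis $\langle \theta,\alpha_i^\vee\rangle \leq 0$: inside any finite-dimensional $\Bbbk G_{q^b}$-submodule containing $\eta$, the action of each $\varphi_i(SL_2(\mathbb{F}_{q^b}))$ is governed by the structure theory of $SL_2$-Weyl modules due to De~Concini~\cite{De}, and antidominance ensures that the vectors $\dot w\, D(\theta)_J$ with $w\in W_{I(\theta)}\cap Z_J$ sit as lowest-weight vectors of the pertinent $SL_2$-subquotients. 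Iterating these local $SL_2$-analyses over the simple roots and passing to the limit $b\to\infty$ should allow us to recover $D(\theta)_J$ inside $M$ starting from $\eta$; this is the ``new and highly nontrivial technique'' advertised in the introduction, and is where I expect essentially all of the work to go.

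Once such a substitute for Lemma~\ref{CDY-lemma} is in place, the remaining reductions to the $\theta$-weight subspace (analogues of Lemmas~\ref{claim1} and~\ref{claim2}) and the extraction of $D(\theta)_J$ (analogue of Lemma~\ref{claim3}) follow the template of Section~2, using the characteristic-free operators $\tau_i$ and their properties recorded in Lemma~\ref{tau} and Corollaries~\ref{tau2}--\ref{tau3}, together with the elementary eigenvector splitting of Lemma~\ref{eigenvector}. This yields $D(\theta)_J \in M$ and hence $M = E(\theta)_J'$, completing the proof of Theorem~\ref{antidominant}.
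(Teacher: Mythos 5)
Your skeleton (cyclic generator, nonzero $U_{q^a}$-fixed vector via the finite $p$-group argument, then transport of that fixed vector to $D(\theta)_J$) matches the paper's, and you correctly diagnose that Lemma~\ref{CDY-lemma} and the averaging arguments of Section~2 collapse when $q=0$ in $\Bbbk$. But the proposal has a genuine gap at exactly the point you flag: the entire content of step~(3) is deferred to a hoped-for ``substitute'' that is never constructed. In the paper this substitute is concrete and consists of two lemmas. First (Lemma~\ref{c1}), starting from a fixed vector $\xi=\sum_{w\in Z_J}c_w\underline{U_{w_Jw^{-1},q^a}}\dot w D(\theta)_J$ one isolates a \emph{single} term $\underline{U_{w_Jw^{-1},q^c}}\dot w D(\theta)_J\in M$ by enlarging the field on carefully chosen root subgroups (the claim imported from \cite{CD2}). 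Second (Lemma~\ref{c2}), one descends along the Bruhat order from $s_kw$ to $w$; the engine is Lemma~\ref{A1}, an explicit $SL_2$ computation with power sums $\mathbf p_k(\mathbb F_{q^a})$ over finite fields in which antidominance enters precisely to guarantee that $m=-\langle\theta,\alpha_l^\vee\rangle$ satisfies $0<m<q^a$, whence $\binom{q^a-1}{m}\not\equiv 0\ (\mathrm{mod}\ p)$ by Lucas' theorem. Iterating the descent reaches $\underline{U_{w_J,q^b}}D(\theta)_J$, and $D(\theta)_J$ itself is then extracted by Steinberg's identity $\sum_{w\in W_J}q^{b\ell(w)}=1$ in characteristic $p$ --- not by any weight-space analysis.

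Two of your specific expectations are also off target. The claim that the remaining reductions ``follow the template of Section~2, using the characteristic-free operators $\tau_i$'' is not what happens and would not work: the proofs of Lemmas~\ref{claim1} and \ref{claim2} rely on semisimplicity of the $U_{q^a}$-action and on coefficients $q^{a\ell(w_Jw^{-1})}$ being invertible, both of which fail here; the paper's antidominant argument never reduces to the $\theta$-weight space at all. And De~Concini's structure theory of $SL_2$ Weyl/costandard modules \cite{De} is not the tool for this theorem --- it is deployed in Section~4 to prove the \emph{non}-antidominant case has infinite length. Here the only $SL_2$ input needed is the elementary binomial-coefficient computation of Lemma~\ref{A1}, together with the commutation bookkeeping of Lemma~\ref{key}. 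As written, your proposal identifies where the work lies but does not do it.
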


Before proving Theorem \ref{antidominant}, we make some preliminaries (from Lemma \ref{powsum} to Lemma \ref{key}). The following two elementary lemmas are well known, and will be frequently used in this and next section.
\begin{lemma}[{\cite[Lemma 2.1]{Se2}}]\label{powsum}
Let ${\bf p}_k(\mathbb{F}_{q^a})=\sum\limits_{t\in\mathbb{F}_{q^a}}t^k$ and ${\bf p}_k(\mathbb{F}_{q^a}^*)=\sum\limits_{t\in\mathbb{F}_{q^a}^*}t^k$. Then
$${\bf p}_k(\mathbb{F}_{q^a})=\left\{
\begin{array}{ll}
-1 &\ \mbox{if}~(q^a-1)|k~\mbox{and}~k\neq0\\
0 &\ \mbox{otherwise}
\end{array}
\right.,~~~{\bf p}_k(\mathbb{F}_{q^a}^*)=\left\{
\begin{array}{ll}
-1 &\ \mbox{if}~(q^a-1)|k\\
0 &\ \mbox{otherwise}
\end{array}
\right..$$
\end{lemma}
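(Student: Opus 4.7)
The plan is to reduce both sums to a geometric-series computation in the cyclic group $\mathbb{F}_{q^a}^\ast$, treating the edge cases separately. Since the two assertions are related by $\mathbf{p}_k(\mathbb{F}_{q^a})=\mathbf{p}_k(\mathbb{F}_{q^a}^\ast)+0^k$ (with the usual convention $0^0=1$, and $0^k=0$ for $k\geq 1$), I would first handle the second statement and then recover the first from it.

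For $\mathbf{p}_k(\mathbb{F}_{q^a}^\ast)$, I would fix a generator $\zeta$ of the cyclic group $\mathbb{F}_{q^a}^\ast$, whose order is $n=q^a-1$, and rewrite
\[
\mathbf{p}_k(\mathbb{F}_{q^a}^\ast)=\sum_{j=0}^{n-1}\zeta^{jk}.
\]
If $n\mid k$, then $\zeta^k=1$, so every summand equals $1$ and the total is $n=q^a-1\equiv -1$ in $\bar{\mathbb{F}}_q$. If $n\nmid k$, then $\zeta^k\neq 1$, and the geometric series collapses to $(\zeta^{kn}-1)/(\zeta^k-1)=0$ since $\zeta^{kn}=1$. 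This gives the second formula exactly.

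For $\mathbf{p}_k(\mathbb{F}_{q^a})$, the case $k=0$ is a direct count: the sum is $q^a$, which is zero in $\bar{\mathbb{F}}_q$ since $p=\op{char}\bar{\mathbb{F}}_q$ divides $q^a$. For $k\geq 1$, the extra term $0^k$ vanishes, so $\mathbf{p}_k(\mathbb{F}_{q^a})=\mathbf{p}_k(\mathbb{F}_{q^a}^\ast)$ and the first formula is inherited from the second.

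There is essentially no obstacle here; the only subtlety is bookkeeping around $k=0$ (where the two sums differ by $0^0=1$) and remembering that $q^a\equiv 0$ while $q^a-1\equiv -1$ in the ambient field of characteristic $p$. Everything else is immediate from the cyclicity of $\mathbb{F}_{q^a}^\ast$.
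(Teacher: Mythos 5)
Your proof is correct and complete: the reduction to the cyclic group $\mathbb{F}_{q^a}^\ast$, the geometric-series evaluation, and the bookkeeping at $k=0$ (where $q^a\equiv 0$ and $q^a-1\equiv -1$ in characteristic $p$) are exactly what is needed. The paper states this lemma without proof as a well-known fact, and your argument is the standard one.
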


\begin{lemma}[{\cite[5.1]{Haboush}}]\label{Binomial}
Assume that $p$ is a prime. Let $m, n$ be two positive integers with $p$-adic expansion
$$m=\sum_{i}a_i p^i~~~~~ \text{and} ~~~~~n=\sum_{i}b_i p^i.$$
Then we have
$$\left(m\atop n\right)\equiv \prod_{i}\left(a_i\atop b_i\right) \quad(\op{mod}p) .$$
In particular, $p|\left(m\atop n\right)$ if and only if there exists $i$ such that
$a_i <b_i$.
\end{lemma}

\begin{lemma}\label{A1}
Let $i\in I$, and let $M$ be a $\Bbbk{\bf G}$-module and $0\neq\xi\in M^{{\bf U}_{\alpha_i}}$. Assume that there is an integer $0<m<q^a$ such that $h_i(t)\xi=t^{-m}\xi$ for any $t\in\Bbbk^*$. Then $\Bbbk{\bf G}\xi=\Bbbk{\bf G}\underline{U_{\alpha_i,q^a}}\dot{s_i}\xi$.
\end{lemma}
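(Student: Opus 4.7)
The plan is to reduce to the rank-one subgroup ${\bf G}_i\cong SL_2(\bar{\mathbb{F}}_q)$ and establish the result by an explicit $SL_2$-computation using the hypothesis $0<m<q^a$ in an essential way. The inclusion $\Bbbk{\bf G}\cdot\underline{U_{\alpha_i,q^a}}\dot{s_i}\xi\subseteq\Bbbk{\bf G}\xi$ is trivial. For the reverse, using $\dot{s_i}^{-1}\varepsilon_{\alpha_i}(t)\dot{s_i}=\varepsilon_{-\alpha_i}(-t)$ we rewrite $\underline{U_{\alpha_i,q^a}}\dot{s_i}\xi=\dot{s_i}\underline{U_{-\alpha_i,q^a}}\xi$, so with $\mu:=\underline{U_{-\alpha_i,q^a}}\xi$ it suffices to prove $\xi\in\Bbbk{\bf G}_i\mu$, a problem entirely inside $SL_2(\bar{\mathbb{F}}_q)$.

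Putting $\zeta:=\dot{s_i}^{-1}\xi$, which has ${\bf T}_i$-weight $m$ (via $h_i(t)\dot{s_i}^{-1}=\dot{s_i}^{-1}h_i(t^{-1})$) and is fixed by ${\bf U}_{-\alpha_i}$, the Bruhat identity $\varepsilon_{-\alpha_i}(t)\xi=t^{-m}\varepsilon_{\alpha_i}(t^{-1})\zeta$ for $t\neq 0$ gives the explicit expansion
\begin{equation*}
\mu=\xi+\sum_{u\in\mathbb{F}_{q^a}^\ast}u^m\varepsilon_{\alpha_i}(u)\zeta.
\end{equation*}
Applying $\dot{s_i}$ further yields $\dot{s_i}\mu=(-1)^m\underline{U_{\alpha_i,q^a}}\zeta=:(-1)^m\nu$, so $\nu\in\Bbbk{\bf G}_i\mu$. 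A complementary Bruhat computation gives $\varepsilon_{-\alpha_i}(t)\varepsilon_{\alpha_i}(-t^{-1})\zeta=(-1)^m t^m\xi$ for $t\neq 0$, which shows that for $t\in\mathbb{F}_{q^a}^\ast$, the element $\varepsilon_{-\alpha_i}(t)\nu$ contains a single $\xi$-contribution $(-1)^m t^m\xi$ coming from the cell-crossing $s=-t^{-1}$ in $\nu$, together with terms of the form $\zeta$ and $\varepsilon_{\alpha_i}(r)\zeta$ with $r\in\mathbb{F}_{q^a}^\ast$.

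The main task is to form an appropriate $\Bbbk$-linear combination of the $\varepsilon_{-\alpha_i}(t)\nu$ -- possibly composed with further applications of $\dot{s_i}$, $\varepsilon_{\pm\alpha_i}(r)$, and ${\bf T}_i$-elements -- so that, via the power-sum identities of Lemma~\ref{powsum}, the non-$\xi$ contributions cancel against each other (or against $\mu$ and $\nu$ themselves, which already lie in the cyclic submodule under consideration), leaving a nonzero multiple of $\xi$. The condition $0<m<q^a$ is essential: it makes ${\bf p}_{q^a-1}(\mathbb{F}_{q^a}^\ast)=-1$ the unique nonzero power sum occurring among the degrees in play, while the others vanish in $\Bbbk=\bar{\mathbb{F}}_q$.

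The main obstacle is that the most natural combination $\sum_{t\in\mathbb{F}_{q^a}^\ast}t^{q^a-1-m}\varepsilon_{-\alpha_i}(t)\nu$ turns out to reproduce $(-1)^{m+1}\mu$ itself, owing to the intrinsic $\dot{s_i}$-symmetry between $\mu$ and $\nu$; in particular, the naive ${\bf T}_i$-weight-space projection onto $V_{-m}=\Bbbk\xi$ vanishes because its coefficient is the power sum ${\bf p}_0(\mathbb{F}_{q^a}^\ast)=-1$ cancelling the $\xi$-term. Breaking this symmetry requires the multi-stage construction alluded to by the authors as their ``new and highly nontrivial technique'': one interleaves $\varepsilon_{\pm\alpha_i}$-operators with $\dot{s_i}$-conjugations and carefully chosen scalar coefficients over $\mathbb{F}_{q^a}$ to extract a nontrivial weight-$(-m)$ vector, which is forced to be a nonzero scalar multiple of $\xi$.
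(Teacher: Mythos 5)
Your initial reductions are correct and in the same spirit as the paper: the equality $\underline{U_{\alpha_i,q^a}}\dot{s_i}\xi=\dot{s_i}\underline{U_{-\alpha_i,q^a}}\xi$, the Bruhat expansion $\mu=\xi+\sum_{u\in\mathbb{F}_{q^a}^*}u^m\varepsilon_{\alpha_i}(u)\zeta$, the observation that $\dot{s_i}\mu=(-1)^m\nu$, and the identity $\varepsilon_{-\alpha_i}(t)\varepsilon_{\alpha_i}(-t^{-1})\zeta=(-1)^mt^m\xi$ all check out. You also correctly identify that a naive ${\bf T}_i$-weight projection using operators over $\mathbb{F}_{q^a}$ alone goes in a circle.

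However, the proposal stops precisely at the point where the actual argument begins, and the final paragraph is a description of a strategy rather than a proof. The paper's resolution rests on three concrete devices that you never supply. First, one enlarges the field: multiplying by a sum of coset representatives of $U_{\alpha_i,q^a}$ in $U_{\alpha_i,q^{2a}}$ gives $\underline{U_{\alpha_i,q^{2a}}}\dot{s_i}\xi\in V$, and $\dot{s_i}^{-1}$ of that expands as $\xi+(-1)^m\sum_{t\in\mathbb{F}_{q^{2a}}^*}t^m\varepsilon_{\alpha_i}(t)\dot{s_i}\xi$, so it suffices to show the second summand (which is $(-1)^m\mu_m$ with $\mu_k:=\sum_{b\in\mathbb{F}_{q^{2a}}}b^k\varepsilon_{\alpha_i}(b)\dot{s_i}\xi$) lies in $V$. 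Second, and crucially, one introduces a square root $c_t\in\mathbb{F}_{q^{4a}}^*$ of each $t\in\mathbb{F}_{q^{2a}}^*$ and applies operators of the form $\sum_{t}t^k c_t^{-m}h_i(c_t)$: the torus element $h_i(c_t)$ rescales $\varepsilon_{\alpha_i}(b)$ to $\varepsilon_{\alpha_i}(tb)$ while the scalar $c_t^{-m}$ cancels the weight shift on $\dot{s_i}\xi$. This converts $v_1$ into the fixed element $\mu_{q^a-1}$ and, applied after translating by $\varepsilon_{\alpha_i}(-1)$, produces $-\binom{q^a-1}{m}\mu_m$. Third, one needs $\binom{q^a-1}{m}\not\equiv0\pmod p$ for $0<m<q^a$, which follows from Lucas' theorem because every base-$p$ digit of $q^a-1$ is $p-1$; this is exactly where the hypothesis $0<m<q^a$ enters, and your write-up never uses it for anything beyond the vanishing of some power sums. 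Without the square-root/Lucas mechanism the claimed ``multi-stage construction'' remains a black box, so the proof is incomplete as written.
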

\begin{proof}
Let $V=\Bbbk{\bf G}\underline{U_{\alpha_i,q^a}}\dot{s_i}\xi$ and $v_1:=\underline{U_{\alpha_i,q^a}}\dot{s_i}\xi$. We have to show that $\xi\in V$. Multiplying $v_1$ by the sum of the representatives of the cosets of $U_{\alpha_i,q^a}$ in $U_{\alpha_i,q^{2a}}$, we obtain $v_2:=\underline{U_{\alpha_i,q^{2a}}}\dot{s_i}\xi\in V$, and hence
\begin{equation}\label{siv}
\dot{s_i}^{-1}v_2=\xi+\sum_{t\in\mathbb{F}_{q^{2a}}^*}t^{-m}\varepsilon_{\alpha_i}(-t^{-1})\dot{s_i}\xi=
\xi+(-1)^m\sum_{t\in\mathbb{F}_{q^{2a}}^*}t^m\varepsilon_{\alpha_i}(t)\dot{s_i}\xi\in V
\end{equation}
by (\ref{s-1us}). Choose a square root $c_t\in\mathbb{F}_{q^{4a}}^*$ for each $t\in\mathbb{F}_{q^{2a}}^*$. Observe that
$$
\aligned
\sum_{t\in\mathbb{F}_{q^{2a}}^*}t^{q^a-1}c_t^{-m}h_i(c_t)v_1
&\ =\sum_{{b\in\mathbb{F}_{q^a}}\atop{t\in\mathbb{F}_{q^{2a}}^*}}t^{q^a-1}c_t^{-m}h_i(c_t)\varepsilon_{\alpha_i}(b)\dot{s_i}\xi\\
&\ =\sum_{{b\in\mathbb{F}_{q^a}}\atop{t\in\mathbb{F}_{q^{2a}}^*}}t^{q^a-1}\varepsilon_{\alpha_i}(bt)\dot{s_i}\xi\\
&\ ={\bf p}_{q^a-1}(\mathbb{F}_{q^{2a}}^*)\dot{s_i}\xi+\sum_{{b\in\mathbb{F}_{q}^*}\atop{t\in\mathbb{F}_{q^{2a}}^*}}t^{q^a-1}\varepsilon_{\alpha_i}(bt)\dot{s_i}\xi\\
&\ =-\dot{s_i}\xi+ \sum_{c\in\mathbb{F}_{q^{2a}}^*}{\bf p}_{q^a-1}(\mathbb{F}_{q^a}^*)c^{q^a-1}\varepsilon_{\alpha_i}(c)\dot{s_i}\xi\\
&\ =-\sum_{c\in\mathbb{F}_{q^{2a}}}c^{q^a-1}\varepsilon_{\alpha_i}(c)\dot{s_i}\xi.
\endaligned
$$
It follows that $v_3:=\displaystyle\sum_{c\in\mathbb{F}_{q^{2a}}}c^{q^a-1}\varepsilon_{\alpha_i}(c)\dot{s_i}\xi\in V$. Let $\mu_k=\displaystyle\sum_{b\in\mathbb{F}_{q^{2a}}}b^k\varepsilon_{\alpha_i}(b)\dot{s_i}\xi$. Similarly, we have
\begin{equation}\label{tctau}
\sum_{t\in\mathbb{F}_{q^{2a}}^*}t^mc_t^{-m}h_i(c_t)\mu_k
=\sum_{{b\in\mathbb{F}_{q^{2a}}}\atop{t\in\mathbb{F}_{q^{2a}}^*}}b^kt^m\varepsilon_{\alpha_i}(bt)\dot{s_i}\xi
=\sum_{c\in\mathbb{F}_{q^{2a}}}{\bf p}_{k-m}(\mathbb{F}_{q^{2a}}^*)c^m\varepsilon_{\alpha_i}(c)\dot{s_i}\xi,
\end{equation}
which is nonzero ($=-\mu_m$) if and only if $k=m$ by Lemma \ref{powsum}.
It follows that
$$\sum_{t\in\mathbb{F}_{q^{2a}}^*}t^mc_t^{-m}h_i(c_t)\varepsilon_{\alpha_i}(-1)v_3
=-\left({q^a-1}\atop m\right)\mu_m$$
by (\ref{tctau}) and Lemma \ref{powsum}, and hence $\mu_m\in V$ since $\displaystyle\left({q^a-1}\atop m\right)\neq0~(\op{mod}p)$ by Lemma \ref{Binomial} and our assumption. Combining this and (\ref{siv}) yields $\xi\in V$ which completes the proof.
\end{proof}

\begin{remark}\label{remA1}
{\rm More precisely, the proof of Lemma \ref{A1} tells us that $$\xi\in\Bbbk G_{i,q^{4a}}\underline{U_{\alpha_i,q^a}}\dot{s_i}\xi$$ $($keeping the notation in Lemma \ref{A1}$)$. On the contrary,  $\xi\not\in\Bbbk G_{i,q^a}\underline{U_{\alpha_i,q^a}}\dot{s_i}\xi$ in general. For example, let ${\bf G}=SL_2(\bar{\mathbb{F}}_q)$ and $\theta$  be a nontrivial character of ${\bf T}$. Let $\xi={\bf 1}_\theta$. Then $\underline{U_{q^a}}\dot{s}\xi$ only generates the (unique) simple socle of $\Bbbk G_{q^a}\xi$ as $\Bbbk G_{q^a}$-modules by \cite[Theorem 4.6]{YY}.}
\end{remark}

Similar to \cite[Lemma 4.5]{CD2}, we have the following key lemma whose proof is identical to that of \cite[Lemma 4.5]{CD2} as long as one replace $C_J$ there with $D(\theta)_J$.
\begin{lemma}\label{key}
Let $w\in Z_J$ and $A=\{\alpha_1,\alpha_2,\dots, \alpha_m\}$ and $B=\{\beta_1,\beta_2,\dots, \beta_n\}$ be two disjoint subsets of $\Phi_{w_Jw^{-1}}^-$, and assume that $\sum_il_i\alpha_i\in A$ whenever $\sum_il_i\alpha_i\in\Phi$ for some $l_i\in\mathbb{Z}_{\geq0}$.
Let $a<b$ be integers with $a|b$, and denote
$$\delta:=\underline{U_{\alpha_1,q^b}}\cdots\underline{U_{\alpha_m,q^b}}\cdot
\underline{U_{\beta_1,q^a}}\cdots\underline{U_{\beta_n,q^a}}\dot{w}D(\theta)_J.$$

\noindent We have

\noindent$\op{(i)}$ Assume that $k\beta_1+\sum_il_i\alpha_i\in A$ whenever $k\beta_1+\sum_il_i\alpha_i\in\Phi$ for some $k\in\mathbb{Z}_{>0}$ and $l_i\in\mathbb{Z}_{\geq0}$. Then
$$x\delta=\underline{U_{\alpha_1,q^b}}\cdots\underline{U_{\alpha_m,q^b}}\cdot
x \underline{U_{\beta_1,q^a}}\cdots\underline{U_{\beta_n,q^a}}\dot{w}D(\theta)_J$$
for any $x\in U_{\beta_1,q^b}$.

\noindent$\op{(ii)}$ Let $\gamma\in\Phi_{w_Jw^{-1}}^+$. Assume that $k\gamma+\sum_il_i\alpha_i+\sum_im_i\beta_i\in A$ whenever $k\gamma+\sum_il_i\alpha_i+\sum_im_i\beta_i\in\Phi_{w_Jw^{-1}}^-$ for some $k\in\mathbb{Z}_{>0}$ and $l_i,m_i\in\mathbb{Z}_{\geq0}$. Then $y\delta=\delta$ for any $y\in U_{\gamma,q^b}$.
\end{lemma}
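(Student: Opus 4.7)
The plan is to adapt the proof of \cite[Lemma~4.5]{CD2} line by line, with $C_J$ there replaced by $D(\theta)_J$; the substitution is benign because $D(\theta)_J$ satisfies the same $W_J$-antisymmetric identities $\dot{s_i}D(\theta)_J=-D(\theta)_J$ for $i\in J$, and the closure hypothesis on $A$ supplies the same structural input as in \cite{CD2}. The key preliminary observation is that, under the standing closure hypothesis on $A$, after fixing a convex order on $\Phi^+$ that refines the enumeration of $A$, the set-theoretic product $V:=U_{\alpha_1}\cdots U_{\alpha_m}$ is a closed unipotent subgroup of ${\bf U}$, and at the finite level $\underline{V_{q^b}}=\underline{U_{\alpha_1,q^b}}\cdots\underline{U_{\alpha_m,q^b}}$ by unique factorization.

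For (i), the extra hypothesis implies that $A\cup\{\beta_1\}$ is itself closed, so $V\cdot U_{\beta_1}$ is a unipotent subgroup in which $V$ is normal: the commutators $[U_{\beta_1},V]$ lie in root subgroups $U_\delta$ with $\delta=k\beta_1+\sum l_i\alpha_i\in\Phi$ ($k\geq1$, at least one $l_i\geq1$), and all such $\delta$ belong to $A$ by hypothesis. Normality yields $xV_{q^b}x^{-1}=V_{q^b}$, and hence $x\underline{V_{q^b}}=\underline{V_{q^b}}x$ for every $x\in U_{\beta_1,q^b}$. Right-multiplication of this identity by $\underline{U_{\beta_1,q^a}}\cdots\underline{U_{\beta_n,q^a}}\dot{w}D(\theta)_J$ produces exactly the identity claimed in (i).

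For (ii), let $y\in U_{\gamma,q^b}$. Commuting $y$ to the right past $\underline{V_{q^b}}$ and then past each $\underline{U_{\beta_j,q^a}}$ in turn, the Chevalley commutator formula inserts elements of $U_{\delta,q^b}$ (parameters in $\mathbb{F}_{q^b}$, using $a\mid b$) for each root $\delta=k\gamma+\sum l_i\alpha_i+\sum m_j\beta_j$ with $k\geq1$. The hypothesis of (ii) places every such $\delta$ either in $A$ (when $\delta\in\Phi_{w_Jw^{-1}}^-$), in which case the element is absorbed into $\underline{V_{q^b}}$ via the group structure of $V_{q^b}$, or in $\Phi_{w_Jw^{-1}}^+$, in which case $U_\delta\subset{\bf U}'_{w_Jw^{-1}}$. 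It then remains to verify that every $u\in U_{\delta,q^b}$ with $\delta\in\Phi_{w_Jw^{-1}}^+$, including $y$ itself (corresponding to $\delta=\gamma$), satisfies $u\dot{w}D(\theta)_J=\dot{w}D(\theta)_J$.

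This last fixed-point verification is the main obstacle. Since $\dot{w}^{-1}u\dot{w}\in U_{w^{-1}\delta}$ with $w^{-1}\delta\in w_J\Phi^+=(\Phi^+\setminus\Phi_J^+)\cup\Phi_J^-$, one splits into two subcases: when $w^{-1}\delta\in\Phi^+\setminus\Phi_J^+$, the element lies in ${\bf U}\cap{\bf P}_{J'}$ and acts on ${\bf 1}_{\theta,J'}$ through $\theta$, trivially because $\theta$ vanishes on unipotent elements of ${\bf L}_{J'}$; when $w^{-1}\delta\in\Phi_J^-$, the element lies in the Levi ${\bf L}_J$, and the alternating relations $\dot{s_i}D(\theta)_J=-D(\theta)_J$ ($i\in J\subset I(\theta)$) cause the induced action on the signed sum $D(\theta)_J=\sum_{w'\in W_J}(-1)^{\ell(w')}\dot{w'}{\bf 1}_{\theta,J'}$ to telescope pairwise across the $W_J$-orbit. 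This combinatorial case analysis is precisely the argument of \cite[Lemma~4.5]{CD2}, which transcribes to our setting verbatim under the stated substitution.
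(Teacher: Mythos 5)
Your overall route is the same as the paper's, which simply transcribes the proof of \cite[Lemma 4.5]{CD2} with $C_J$ replaced by $D(\theta)_J$: part (i) via normality of $V=\prod_{\alpha\in A}{\bf U}_\alpha$ inside $V{\bf U}_{\beta_1}$ (your argument here is correct, including the passage to $\mathbb{F}_{q^b}$-points), and part (ii) via repeated use of the Chevalley commutator formula, the new input being that the inserted elements whose roots lie in $\Phi^+_{w_Jw^{-1}}$ fix $\dot{w}D(\theta)_J$. The bookkeeping you compress in (ii) (commutator factors with roots in $A$ are created to the right of the $\beta$-factors and must be pushed back into $V_{q^b}$, the process terminating by height considerations) is standard and acceptable in a sketch.

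The genuine problem is in your final fixed-point verification, which is precisely the step where $D(\theta)_J$ differs from $C_J$ and needs care. Your case $w^{-1}\delta\in\Phi_J^-$ cannot occur: since $w\in Z_J\subset X_J$ we have $w(\Phi_J^+)\subset\Phi^+$, hence $w(\Phi_J^-)\subset\Phi^-$, while $\delta\in\Phi^+$; so necessarily $w^{-1}\delta\in\Phi^+\setminus\Phi_J^+$. Worse, the ``telescoping'' you assert in that case is not a correct statement: for $j\in J$ and $1\neq u\in{\bf U}_{-\alpha_j}$ one has $uD(\theta)_J\neq D(\theta)_J$ in general (already for $SL_2$ with $\theta$ trivial, $\varepsilon_{-\alpha}(t)({\bf 1}-\dot{s}{\bf 1})=\varepsilon_{\alpha}(t^{-1})\dot{s}{\bf 1}-\dot{s}{\bf 1}$), so if that case were nonempty your argument would fail there. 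In the case that actually occurs your justification is also incomplete: the element $\dot{w}^{-1}u\dot{w}\in{\bf U}_{w^{-1}\delta}$ acts on the whole sum $\sum_{w'\in W_J}(-1)^{\ell(w')}\dot{w'}{\bf 1}_{\theta,J'}$, not only on ${\bf 1}_{\theta,J'}$; one must conjugate it further past each $\dot{w'}$ and use that $W_J$ permutes $\Phi^+\setminus\Phi_J^+$, so that $\dot{w'}^{-1}\dot{w}^{-1}u\dot{w}\dot{w'}\in{\bf U}\subset{\bf P}_{J'}$, where the character $\theta$ of ${\bf P}_{J'}$ kills every unipotent element (it kills ${\bf U}_{J'}$ by definition and ${\bf U}\cap{\bf L}_{J'}\subset[{\bf L}_{J'},{\bf L}_{J'}]$). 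With this correction the needed claim $u\dot{w}D(\theta)_J=\dot{w}D(\theta)_J$ for all $u\in{\bf U}_\delta$, $\delta\in\Phi^+_{w_Jw^{-1}}$, holds, and your outline of (i) and (ii) then goes through.
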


Now we return to the main step of the proof. The Theorem \ref{antidominant} follows from the following two technical results.
\begin{lemma}\label{c1}
Let $M$ be a nonzero submodule  of $E(\theta)_J'$. Then $$\underline{U_{w_Jw^{-1},q^c}}\dot{w}D(\theta)_J\in M$$ for some $w\in Z_J$ and $c>0$.
\end{lemma}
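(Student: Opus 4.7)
The plan is to extract from $M$ a nonzero $U_{q^a}$-invariant, $T_{q^a}$-eigenvector via a defining-characteristic argument, and then pin down its shape using the antidominance of $\theta$ together with the combinatorial content of Lemma \ref{key}. First, pick $0 \ne \xi \in M$; since the basis of Proposition \ref{basis} is $\mathbb{F}_{q^a}$-rational for $a$ large, $\xi$ lies in the finite-dimensional subspace $N_a := \Bbbk G_{q^a}D(\theta)_J$. The cyclic $\Bbbk G_{q^a}$-module $\Bbbk G_{q^a}\xi$ contains a simple submodule $L$, and because $\op{char}\Bbbk$ is the defining characteristic, $L^{U_{q^a}}$ is one-dimensional and affords a $T_{q^a}$-character. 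This produces a nonzero $\eta \in M \cap N_a^{U_{q^a}}$ that is a $T_{q^a}$-eigenvector.

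Next, I would unravel $\eta$ in the basis $\{u\dot{w}D(\theta)_J : w \in Z_J,\ u \in U_{w_Jw^{-1}, q^a}\}$. For each $w$, the factorisation $U_{q^a} = U_{w_Jw^{-1}, q^a}' \cdot U_{w_Jw^{-1}, q^a}$ translates $U_{q^a}$-invariance into linear constraints on the coefficients. Invariance under $U_{w_Jw^{-1}, q^a}$, which acts by left translation on $u$, forces the $w$-block of $\eta$ to be a scalar multiple of $\underline{U_{w_Jw^{-1}, q^a}}\dot{w}D(\theta)_J$; invariance under $U_{w_Jw^{-1}, q^a}'$ is controlled by Lemma \ref{key}, producing at most triangular corrections supported on $w'$ with $\ell(w') < \ell(w)$. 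After clean-up, $\eta$ takes the form $\sum_{w \in Z_J} \lambda_w\, \underline{U_{w_Jw^{-1}, q^a}}\dot{w}D(\theta)_J$.

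Finally, antidominance enters through weight separation. Each summand $\underline{U_{w_Jw^{-1}, q^c}}\dot{w}D(\theta)_J$ is a $T$-eigenvector of weight $\theta^w$; antidominance of $\theta$ makes $W_\theta$ parabolic, so Lemma \ref{Wtheta}\,$\op{(ii)}$ gives $W_\theta = W_{I(\theta)}$, and hence the weights $\theta^w$ distinguish the $W_{I(\theta)}$-cosets of $Z_J$. Enlarging $c$ so that these weights remain distinct on $T_{q^c}$, Lemma \ref{eigenvector} splits $\eta$ into pieces, each supported on a single $W_{I(\theta)}$-coset. A further reduction in the spirit of Lemma \ref{claim2} (using $\tau_i$-type operators to move between cosets) then isolates a single $w \in Z_J$, giving the required element of $M$ after rescaling.

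The main obstacle will be the second step: the action of $U_{w_Jw^{-1}, q^a}'$ does not preserve the basis and interacts nontrivially with the defining relations $\dot{s_i}\dot{w}D(\theta)_J = -\dot{w}D(\theta)_J$ of $E(\theta)_J'$ for $i \in J$. Lemma \ref{key} is designed precisely to tame this, but careful bookkeeping of the lower-order error terms so that they do not obstruct the final weight-separation will be essential, and the conversion machinery of Lemma \ref{A1} between $U_{\alpha_i}$-fixed vectors and their $\underline{U_{\alpha_i, q^c}}\dot{s_i}$-images may well be needed in handling the last reduction step.
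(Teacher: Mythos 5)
Your first two steps match the paper's: a nonzero element of $M$ lies in $\Bbbk G_{q^a}D(\theta)_J$ for some $a$, and since $U_{q^a}$ is a $p$-group one gets a nonzero $U_{q^a}$-fixed vector in $M$ (the paper cites \cite[Proposition 26]{Se} rather than passing through a simple submodule, but that is cosmetic), which by Proposition \ref{basis} must have the form $\sum_{w\in Z_J}c_w\,\underline{U_{w_Jw^{-1},q^a}}\dot{w}D(\theta)_J$. The problem is your final step. Weight separation via Lemma \ref{eigenvector} can only split this sum according to the characters $\theta^w|_{T_{q^a}}$ (note these vectors are $T_{q^a}$-eigenvectors, not $T$-eigenvectors, since $tU_{\alpha,q^a}t^{-1}\neq U_{\alpha,q^a}$ for general $t\in{\bf T}$), i.e.\ according to the cosets $W_\theta w$; it gives you no purchase whatsoever on the genuinely hard case of several $w$ lying in the same coset, all contributing the same weight. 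That residual case is the entire content of the lemma, and your plan for it --- ``$\tau_i$-type operators in the spirit of Lemma \ref{claim2}'' --- does not transfer to defining characteristic. The Section~2 machinery operates on the bare vectors $\dot{w}D(\theta)_J$, which one reaches from $\underline{U_{q^a}}$-sums only via Lemma \ref{CDY-lemma}; that lemma's proof divides by powers of $q$ and fails when $\op{char}\Bbbk=p$. In defining characteristic there is no way to strip off the factors $\underline{U_{w_Jw^{-1},q^a}}$ (if there were, Lemma \ref{c2} and the whole of Section~3 would be unnecessary).

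What the paper does instead, and what is missing from your proposal, is the displayed claim in its proof (proved as in \cite[Proposition 4.3]{CD2}): order the roots of $\Phi_\xi$ by decreasing height, and induct by enlarging the field of definition of one root subgroup at a time, replacing $\underline{U_{\gamma_i,q^a}}$ by $\underline{U_{\gamma_i,q^b}}$. Lemma \ref{key} controls the commutators so that the enlarged sums slide past each other, and the identity $\underline{U_{\gamma,q^b}}\cdot\underline{U_{\gamma,q^a}}=q^a\,\underline{U_{\gamma,q^b}}=0$ in $\Bbbk$ is what actually kills the unwanted terms $w$ and isolates a single summand. This vanishing-of-$q^a$ mechanism is the defining-characteristic replacement for the $\tau_i$-triangularity argument, and without it (or an equivalent device) your proof cannot be completed.
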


\begin{lemma}\label{c2}
Let $k\in I$ and $w\in Z_J$, and assume that $s_kw\in Z_J$ and $s_kw>w$. Then
$\underline{U_{w_Jw^{-1},q^{ah}}}\dot{w}D(\theta)_J\in\Bbbk{\bf G}\underline{U_{w_Jw^{-1}s_k,q^a}}\dot{s_k}\dot{w}D(\theta)_J$ for some $h>0$.
\end{lemma}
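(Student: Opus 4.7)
The plan is to reduce the statement to an application of Lemma \ref{A1} (in the finite-level form of Remark \ref{remA1}) to the vector $\eta_b:=\underline{U_{w_Jw^{-1},q^b}}\dot{w}D(\theta)_J$ for a suitable $b$ of the form $ah$. First I would rewrite the generator on the right-hand side using $\dot{s_k}$-conjugation of root subgroups. Since $s_kw>w$, one has $w^{-1}\alpha_k\in\Phi^+$, and the length identity $\ell(w_Jw^{-1}s_k)=\ell(w_Jw^{-1})+1$ forces $w_Jw^{-1}\alpha_k\in\Phi^+$. Comparing the roots indexing $U_{w_Jw^{-1}s_k}$ with those of $U_{w_Jw^{-1}}$, one sees that conjugation by $\dot{s_k}^{-1}$ sends $U_{w_Jw^{-1}s_k}$ onto the product $U_{-\alpha_k}\cdot U_{w_Jw^{-1}}$, giving
$$\dot{s_k}^{-1}\underline{U_{w_Jw^{-1}s_k,q^a}}\dot{s_k}=\underline{U_{-\alpha_k,q^a}}\,\underline{U_{w_Jw^{-1},q^a}}.$$
Setting $X:=\underline{U_{w_Jw^{-1}s_k,q^a}}\dot{s_k}\dot{w}D(\theta)_J$, we therefore have $\dot{s_k}^{-1}X=\underline{U_{-\alpha_k,q^a}}\eta_a\in\Bbbk{\bf G}X$.

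Next I would establish the two input conditions needed for Lemma \ref{A1}. The $U_{\alpha_k,q^b}$-invariance of $\eta_b$ follows from Lemma \ref{key}(ii) applied with $A=\Phi^-_{w_Jw^{-1}}$ (so its hypothesis on $A$ is tautological), $B=\emptyset$, and $\gamma=\alpha_k\in\Phi^+_{w_Jw^{-1}}$. The $h_k$-eigenvalue is obtained by direct conjugation computation: for $t\in\mathbb{F}_{q^b}^*$, $h_k(t)$ permutes each $U_{\beta,q^b}$ appearing in the product, and hence $h_k(t)\eta_b=\theta^w(h_k(t))\eta_b=t^{-m}\eta_b$ with $m:=-\langle\theta,w^{-1}\alpha_k^\vee\rangle$. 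Antidominance of $\theta$ together with $w^{-1}\alpha_k\in\Phi^+$ gives $m\geq 0$, and I would choose $h$ such that $4a\mid ah$ and $q^a>m$. Running the argument of Lemma \ref{A1} with $\xi=\eta_{ah}$ (noting that, by Remark \ref{remA1}, the proof in fact only uses $U_{\alpha_k,q^{4a}}$-invariance and the eigenvalue formula on $\mathbb{F}_{q^{4a}}^*$) then yields
$$\eta_{ah}\in\Bbbk G_{k,q^{4a}}\,\underline{U_{\alpha_k,q^a}}\dot{s_k}\eta_{ah}=\Bbbk{\bf G}\,\dot{s_k}\underline{U_{-\alpha_k,q^a}}\eta_{ah}.$$

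The final step is to show $\underline{U_{-\alpha_k,q^a}}\eta_{ah}\in\Bbbk{\bf G}\,\underline{U_{-\alpha_k,q^a}}\eta_a$, which combined with the two displays finishes the proof. I would expand $\eta_{ah}=\sum_{r\in R}r\eta_a$ for a set $R$ of coset representatives of $U_{w_Jw^{-1},q^a}$ in $U_{w_Jw^{-1},q^{ah}}$, and then commute $\underline{U_{-\alpha_k,q^a}}$ past each $r$ via Chevalley's commutator formula; the nontrivial contributions lie in root subgroups $U_{i\beta-j\alpha_k}$ for $\beta\in\Phi^-_{w_Jw^{-1}}$, and they get absorbed by further applications of Lemma \ref{key}. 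This last absorption is where the main difficulty lies: the interaction between the opposite root subgroup $U_{-\alpha_k}$ and the deeper levels of $U_{w_Jw^{-1}}$ must be tracked delicately in positive characteristic, since the group-order averaging trick available in the cross-characteristic case of Lemma \ref{CDY-lemma} is now unavailable. The borderline case $m=0$ (which can arise when $w^{-1}\alpha_k$ is supported in $I(\theta)\setminus J$) will require a separate treatment, presumably by a direct rank-one computation in ${\bf G}_k$.
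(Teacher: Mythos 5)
Your outline reproduces, in slightly different bookkeeping, only the easy half of the paper's argument: the case where $\dot{s_k}$ normalizes ${\bf U}_{w_Jw^{-1}}$. There the paper shows $w_Jw^{-1}\alpha_k=\alpha_l$ for some $l\in I\setminus I(\theta)$ (using $s_kw\in Z_J$), so $m=-\langle\theta,(w^{-1}\alpha_k)^\vee\rangle=-\langle\theta,\alpha_l^\vee\rangle>0$ and Lemma \ref{A1} applies after enlarging the level inside ${\bf U}_{w_Jw^{-1}}$, which is harmless precisely because $U_{w_Jw^{-1},q^{4a}}$ is ${\bf G}_{k,q^{4a}}$-stable. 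Outside that case your plan has two gaps, and the first is fatal as stated. The exponent $m$ vanishes whenever the coroot of $w^{-1}\alpha_k$ is supported on $I(\theta)$; for $\theta=\op{tr}$ it vanishes for \emph{every} $k$ and $w$, so $m=0$ is not a borderline degeneracy but the main case (it contains the entire permutation-module situation of \cite{CD2}). Moreover no ``direct rank-one computation in ${\bf G}_k$'' can handle it: if $m=0$ and $\xi$ is $U_{\alpha_k,q^a}$-fixed, then $\underline{U_{\alpha_k,q^a}}\dot{s_k}\xi=-\underline{U_{\alpha_k,q^a}}(\xi-\dot{s_k}\xi)$ because $q^a=0$ in $\Bbbk$, so $\Bbbk{\bf G}_k\underline{U_{\alpha_k,q^a}}\dot{s_k}\xi\subset\Bbbk{\bf G}_k(\xi-\dot{s_k}\xi)$, and already for ${\bf G}_k\cong SL_2$ acting on $\Bbbk{\bf G}_k\otimes_{\Bbbk({\bf B}\cap{\bf G}_k)}\op{tr}$ the submodule generated by $\xi-\dot{s_k}\xi$ is proper and does not contain $\xi$. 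The conclusion of Lemma \ref{c2} in this situation can only be extracted from the interaction with the other root subgroups of ${\bf U}_{w_Jw^{-1}}$ inside the ambient group, which is exactly what the paper's second case does.

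The second gap is the ``absorption'' step $\underline{U_{-\alpha_k,q^a}}\eta_{ah}\in\Bbbk{\bf G}\,\underline{U_{-\alpha_k,q^a}}\eta_a$, which you correctly identify as the main difficulty but leave open. The commutators $[\varepsilon_{-\alpha_k}(c),r]$, with $r$ running over coset representatives in $U_{w_Jw^{-1},q^{ah}}$, land in root subgroups attached to roots of the form $-i\alpha_k+j\beta$ with parameters in $\mathbb{F}_{q^{ah}}$; these roots need not lie in $\Phi^-_{w_Jw^{-1}}\cup\Phi^+_{w_Jw^{-1}}$, so Lemma \ref{key} does not absorb them. The paper avoids this interaction entirely by enlarging levels \emph{before} conjugating by $\dot{s_k}$ and one root subgroup at a time: it picks $\gamma$ of maximal height in $s_k\Phi^-_{w_Jw^{-1}}\setminus\Phi^-_{w_Jw^{-1}}$, forms $\Gamma=\{\alpha\in\Phi^-_{w_Jw^{-1}}\cap s_k\Phi^-_{w_Jw^{-1}}\mid\op{ht}(\alpha)\geq\op{ht}(\gamma)\}$ so that $U_{\Gamma,q^b}$ is normal in both $U_{w_Jw^{-1},q^b}$ and $U_{w_Jw^{-1}s_k,q^b}$, and uses Lemma \ref{key} together with the characteristic-$p$ vanishing $q^a\underline{U_{\Gamma,q^{2a}}}\prod_{\alpha\in\Gamma'}\underline{U_{\alpha,q^a}}\dot{w}D(\theta)_J=0$ to kill the unwanted summand and isolate $\sum_{t\in\mathbb{F}_{q^a}^*}\varepsilon_{\alpha_k}(-t^{-1})\underline{(U_{w_Jw^{-1},q^{2a}})^{s_k}}\dot{s_k}\dot{w}D(\theta)_J$, which compared with the level-$q^{2a}$ version of the generator gives $\eta_{2a}\in\Bbbk{\bf G}X$. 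None of this height-ordered, characteristic-$p$ machinery appears in your proposal, so the substance of the lemma remains unproved.
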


\noindent Before proving these lemmas we show how to deduce Theorem \ref{antidominant} from them.
\begin{proof}[Proof of Theorem \ref{antidominant}.]
Let $J\subset I(\theta)$, and $N$ be a nonzero submodule of $E(\theta)_J'$. By Lemma \ref{c1} we have $\underline{U_{w_Jw^{-1},q^a}}\dot{w}D(\theta)_J\in N$ for some $w\in Z_J$ and $a>0$. Applying Lemma \ref{c2} repeatedly yields $\underline{U_{w_J,q^b}}D(\theta)_J\in N$ for some $b>0$.
By \cite[Lemma 2]{St},  since $\op{char}\Bbbk=\op{char}\mathbb{F}_q$, we have
$$\sum_{w\in W_J}(-1)^{\ell(w)}\dot{w}\underline{U_{w_J,q^b}}D(\theta)_J=\sum_{w\in W_J}q^{b\ell(w)}D(\theta)_J= D(\theta)_J \in N.$$
It follows that $N=E(\theta)_J'$ which implies the irreducibility of $E(\theta)_J'$. Finally, Lemma \ref{basis} implies the irreducibility of $E(\theta)_J$.
\end{proof}

\noindent It remains to prove Lemma \ref{c1} and \ref{c2}.
\begin{proof}[{Proof of Lemma \ref{c1}}]
Assume that $M$ is a nonzero submodule of $E(\theta)_J'$ and $0\neq x\in M$. Then $x\in\Bbbk G_{q^a}D(\theta)_J$ for some $a>0$. It is clear that
$$(\Bbbk G_{q^a}x)^{U_{q^a}}\subset(\Bbbk G_{q^a}D(\theta)_J)^{U_{q^a}}\subset\bigoplus_{w\in Z_J}\Bbbk\underline{U_{w_Jw^{-1},q^a}}\dot{w}D(\theta)_J$$
by Proposition \ref{basis}. Moreover, $(\Bbbk G_{q^a}x)^{U_{q^a}}\neq0$ by \cite[Proposition 26]{Se}. That is, some nonzero element
$$\xi=\sum_{w\in Z_J}c_w\underline{U_{w_Jw^{-1},q^a}}\dot{w}D(\theta)_J\in(\Bbbk G_{q^a}x)^{U_{q^a}}\subset M,\quad c_w\in\Bbbk.$$
Let $Y_{\xi}=\{w\in Z_J\mid c_w\neq0\}$. Let $\Phi_{\xi}=\bigcup_{w\in Y_\xi}\Phi_{w_Jw^{-1}}^-$. We fix an order in $\Phi_\xi$ such that $\Phi_\xi=\{\beta_1,\cdots,\beta_m\}$ with $\op{ht}(\beta_1)\geq\cdots\geq\op{ht}(\beta_m)$.

Let $b > a$ be an integer such that $a|b $. For each $w\in Y_\xi$, write $\Phi_{w_Jw^{-1}}^-=\{\gamma_1,\cdots,\gamma_t\}$ with the order inherited from $\Phi_\xi$ (In particular, $\op{ht}(\gamma_1)\geq\cdots\geq\op{ht}(\gamma_t)$). For any $0\leq d\leq t$, set
$$\Theta(w,d,b,a):=\underline{U_{\gamma_1,q^b}}\cdots\underline{U_{\gamma_d,q^b}}\cdot
\underline{U_{\gamma_{d+1},q^a}}\cdots\underline{U_{\gamma_t,q^a}},$$
if $d>0$ and
$$\Theta(w,0,b,a):=\underline{U_{\gamma_1,q^a}}\cdots\underline{U_{\gamma_t,q^a}}.$$
We need the following claim whose proof is identical to that of \cite[Proposition 4.3]{CD2},as long as one replace $C_J$ there with $D(\theta)_J$.

\smallskip
\noindent{\it Let $Y$ be a nonempty subset of $Y_{\xi}$ and $\Phi_Y=\bigcup_{w\in Y}\Phi_{w_Jw^{-1}}^-=\{\alpha_1,\cdots,\alpha_n\}$ with the order inherited from $\Phi_\xi$ $($In particular $\op{ht}(\alpha_1)\geq\cdots\geq\op{ht}(\alpha_n)$$)$, and let $d\geq 0$ be an integer such that $\alpha_1,\dots,\alpha_d\in\bigcap_{w\in Y}\Phi_{w_Jw^{-1}}^-$. If $$\xi_d:=\sum_{w\in Y}c_w\Theta(w,d,b,a)\dot{w}D(\theta)_J\in M$$ for some $b\neq a$ and $a|b$, then $\underline{U_{w_Jw^{-1},q^b}}\dot{w}D(\theta)_J\in M$ for some $w\in Y$}.

\smallskip
The lemma follows immediately from applying the above claim to $Y=Y_\xi$, $d=0$ and $\xi=\xi_d$.
\end{proof}

\begin{proof}[{Proof of Lemma \ref{c2}}]
Let $V=\Bbbk{\bf G}\underline{U_{w_Jw^{-1}s_k,q^a}}\dot{s_k}\dot{w}D(\theta)_J$ and set
\begin{equation}\label{v}
v:=\underline{U_{w_Jw^{-1}s_k,q^a}}\dot{s_k}\dot{w}D(\theta)_J=
\underline{U_{\alpha_k,q^a}}\cdot\underline{(U_{w_Jw^{-1},q^a})^{s_k}}\dot{s_k}\dot{w}D(\theta)_J,
\end{equation}
where the superscript $s_k$ means the subgroup is conjugated by $s_k$.
Multiplying $v$ by the sum of representatives of left cosets of $(U_{w_Jw^{-1},q^a})^{s_k}$ in $(U_{w_Jw^{-1},q^{2a}})^{s_k}$, we have
\begin{equation}\label{any}
\underline{U_{\alpha_k,q^a}}\cdot\underline{(U_{w_Jw^{-1},q^{2a}})^{s_k}}\dot{s_k}\dot{w}D(\theta)_J\in V
\end{equation}
since $\underline{U_{\alpha_k,q^a}}\cdot\underline{(U_{w_Jw^{-1},q^{2a}})^{s_k}}=\underline{(U_{w_Jw^{-1},q^{2a}})^{s_k}}\cdot\underline{U_{\alpha_k,q^a}}$.
Consider the following two cases:

\noindent{\bf Case 1}: $({\bf U}_{w_Jw^{-1}})^{s_k}={\bf U}_{w_Jw^{-1}}$.

\noindent In this case we have $s_kww_J=ww_Js_l$ (equivalently, $w_Jw^{-1}(\alpha_k)=\alpha_l$) for some $l\in I\backslash I(\theta)$ (since $s_kw\in Z_J$). Hence, $\langle\theta,\alpha_l^\vee\rangle<0$. Since $\underline{U_{w_Jw^{-1}s_k,q^{a'}}}\dot{s_k}\dot{w}D(\theta)_J\in V$ if $a|a'$, we can assume that $-\langle\theta,\alpha_l^\vee\rangle<q^a$ without loss of generality. Moreover, we have
\begin{equation}\label{v2}
v=\underline{U_{w_Jw^{-1},q^a}}\cdot\underline{U_{\alpha_k,q^{a}}}\dot{s_k}\dot{w}D(\theta)_J.
\end{equation}
Multiplying $v$ by the sum of representatives of all the left cosets of $U_{w_Jw^{-1},q^a}$ in $\underline{U_{w_Jw^{-1},q^{4a}}}$ to (\ref{v2}), one obtain
\begin{equation}\label{v3}
v_1:=\underline{U_{w_Jw^{-1},q^{4a}}}\cdot\underline{U_{\alpha_k,q^a}}\dot{s_k}\dot{w}D(\theta)_J\in V.
\end{equation}
Noting that when $w'\in W_J$ and $J\subset J(\theta)$, $\theta^{w'}=\theta=\theta^{w_J}$ (Recall the notation in (\ref{theta^w})). So, $h_k(t)ww'{\bf 1}_\theta=t^{\langle \theta^{ww'},\alpha_k^\vee\rangle}ww'{\bf 1}_\theta=t^{\langle \theta^{ww_J},\alpha_k^\vee\rangle}ww'{\bf 1}_\theta$. Since
$$h_k(t)\dot{w}D(\theta)_J=t^{\langle \theta^{ww_J},\alpha_k^\vee\rangle}\dot{w}D(\theta)_J=t^{\langle \theta,w_Jw^{-1}(\alpha_k)^\vee\rangle}\dot{w}D(\theta)_J=t^{\langle \theta,\alpha_l^\vee\rangle}\dot{w}D(\theta)_J,$$ and $U_{w_Jw^{-1},q^{4a}}$ is invariant under $G_{k,q^{4a}}$-conjugation, it follows that
$$\underline{U_{w_Jw^{-1},q^{4a}}}\dot{w}D(\theta)_J\in V$$
combining Lemma \ref{A1}, Remark \ref{remA1}, and (\ref{v3}). This completes the proof.

\noindent{\bf Case 2}: $({\bf U}_{w_Jw^{-1}})^{s_k}\neq{\bf U}_{w_Jw^{-1}}$.

\noindent The idea is similar to the proof of \cite[Proposition 4.4]{CD2}. However the discussion is more complicated. By (\ref{s-1us}), we have
$$
\aligned
\dot{s_k}^{-1}v &\ = \underline{U_{-\alpha_k,q^a}}\cdot\underline{U_{w_Jw^{-1},q^{a}}}\dot{w}D(\theta)_J\\
&\ =\underline{U_{w_Jw^{-1},q^{a}}}\dot{w}D(\theta)_J+\sum_{t\in\mathbb{F}_{q^a}^*}
\varepsilon_{\alpha_k}(-t^{-1})\dot{s_k}h_k(t)\underline{U_{w_Jw^{-1},q^{a}}}\dot{w}D(\theta)_J\\
&\ =\underline{U_{w_Jw^{-1},q^{a}}}\dot{w}D(\theta)_J+\sum_{t\in\mathbb{F}_{q^a}^*}
\theta^w(h_k(t))\varepsilon_{\alpha_k}(-t^{-1})\dot{s_k}\underline{U_{w_Jw^{-1},q^{a}}}\dot{w}D(\theta)_J\\
&\ =\underline{U_{w_Jw^{-1},q^{a}}}\dot{w}D(\theta)_J+\sum_{t\in\mathbb{F}_{q^a}^*}\theta^w(h_k(-t^{-1}))\varepsilon_{\alpha_k}(t)\underline{(U_{w_Jw^{-1},q^{a}})^{s_k}}\dot{s_k}\dot{w}D(\theta)_J.
\endaligned
$$

\medskip
Since $({\bf U}_{w_Jw^{-1}})^{s_k}\neq{\bf U}_{w_Jw^{-1}}$  we have $s_k\Phi_{w_Jw^{-1}}^-\backslash\Phi_{w_Jw^{-1}}^-\neq\varnothing$.
Denote $\delta=\max\{\op{ht}(\alpha)\mid\alpha\in s_k\Phi_{w_Jw^{-1}}^-\backslash\Phi_{w_Jw^{-1}}^-\}$, and choose $\gamma\in s_k\Phi_{w_Jw^{-1}}^-\backslash\Phi_{w_Jw^{-1}}^-$ such that $\op{ht}(\gamma)=\delta$. Now we set  $$\Gamma=\{\alpha\in\Phi_{w_Jw^{-1}}^-\cap s_k\Phi_{w_Jw^{-1}}^-\mid\op{ht}(\alpha)\geq \delta\}.$$
Let $\Gamma'=\Phi_{w_Jw^{-1}}^-\backslash\Gamma$ and
$$s_k\Phi_{w_Jw^{-1}}^-\backslash(\Gamma\cup\{\gamma\})=\{\gamma_1,\cdots,\gamma_m\}~~ \text{with}~~ \op{ht}(\gamma_1)\geq\cdots\geq\op{ht}(\gamma_m).$$

For any $\beta\in \Phi_{w_Jw^{-1}}^-\backslash s_k\Phi_{w_Jw^{-1}}^-$, since
$$w_Jw^{-1}s_k(\beta)=w_Jw^{-1}(\beta)-
\langle \beta,\alpha_k^\vee\rangle w_Jw^{-1}(\alpha_k)\in\Phi^+,$$
this forces $\langle \beta,\alpha_k^\vee\rangle<0$, and hence $s_k(\beta)-\beta\in\mathbb{Z}_{\geq0}\Phi^+$. It follows that $$\op{ht}(\beta) \leq \op{ht}(\gamma)= \delta$$ for any $\beta\in \Phi_{w_Jw^{-1}}^-\backslash s_k\Phi_{w_Jw^{-1}}^-$. Therefore, $A=\Gamma$ and $B=\Gamma'$ satisfy the assumption in Lemma \ref{key}.
 Moreover it is clear that the set $U_{\Gamma,q^b}:=\prod_{\alpha\in\Gamma}U_{\alpha,q^b}$ is a normal subgroup of $U_{w_Jw^{-1}, q^b}$ and $U_{w_Jw^{-1}s_k, q^b}$ for any $b>0$.

\medskip
Now we set
$$\xi_1:=\underline{U_{w_Jw^{-1},q^{a}}}\dot{w}D(\theta)_J,~\xi_2:=\sum_{t\in\mathbb{F}_{q^a}^*}\theta^w(h_k(-t^{-1}))\varepsilon_{\alpha_k}(t)\underline{(U_{w_Jw^{-1},q^{a}})^{s_k}}\dot{s_k}\dot{w}D(\theta)_J.$$
Let $C$ be a complete set of representatives of left cosets of $U_{\Gamma,q^a}$ in $U_{\Gamma,q^{2a}}$.  Then
\begin{equation}\label{xinc}
\sum_{x\in C}x\xi_1=\underline{U_{\Gamma,q^{2a}}}\cdot\prod_{\alpha\in\Gamma'}\underline{U_{\alpha,q^a}}\dot{w}D(\theta)_J,
\end{equation}
where the product is taken with respect to a fixed order in $\Gamma'$.
Since for any $t\in\mathbb{F}_{q^a}^*$, the conjugation of $\varepsilon_{\alpha_k}(t)$ takes $C$ to another complete set of representatives of the left cosets of $U_{\Gamma,q^a}$ in $U_{\Gamma,q^{2a}}$ by the normality of $U_{\Gamma,q^{2a}}$, we have
\begin{equation}\label{xxi2}
\sum_{x\in C}x\xi_2=\sum_{t\in\mathbb{F}_{q^a}^*}\theta^w(h_k(-t^{-1}))\varepsilon_{\alpha_k}(t)\underline{U_{\Gamma,q^{2a}}}\cdot\underline{U_{\gamma,q^a}}\cdot\prod_{i=1}^m\underline{U_{\gamma_i,q^a}}\dot{s_k}\dot{w}D(\theta)_J
\end{equation}
by Lemma \ref{key}, where the product is taken with respect to the order $\gamma_1,\cdots,\gamma_m$.

Let $C'$  be a complete set of representatives of left cosets of $U_{\gamma,q^a}$ in $U_{\gamma,q^{2a}}$.
The sets  $A=\Gamma$ and $B=\Gamma'$ also satisfy the assumption in Lemma \ref{key} (ii).
Then we have
\begin{equation}\label{=0}
\sum_{y\in C'}y\sum_{x\in C}x\xi_1=q^a\sum_{x\in C}x\xi_1=0
\end{equation}
by (\ref{xinc}) and Lemma \ref{key} (ii). Since $[y^{-1},\varepsilon_{\alpha_k}(t)^{-1}]\in U_{\Gamma,q^{2a}}$ for any $y\in C'$ and $t\in\mathbb{F}_{q^a}^*$ (here $[a,b]=aba^{-1}b^{-1}$), we have
\begin{equation}\label{c'c}
\sum_{y\in C'}y\sum_{x\in C}x\dot{s_k}^{-1}v=\sum_{t\in\mathbb{F}_{q^a}^*}\theta^w(h_k(-t^{-1}))\varepsilon_{\alpha_k}(t)\underline{U_{\Gamma,q^{2a}}}\cdot\underline{U_{\gamma,q^{2a}}}\cdot\prod_{i=1}^m\underline{U_{\gamma_i,q^a}}\dot{s_k}\dot{w}D(\theta)_J\in V
\end{equation}
by (\ref{xxi2}), (\ref{=0}), and Lemma \ref{key} (i).

For any $i$ and $x\in C_i$, we have $$[x^{-1},\varepsilon_{\alpha_k}(t)^{-1}]\in \displaystyle\prod_{{r,s>0}\atop{r\gamma_i+s\alpha_k\in\Phi}}U_{r\gamma_i+s\alpha_k,q^{2a}}$$ by commutator relations in ${\bf U}$. Since (1) $\Phi_{w_Jw^{-1}s_k}^-=s_k\Phi_{w_Jw^{-1}}^-\cup\{\alpha_k\}$; (2) ${\rm ht}(r\gamma_i+s\alpha_k)>{\rm ht}(\gamma_i)$; (3) $r\gamma_i+s\alpha_k\ne\alpha_k$; (4) $r\gamma_i+s\alpha_k\in\Phi_{w_Jw^{-1}s_k}^-$, It follows that $r\gamma_i+s\alpha_k\in\Gamma\cup\{\gamma,\gamma_1,\cdots,\gamma_{i-1}\}$ (by our ht-downward arrangement), and hence $[x^{-1},\varepsilon_{\alpha_k}(t)^{-1}]\in U_{\Gamma,q^{2a}}U_{\gamma,q^{2a}}U_{\gamma_1,q^{2a}}\cdots U_{\gamma_{i-1},q^{2a}}$ (which is a group by the commutator formula). For each $1\leq i\leq m$, let $C_i$ be a complete set of representatives of the left cosets of $U_{\gamma_i,q^a}$ in $U_{\gamma_i,q^{2a}}$ and $\psi_i=\sum_{x\in C_i}x$.
Therefore, we have
$$
\aligned
\xi &\ :=\psi_m\psi_{m-1}\cdots\psi_1\sum_{y\in C'}y\sum_{x\in C}x\dot{s_k}^{-1}v\\ &\ =\sum_{t\in\mathbb{F}_{q^a}^*}\theta^w(h_k(-t^{-1}))\varepsilon_{\alpha_k}(t)\underline{(U_{w_Jw^{-1},q^{2a}})^{s_k}}\dot{s_k}\dot{w}D(\theta)_J\in V \endaligned
$$
by Lemma \ref{key} (i) and (\ref{s-1us}), and hence
$$
\aligned
\dot{s_k}^{-1}\xi &\ =\sum_{t\in\mathbb{F}_{q^a}^*}\theta^w(h_k(-t^{-1}))\varepsilon_{-\alpha_k}(-t)\underline{U_{w_Jw^{-1},q^{2a}}}\dot{w}D(\theta)_J\\
&\ =\sum_{t\in\mathbb{F}_{q^a}^*}\theta^w(h_k(-t^{-1}))\varepsilon_{\alpha_k}(-t^{-1})\dot{s_k}h_k(t)\underline{U_{w_Jw^{-1},q^{2a}}}\dot{w}D(\theta)_J\\
&\ =\theta^w(h_k(-1))\sum_{t\in\mathbb{F}_{q^a}^*}\varepsilon_{\alpha_k}(-t^{-1})\dot{s_k}\underline{U_{w_Jw^{-1},q^{2a}}}\dot{w}D(\theta)_J.
\endaligned
$$
It follows that
\begin{equation}\label{neq0}
\sum_{t\in\mathbb{F}_{q^a}^*}\varepsilon_{\alpha_k}(-t^{-1})\underline{(U_{w_Jw^{-1},q^{2a}})^{s_k}}\dot{s_k}\dot{w}D(\theta)_J\in V.
\end{equation}
Combining (\ref{any}) and (\ref{neq0}) yields $\underline{U_{w_Jw^{-1},q^{2a}}}\dot{w}D(\theta)_J\in V$ which completes the proof.
\end{proof}

\section{The natural characteristic (non-antidominant case)   }

In this section we consider the non-antidominant case. The main result of this section is

\begin{theorem}\label{infinite}
If $\theta$ is not antidominant, then the $\Bbbk {\bf G}$-module $\mathbb{M}(\theta)$ has an infinite submodule filtration. In particular, $\mathbb{M}(\theta)$ has infinite length.
\end{theorem}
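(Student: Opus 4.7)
The strategy has two stages: reduce to $SL_2(\bar{\mathbb F}_q)$ via induction functoriality, then exhibit the infinite filtration in that case through a direct-limit argument.

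Since $\theta$ is not antidominant, pick $i\in I$ with $\langle\theta,\alpha_i^\vee\rangle>0$ and extend $\theta$ to a character of ${\bf P}_{\{i\}}={\bf L}_{\{i\}}\ltimes{\bf U}_{\{i\}}$ by letting ${\bf U}_{\{i\}}$ act trivially. Transitivity of abstract induction yields
\[
\mathbb{M}(\theta)\;\cong\;\Bbbk{\bf G}\otimes_{\Bbbk{\bf P}_{\{i\}}}\bigl(\Bbbk{\bf P}_{\{i\}}\otimes_{\Bbbk{\bf B}}\theta\bigr),
\]
and the outer induction $\Bbbk{\bf G}\otimes_{\Bbbk{\bf P}_{\{i\}}}(-)$ is exact and faithful on submodules, since $\Bbbk{\bf G}$ is free as a right $\Bbbk{\bf P}_{\{i\}}$-module via Bruhat coset representatives. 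Hence it suffices to produce an infinite ascending chain of submodules of $\Bbbk{\bf P}_{\{i\}}\otimes_{\Bbbk{\bf B}}\theta$; via the Levi decomposition and quotienting by the central torus of ${\bf L}_{\{i\}}$, this reduces the theorem to the case ${\bf G}=SL_2(\bar{\mathbb F}_q)$ with $\theta$ the character $\op{diag}(t,t^{-1})\mapsto t^n$ for $n=\langle\theta,\alpha_i^\vee\rangle>0$.

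For $G=SL_2(\bar{\mathbb F}_q)$, I would write $\mathbb{M}(\theta)=\bigcup_{a\geq 1}M_a$ as the directed union of the finite-level pieces $M_a:=\Bbbk G_{q^a}\otimes_{\Bbbk B_{q^a}}\theta$, each of dimension $q^a+1$ over $\Bbbk$. Classical structural results on the induced modules $M_a$ over $G_{q^a}=SL_2(\mathbb F_{q^a})$ (cf.~\cite{De}), phrased through the base-$p$ expansion of $n$ modulo $q^a-1$ together with Steinberg's tensor product theorem, yield explicit composition factors of $M_a$ that exhibit new Frobenius-twisted simple modules as $a$ grows. For each $a$ I would select a distinguished generator $\xi_a\in M_a$ (a Jantzen-style alternating sum of $T_{q^a}$-eigenvectors that captures the highest Frobenius twist occurring at level $a$), set $N_a:=\Bbbk G\cdot\xi_a\subset\mathbb{M}(\theta)$, and argue that the $N_a$ form an infinite strictly ascending chain.

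The main obstacle is verifying the strictness $N_a\subsetneq N_{a+1}$. \emph{A priori} the $\Bbbk G$-spans of finite-level elements can collapse: $G$ strictly contains each $G_{q^a}$ and can transport $T_{q^a}$-eigenvectors across distinct $T$-weight spaces. To separate the $N_a$ I would attach to each submodule a ``Frobenius-level'' invariant --- the maximal $k$ such that a Frobenius-twisted simple module $L(\lambda)^{F^k}$ appears as a subquotient --- and show, using the Steinberg-tensor-product description of \cite{De} and a weight argument at $T$ analogous to Lemma~\ref{eigenvector}, that this invariant strictly increases with $a$. Combined with the faithfulness of the reduction functor from the first step, this monotonicity produces the claimed infinite filtration of $\mathbb{M}(\theta)$.
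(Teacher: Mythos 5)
Your reduction to $SL_2(\bar{\mathbb F}_q)$ is essentially the paper's argument (transitivity of induction through ${\bf P}_{\{i\}}$, freeness of $\Bbbk{\bf G}$ over $\Bbbk{\bf P}_{\{i\}}$, and a central isogeny $SL_2\times T'\to{\bf L}_{\{i\}}$), and that part is fine. The gap is in the $SL_2$ step, which is where all the actual work of the theorem lives. You never define the generators $\xi_a$, so even the containment $N_a\subseteq N_{a+1}$ is unestablished, let alone strictness; and your proposed strictness mechanism --- a ``Frobenius-level'' invariant defined as the maximal twist $L(\lambda)^{F^k}$ occurring as a subquotient of the abstract $\Bbbk{\bf G}$-module $N_a$ --- is not shown to be well-defined, computable, or monotone. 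The difficulty you correctly flag (the $\Bbbk{\bf G}$-span of a finite-level vector can be much larger than its $\Bbbk G_{q^a}$-span) is precisely the obstacle, and naming an invariant does not overcome it: to control $\Bbbk{\bf G}\cdot\xi$ you must know, for every larger field $\mathbb F_{q^s}$, exactly which $G_{q^s}$-submodule of the level-$s$ module $\xi$ generates.

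The paper resolves this with three concrete ingredients your proposal lacks. First, it passes to the kernel of $\mathbb{M}(\lambda)\to V(\lambda)$, realized as the direct limit $H^0_\lambda=\bigcup_a H^0(q^a-1-\lambda)$, and builds a \emph{descending} chain $\Bbbk{\bf G}v_0(q^{a^i})$ there; working in the kernel is what makes the composition factors trackable. Second, it writes down the transition maps explicitly (Lemma \ref{extend}: $v_i(\tilde q)=\sum_k v_{i+k(\tilde q-1)}(\tilde q^r)$), which is what allows one to compare generators across levels at all. Third --- and this is the heart of the proof --- Lemma \ref{rs} uses Lucas's theorem on binomial coefficients together with Deriziotis's classification of submodules of $H^0(m)$ (Proposition \ref{De}) and the CPS--van der Kallen comparison (Theorem \ref{indiso}) to determine precisely which weight vectors $v_j(q^s)$ lie in $\Bbbk G_{q^s}v_0(q^r)$; strictness (Lemma \ref{bat}) then follows by exhibiting a specific vector $v_{p^{(tt'-1)ad+i_{\boldsymbol e}}-\lambda_{\boldsymbol e}}(q^s)$ in $\Bbbk G_{q^s}v_0(q^a)$ but not in $\Bbbk G_{q^s}v_0(q^b)$, using the partial order $\preccurlyeq$. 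Any correct completion of your sketch would need to reconstruct essentially this computation, so as it stands the proposal identifies the right landscape but does not contain a proof of the key step.
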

Combining Theorem \ref{antidominant} and Theorem \ref{infinite}, we get the following theorem.
\begin{theorem}\label{defchar}
The  $\Bbbk {\bf G}$-module  $\mathbb{M}(\theta)$ has a composition series if and only of $\theta$ is antidominant, in which case $\mathbb{M}(\theta)$ has exactly $2^{|I(\theta)|}$ pairwise nonisomorphic composition factors, each occurring with multiplicity 1.
\end{theorem}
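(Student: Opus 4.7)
The plan is to deduce Theorem \ref{defchar} by pasting together the two preceding theorems. The ``only if'' direction is immediate from Theorem \ref{infinite}: if $\theta$ is not antidominant, then $\mathbb{M}(\theta)$ has infinite length and therefore admits no composition series. The remaining task is, assuming $\theta$ is antidominant, to prove that $\mathbb{M}(\theta)$ has a composition series and that its composition factors are exactly the $E(\theta)_J$ for $J\subseteq I(\theta)$, each with multiplicity one.

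I will pin down the length of $\mathbb{M}(\theta)$ by squeezing it between two bounds. For the upper bound, enumerate the subsets of $I(\theta)$ as $J_0, J_1, \ldots, J_{N-1}$ (with $N = 2^{|I(\theta)|}$) in any linear extension of the partial order by reverse inclusion, so that $J_k\supsetneq J_i$ forces $k<i$; in particular $J_0 = I(\theta)$ and $J_{N-1} = \emptyset$. Set $M_{-1}=0$ and $M_i = \sum_{k\leq i}\mathbb{M}(\theta)_{J_k}$, so that $M_{N-1} = \mathbb{M}(\theta)$. The inclusion $\mathbb{M}(\theta)_K \subseteq \mathbb{M}(\theta)_J$ for $K\supseteq J$ (obtained by decomposing $W_K = \bigsqcup_{v\in W_K\cap X_J} vW_J$ and writing $\eta(\theta)_K = \sum_v(-1)^{\ell(v)}\dot v\,\eta(\theta)_J$) guarantees that $\mathbb{M}(\theta)_{J_i}' = \sum_{K\supsetneq J_i}\mathbb{M}(\theta)_K \subseteq M_{i-1}$. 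Hence the canonical map $\mathbb{M}(\theta)_{J_i}\to M_i/M_{i-1}$ factors through $E(\theta)_{J_i}$, which is irreducible by Theorem \ref{antidominant}. Therefore each subquotient $M_i/M_{i-1}$ is either $0$ or isomorphic to $E(\theta)_{J_i}$, so $\mathbb{M}(\theta)$ has finite length bounded by $N = 2^{|I(\theta)|}$, and in particular admits a composition series.

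For the matching lower bound, observe that for every $J\subseteq I(\theta)$ the simple module $E(\theta)_J = \mathbb{M}(\theta)_J/\mathbb{M}(\theta)_J'$ is already a subquotient of $\mathbb{M}(\theta)$, and Proposition \ref{EJ} shows that as $J$ varies over the subsets of $I(\theta)$ these simples are pairwise non-isomorphic. By Jordan--H\"older, any composition series of $\mathbb{M}(\theta)$ must contain each $E(\theta)_J$ as a factor at least once, giving length $\geq 2^{|I(\theta)|}$. The two bounds coincide, so $\mathbb{M}(\theta)$ has length exactly $2^{|I(\theta)|}$; the chain $\{M_i\}$ is a bona fide composition series (every $M_i/M_{i-1}$ is forced to be nonzero and isomorphic to $E(\theta)_{J_i}$), and each $E(\theta)_J$ appears as a composition factor with multiplicity one.

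The only step that requires more than bookkeeping is the inclusion $\mathbb{M}(\theta)_K \subseteq \mathbb{M}(\theta)_J$ for $K\supseteq J$ used above, which is really just a careful application of the standard parabolic coset decomposition in $W$. Once this is in hand, Theorems \ref{antidominant} and \ref{infinite} together with Proposition \ref{EJ} combine cleanly to yield the stated result.
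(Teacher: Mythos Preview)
Your proposal is correct and follows the same approach as the paper, which simply states that Theorem~\ref{defchar} is obtained by combining Theorem~\ref{antidominant} and Theorem~\ref{infinite}. The filtration argument you spell out (ordering the $\mathbb{M}(\theta)_J$ by reverse inclusion, identifying the successive quotients as quotients of the irreducible $E(\theta)_J$, and matching the upper and lower bounds via Jordan--H\"older and Proposition~\ref{EJ}) is exactly what is implicit in the ``In particular'' clause of Theorem~\ref{antidominant}; the paper leaves this routine bookkeeping to the reader, and you have simply made it explicit.
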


First we deal with the case when ${\bf G}=SL_2(\bar{\mathbb{F}}_q)$. From here to Corollary \ref{infsl2}, we assume that ${\bf G}=SL_2(\bar{\mathbb{F}}_q)$ and $\lambda\in\mathbb{Z}_{>0}$, and denote $V(\lambda), H^0(\lambda)$, and $L(\lambda)$ the corresponding Weyl module, costandard module, and simple module, respectively. By \cite[Corollary 7.5]{CPSV}, we have

\begin{theorem}\label{indiso}
If $M$ is a finite-dimensional rational ${\bf G}$-module such that all highest weights of the composition factors of $M$ are less than $q^a$, then $N$ is a ${\bf G}$-submodule of $M$ if and only if $N$ is a $G_{q^a}$-submodule of $M$.
\end{theorem}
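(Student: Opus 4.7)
The ``only if'' direction is immediate from $G_{q^a} \subseteq {\bf G}$, so the content is in showing every $G_{q^a}$-submodule $N \subseteq M$ is automatically ${\bf G}$-stable. The plan is to induct on $\dim_k M$, relying on two black-box inputs about the rational representation theory of ${\bf G} = SL_2(\bar{\mathbb{F}}_q)$. The first is Steinberg's tensor product theorem: for $0 \le \lambda < q^a$ with base-$q$ expansion $\lambda = \sum_{j=0}^{a-1} \lambda_j q^j$, one has $L(\lambda) \cong L(\lambda_0) \otimes L(\lambda_1)^{(1)} \otimes \cdots \otimes L(\lambda_{a-1})^{(a-1)}$, and consequently the restrictions $L(\lambda)|_{G_{q^a}}$ with $0 \le \lambda < q^a$ form a complete set of pairwise nonisomorphic simple $G_{q^a}$-modules. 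The second is the Cline--Parshall--Scott--van der Kallen comparison: on the category $\mathcal{C}$ of finite-dimensional rational ${\bf G}$-modules whose composition factors all have highest weight less than $q^a$, the restriction functor to $G_{q^a}$-modules is fully faithful and induces an isomorphism on $\op{Ext}^1$.

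Granted these, the induction runs as follows. Pick a simple ${\bf G}$-submodule $L = L(\lambda) \subseteq M$; by Steinberg it is also simple as a $G_{q^a}$-module. If $L \cap N \neq 0$, then $G_{q^a}$-simplicity forces $L \subseteq N$, and the induction hypothesis applied to $N/L \subseteq M/L$ gives that $N/L$, and hence $N$, is ${\bf G}$-stable. If $L \cap N = 0$, the image $\overline{N}$ of $N$ in $M/L$ is $G_{q^a}$-stable, hence ${\bf G}$-stable by induction, so $P := L + N$ is a ${\bf G}$-submodule of $M$. The resulting short exact sequence of ${\bf G}$-modules
\[
0 \longrightarrow L \longrightarrow P \longrightarrow P/L \longrightarrow 0
\]
is $G_{q^a}$-split with $N$ as complement. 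By the $\op{Ext}^1$-part of the comparison this sequence is also ${\bf G}$-split, and its set of splittings is a torsor under $\op{Hom}(P/L, L)$, which by the $\op{Hom}$-part of the comparison is the same group whether computed over ${\bf G}$ or over $G_{q^a}$. Hence the ${\bf G}$-splittings and the $G_{q^a}$-splittings of the sequence coincide as subsets of all $k$-linear splittings, and in particular the splitting determined by $N$ is ${\bf G}$-equivariant, i.e.\ $N$ is ${\bf G}$-stable.

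The main obstacle is establishing the $\op{Ext}^1$-comparison in the second input. The standard route factors the restriction through the Frobenius kernel ${\bf G}_a$: one uses the Lyndon--Hochschild--Serre-type spectral sequence attached to $1 \to {\bf G}_a \to {\bf G} \to {\bf G}/{\bf G}_a \to 1$, combined with vanishing theorems for ${\bf G}_a$-cohomology on modules whose weights are small relative to $q^a$, and then uses Steinberg's tensor product to identify the composition factors appearing on both sides. Even for $SL_2$ this step is quite delicate, and it is appropriate to invoke it as a theorem of Cline--Parshall--Scott--van der Kallen rather than reprove it from scratch.
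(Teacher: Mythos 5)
Your argument is correct, but it is worth saying how it relates to the paper: the paper offers no proof at all here — Theorem \ref{indiso} is quoted verbatim from \cite[Corollary 7.5]{CPSV}, which is (essentially) exactly the statement that for a rational module whose composition factors have small highest weights the ${\bf G}$- and $G_{q^a}$-submodule lattices coincide. What you have done is reconstruct the standard derivation of that corollary from two more primitive inputs: Steinberg's restriction theorem (irreducibility of $L(\lambda)|_{G_{q^a}}$ for $\lambda<q^a$) and the CPSV comparison of $\operatorname{Hom}$ and $\operatorname{Ext}^1$ between the algebraic and finite groups. Your induction is sound, and the key step — that the set of splittings of a split extension is a torsor under $\operatorname{Hom}(P/L,L)$, so full faithfulness on $\operatorname{Hom}$ upgrades ``some ${\bf G}$-complement exists'' to ``the given $G_{q^a}$-complement $N$ is itself ${\bf G}$-stable'' — is exactly the point that is easy to fumble and you handle it correctly. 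Two small caveats. First, Steinberg's tensor product theorem is a base-$p$ statement (twists by powers of $p$, digits $\lambda_j<p$), not base-$q$ as you wrote when $q=p^d$ with $d>1$; this is cosmetic, since the only fact you use is the simplicity and pairwise non-isomorphy of the restrictions $L(\lambda)|_{G_{q^a}}$ for $0\le\lambda<q^a$, which is correct. Second, you should verify that the numerical hypothesis ``highest weights less than $q^a$'' is actually strong enough to trigger the $\operatorname{Hom}$-bijectivity and $\operatorname{Ext}^1$-injectivity you import from CPSV; their generic-cohomology isomorphisms come with explicit bounds in low degree, and for $SL_2$ under this hypothesis they do apply, but the check belongs in the proof rather than in the black box.
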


\medskip
Now we set
$$u_a=\begin{pmatrix}
1 & a \\ 0 & 1
\end{pmatrix},\quad s=\begin{pmatrix}
0 & 1 \\ -1 & 0
\end{pmatrix},\quad h(t)=\begin{pmatrix}
t & 0 \\ 0 & t^{-1}
\end{pmatrix}.$$
Let $\tilde{q}$ be a power of $q$ such that $\tilde{q}>\lambda$. From \cite[Proposition 5.2 c)]{Jan1} we see that there is a basis $v_i(\tilde{q})$ $(0\leq i\leq \tilde{q}-1-\lambda)$ of $H^0(\tilde{q}-1-\lambda)$ such that
 \begin{equation}\label{acofu}
 u_av_i(\tilde{q})=\sum_{0\leq k\leq i}\left(i\atop k\right)a^{i-k}v_k(\tilde{q}),
 \end{equation}
 and
\begin{equation}\label{acofu2}
sv_i(\tilde{q})=(-1)^{\tilde{q}-1-\lambda-i}v_{\tilde{q}-1-\lambda-i}(\tilde{q}),~
h(t)v_i(\tilde{q})=t^{\tilde{q}-1-\lambda-2i}v_i(\tilde{q}).
\end{equation}

\begin{lemma}\label{inclusion}
For any $r>0$ and $\lambda<\tilde{q}$, let $K_{\tilde{q}^r}$ be the kernel of the natural map $\op{Ind}_{B_{\tilde{q}^r}}^{G_{\tilde{q}^r}}\lambda\rightarrow V(\lambda)$. Then we have

\noindent$\op{(1)}$ $K_{\tilde{q}^r}$ is isomorphic to $H^0(\tilde{q}^r-1-\lambda)$.

\noindent$\op{(2)}$ There is an injective $G_{\tilde{q}}$-module homomorphism $K_{\tilde{q}}\hookrightarrow K_{\tilde{q}^r}$.
\end{lemma}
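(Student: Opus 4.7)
For part (1), the natural projection $\pi\colon \op{Ind}_{B_{\tilde{q}^r}}^{G_{\tilde{q}^r}}\lambda\to V(\lambda)$ sending $1\otimes 1_\lambda$ to the highest weight vector $v_\lambda$ is surjective: $V(\lambda)$ is ${\bf G}$-generated by $v_\lambda$, and since all composition factors of $V(\lambda)$ have highest weight $\le\lambda<\tilde{q}^r$, Theorem~\ref{indiso} identifies ${\bf G}$-submodules with $G_{\tilde{q}^r}$-submodules of $V(\lambda)$, so $v_\lambda$ also $G_{\tilde{q}^r}$-generates $V(\lambda)$. This gives the dimension count $\Dim K_{\tilde{q}^r}=\tilde{q}^r-\lambda=\Dim H^0(\tilde{q}^r-1-\lambda)$.

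To identify $K_{\tilde{q}^r}$ with $H^0(\mu)$ where $\mu=\tilde{q}^r-1-\lambda$, I realize $H^0(\mu)$ as the degree-$\mu$ homogeneous component of $\Bbbk[x,y]$ (matching the formulas for the basis $v_i$ in the excerpt, with $v_i\leftrightarrow x^{\mu-i}y^i$) and define the linear form $\phi\colon H^0(\mu)\to\Bbbk$ extracting the coefficient of $y^\mu$. A short check using $-\mu\equiv\lambda\pmod{\tilde{q}^r-1}$ shows $\phi$ is $B_{\tilde{q}^r}$-equivariant for the character $\lambda$, and Frobenius reciprocity supplies a $G_{\tilde{q}^r}$-map $\psi\colon H^0(\mu)\to\op{Ind}_{B_{\tilde{q}^r}}^{G_{\tilde{q}^r}}\lambda$ given by $\psi(v)=\sum_{gB}g\otimes\phi(g^{-1}v)$. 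Using Bruhat coset representatives $\{1\}\cup\{u_as\mid a\in\mathbb{F}_{\tilde{q}^r}\}$ together with the power-sum identities of Lemma~\ref{powsum}, I will verify by a monomial-by-monomial calculation that $\pi\circ\psi=0$ on each $x^{\mu-i}y^i$, putting the image of $\psi$ inside $K_{\tilde{q}^r}$. For injectivity, $\psi(y^\mu)(1)=\phi(y^\mu)=1\ne 0$; any nonzero $\ker\psi$ would (by Theorem~\ref{indiso} applied to $H^0(\mu)$, whose composition factors all have highest weight $\le\mu<\tilde{q}^r$) be a ${\bf G}$-submodule containing the simple socle $L(\mu)$, and $y^\mu=\pm s\cdot v_\mu$ lies in $L(\mu)$---a contradiction. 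Matching dimensions then upgrades $\psi$ to an isomorphism $H^0(\mu)\xrightarrow{\sim}K_{\tilde{q}^r}$.

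For part (2), the identifications in (1) reduce the claim to constructing a $G_{\tilde{q}}$-equivariant injection $\Bbbk[x,y]_{\tilde{q}-1-\lambda}\hookrightarrow\Bbbk[x,y]_{\tilde{q}^r-1-\lambda}$. I realize this as multiplication by a $G_{\tilde{q}}$-invariant polynomial of degree $\tilde{q}^r-\tilde{q}$: Dickson's theorem supplies the $SL_2(\mathbb{F}_{\tilde{q}})$-invariant $D_{\tilde{q}}\in\Bbbk[x,y]$ of degree $\tilde{q}^2-\tilde{q}$, and the power $D_{\tilde{q}}^k$ with $k=(\tilde{q}^{r-1}-1)/(\tilde{q}-1)=1+\tilde{q}+\cdots+\tilde{q}^{r-2}$ has the required degree. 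Multiplication by a nonzero polynomial in an integral domain is automatically injective, completing the construction of $K_{\tilde{q}}\hookrightarrow K_{\tilde{q}^r}$ as $G_{\tilde{q}}$-modules.

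The main obstacle is the verification in part (1) that $\pi\circ\psi=0$ on all of $H^0(\mu)$. A uniform appeal to $\op{Hom}_{{\bf G}}(H^0(\mu),V(\lambda))=0$ fails in the degenerate configuration $\mu=\lambda$ (which can occur for small $r$), so one is forced into the explicit power-sum computation, where the sums $\sum_{a\in\mathbb{F}_{\tilde{q}^r}}a^{\lambda+i-k}$ vanish by Lemma~\ref{powsum} except in the single arithmetic case $(i,k)=(\mu,0)$, whose contribution exactly cancels the one from $g=1$. A related subtlety, needed to invoke the socle argument for injectivity of $\psi$, is the transfer of $G_{\tilde{q}^r}$-linearity to ${\bf G}$-linearity via Theorem~\ref{indiso} applied to graphs inside direct sums rather than to single modules.
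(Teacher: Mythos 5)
Your proof is correct, and for part (1) it is essentially the paper's: the Frobenius-reciprocity map, once written out over the Bruhat coset representatives $\{1\}\cup\{u_as\mid a\in\mathbb{F}_{\tilde{q}^r}\}$, produces exactly the explicit elements of display (\ref{eq}), and your power-sum verification that they lie in $K_{\tilde q^r}$ is the paper's ``routine calculation.'' (Your socle argument for injectivity is an alternative to simply noting that the vectors $\sum_t t^k u_t s\otimes{\bf 1}_\lambda$ are visibly linearly independent; also, the ``graphs in direct sums'' device is unnecessary --- $\ker\psi$ is a $G_{\tilde q^r}$-submodule of $H^0(\mu)$, so Theorem~\ref{indiso} applied directly to $H^0(\mu)$ already makes it a ${\bf G}$-submodule.) Part (2), however, is a genuinely different construction. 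The paper's ``follows immediately from~(\ref{finexseq})'' means: the natural $G_{\tilde q}$-equivariant inclusion $\operatorname{Ind}_{B_{\tilde q}}^{G_{\tilde q}}\lambda\hookrightarrow\operatorname{Ind}_{B_{\tilde q^r}}^{G_{\tilde q^r}}\lambda$ is compatible with the two evaluation maps to $V(\lambda)$, so it restricts to an injection of the kernels. That specific injection is the one computed in coordinates in Lemma~\ref{extend}, and those coordinates are what Lemmas~\ref{rs} and~\ref{bat} run on. Your multiplication-by-$D_{\tilde q}^k$ map (with $k=1+\tilde q+\cdots+\tilde q^{r-2}$, degree $\tilde q^r-\tilde q$) is a perfectly valid $G_{\tilde q}$-module injection $H^0(\tilde q-1-\lambda)\hookrightarrow H^0(\tilde q^r-1-\lambda)$ and proves the lemma as stated, and it has the merit of making $G_{\tilde q}$-equivariance manifest; but it is \emph{not} the same map, so if you pushed this version forward you would have to redo Lemma~\ref{extend} (and hence~\ref{rs}) from scratch. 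In short: both approaches prove Lemma~\ref{inclusion}, but the paper's choice is dictated by the downstream explicit computations.
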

\begin{proof}
We first give the inclusion $H^0(\tilde{q}^r-1-\lambda)\hookrightarrow\op{Ind}_{B_{\tilde{q}^r}}^{G_{\tilde{q}^r}}\lambda$ explicitly.
 Consider the elements $\sum\limits_{t\in \mathbb{F}_{\tilde{q}^r}}t^iu_ts{\bf 1}_\lambda$ $(0\leq i< \tilde{q}^r-1-\lambda)$ and $(-1)^\lambda{\bf 1}_\lambda+\sum\limits_{t\in \mathbb{F}_{\tilde{q}^r}}t^{\tilde{q}^r-1-\lambda}u_ts{\bf 1}_\lambda$. A direct calculation shows that for any $0\leq i\leq \tilde{q}^r-1-\lambda$, we have
\begin{equation}\label{eq1}
u_a(-1)^i\sum_{t\in \mathbb{F}_{\tilde{q}^r}}t^iu_ts{\bf 1}_\lambda=\sum_{0\leq k\leq i}\left(i\atop k\right)a^{i-k}(-1)^{k}\sum_{t\in \mathbb{F}_{\tilde{q}^r}}t^ku_ts{\bf 1}_\lambda ~(a\in\mathbb{F}_{\tilde{q}^r});
$$
$$h(b)\sum_{t\in\mathbb{F}_{\tilde{q}^r}}t^iu_ts{\bf 1}_\lambda=b^{\tilde{q}^r-1-\lambda-2i}\sum_{t\in\mathbb{F}_{\tilde{q}^r}}t^iu_ts{\bf 1}_\lambda~(b\in\mathbb{F}_{\tilde{q}^r}^*).
\end{equation}
and
\begin{equation}\label{eq2}
s\sum\limits_{t\in \mathbb{F}_{\tilde{q}^r}}t^iu_ts{\bf 1}_\lambda=\left\{\begin{array}{ll}
(-1)^i\sum\limits_{t\in \mathbb{F}_{\tilde{q}^r}}t^{\tilde{q}^r-1-\lambda-i}u_ts{\bf 1}_\lambda &\  0<i\leq \tilde{q}^r-1-\lambda\\
(-1)^\lambda{\bf 1}_\lambda+\sum\limits_{t\in \mathbb{F}_{\tilde{q}^r}}t^{\tilde{q}^r-1-\lambda}u_ts{\bf 1}_\lambda &\  i=0.
\end{array}\right.
\end{equation}

By (\ref{acofu}), (\ref{acofu2}), (\ref{eq1}), and (\ref{eq2}), we see that the map
\begin{equation}\label{eq}
v_k(\tilde{q}^r)\mapsto\left\{\begin{array}{ll}
(-1)^k\sum\limits_{t\in \mathbb{F}_{\tilde{q}^r}}t^ku_ts{\bf 1}_\lambda &\  0\leq k<\tilde{q}^r-1-\lambda\\
(-1)^{\tilde{q}^r-1-\lambda}\left((-1)^\lambda{\bf 1}_\lambda+\sum\limits_{t\in \mathbb{F}_{\tilde{q}^r}}t^{\tilde{q}^r-1-\lambda}u_ts{\bf 1}_\lambda\right) &\ k=\tilde{q}^r-1-\lambda
\end{array}\right.
\end{equation}
gives the desired inclusion.

Routine calculations show that the  elements in the right hand side of (\ref{eq}) is in $K_{\tilde{q}^r}$, and hence the comparison of the dimension gives the exact sequence
\begin{equation}\label{finexseq}
0\rightarrow H^0(\tilde{q}^r-1-\lambda)\rightarrow\op{Ind}_{B_{\tilde{q}^r}}^{G_{\tilde{q}^r}}\lambda\rightarrow V(\lambda)\rightarrow0
\end{equation}
which proves (1). Statement (2) follows immediately from (\ref{finexseq}).
\end{proof}

By Lemma \ref{inclusion}, if $a|b$, then there is an injective $G_{q^a}$-module homomorphism $H^0(q^a-1-\lambda)\rightarrow H^0(q^b-1-\lambda)$. This family of injection forms a direct system so that one can form the direct limit $H_{\lambda}^0=\bigcup\limits_{a>0}H^0(q^a-1-\lambda)$ (which is a $\Bbbk{\bf G}$-module). Taking direct limits as $a\rightarrow\infty$ to (\ref{finexseq})
yields the exact sequence
\begin{equation}\label{exseq}
0\rightarrow H_{\lambda}^0\rightarrow\mathbb{M}(\lambda)\rightarrow V(\lambda)\rightarrow0.
\end{equation}

By Lemma \ref{inclusion}, one can for $0\le i\le \widetilde{q}-1-\lambda$ regard $v_i(\tilde{q})$ as an element of $H^0(\tilde{q}^r-1-\lambda)$. With this identification, we have
\begin{lemma}\label{extend}
For any $0\leq i\leq \tilde{q}-1-\lambda$ and $r>1$, we have
$$v_i(\tilde{q})=\sum_{k=0}^{\tilde{q}+\cdots+\tilde{q}^{r-1}}v_{i+k(\tilde{q}-1)}(\tilde{q}^r)$$
in $H^0(\tilde{q}^r-1-\lambda)$. In particular, this gives the injection in Lemma \ref{inclusion} (2) explicitly.
\end{lemma}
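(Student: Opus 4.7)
The plan is to identify the injection $H^0(\tilde{q}-1-\lambda) \hookrightarrow H^0(\tilde{q}^r-1-\lambda)$ from Lemma \ref{inclusion} (2) explicitly, and then verify the identity by a direct computation inside $\op{Ind}_{B_{\tilde{q}^r}}^{G_{\tilde{q}^r}} \lambda$. The key observation is that the injection is induced by the natural map $\iota:\op{Ind}_{B_{\tilde{q}}}^{G_{\tilde{q}}}\lambda \to \op{Ind}_{B_{\tilde{q}^r}}^{G_{\tilde{q}^r}}\lambda$ sending ${\bf 1}_\lambda$ to ${\bf 1}_\lambda$: by Bruhat decomposition the basis $\{{\bf 1}_\lambda\}\cup\{u_ts{\bf 1}_\lambda\mid t\in\mathbb{F}_{\tilde{q}^r}\}$ of the target restricts to the analogous basis of the source, so $\iota$ is injective, and uniqueness of the $G_{\tilde{q}}$-equivariant map to $V(\lambda)$ with prescribed value on ${\bf 1}_\lambda$ forces $\iota(K_{\tilde{q}})\subset K_{\tilde{q}^r}$. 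So it suffices to verify the asserted identity in $\op{Ind}_{B_{\tilde{q}^r}}^{G_{\tilde{q}^r}}\lambda$, using the description of the $v_k(\tilde{q}^r)$ from (\ref{eq}).

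For $0\leq i<\tilde{q}-1-\lambda$, every index $i+k(\tilde{q}-1)$ with $0\leq k\leq N:=\tilde{q}+\cdots+\tilde{q}^{r-1}$ satisfies $i+k(\tilde{q}-1)<\tilde{q}^r-1-\lambda$, so the first case of (\ref{eq}) applies throughout. Using that $(-1)^{k(\tilde{q}-1)}=1$ in $\Bbbk$ (trivially for $p=2$, and because $\tilde{q}-1$ is even for $p$ odd), the identity reduces to showing
\[
(-1)^i\sum_{t\in\mathbb{F}_{\tilde{q}}}t^iu_ts{\bf 1}_\lambda \;=\; (-1)^i\sum_{t\in\mathbb{F}_{\tilde{q}^r}} t^i\Bigl(\sum_{k=0}^N t^{k(\tilde{q}-1)}\Bigr) u_ts{\bf 1}_\lambda,
\]
which by linear independence of the $u_ts{\bf 1}_\lambda$ amounts to the character-sum identity
\[
\sum_{k=0}^{N} t^{k(\tilde{q}-1)} \;=\; \begin{cases} 1 & \text{if } t\in\mathbb{F}_{\tilde{q}},\\ 0 & \text{if } t\in\mathbb{F}_{\tilde{q}^r}\setminus\mathbb{F}_{\tilde{q}},\end{cases}
\]
in $\Bbbk$. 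For $t\in\mathbb{F}_{\tilde{q}}^*$ every summand equals $1$ and $N+1=(\tilde{q}^r-1)/(\tilde{q}-1)=1+\tilde{q}+\cdots+\tilde{q}^{r-1}\equiv 1\pmod p$, since $p\mid\tilde{q}$; for $t\in\mathbb{F}_{\tilde{q}^r}\setminus\mathbb{F}_{\tilde{q}}$ the element $x=t^{\tilde{q}-1}$ is not $1$ but satisfies $x^{N+1}=t^{\tilde{q}^r-1}=1$, so the geometric series telescopes to $(x^{N+1}-1)/(x-1)=0$; the $t=0$ contribution is immediate.

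The only subtlety lies in the boundary case $i=\tilde{q}-1-\lambda$, where the top index $i+N(\tilde{q}-1)=\tilde{q}^r-1-\lambda$ forces the \emph{second} branch of (\ref{eq}) for the $k=N$ summand, contributing an extra $(-1)^{\tilde{q}^r-1-\lambda}(-1)^{\lambda}{\bf 1}_\lambda$. Since $\tilde{q}^r-\tilde{q}$ is always even in $\mathbb{Z}$, $(-1)^{\tilde{q}^r-1-\lambda}=(-1)^{\tilde{q}-1-\lambda}$, and the $u_ts{\bf 1}_\lambda$-part recombines across all $k$ exactly as before to give $(-1)^{\tilde{q}-1-\lambda}\sum_{t\in\mathbb{F}_{\tilde{q}}}t^{\tilde{q}-1-\lambda}u_ts{\bf 1}_\lambda$, which together with the extra ${\bf 1}_\lambda$-term is precisely the image of $v_{\tilde{q}-1-\lambda}(\tilde{q})$ under (\ref{eq}). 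The main obstacle is bookkeeping the signs and the boundary summand correctly; the substance of the proof is the elementary finite-field identity above.
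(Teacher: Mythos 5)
Your proof is correct and follows essentially the same route as the paper's: both express each side via the explicit formula (\ref{eq}), reduce to the finite-field geometric-series identity $\sum_{k=0}^{N}t^{k(\tilde{q}-1)}=(t^{\tilde{q}^r-1}-1)/(t^{\tilde{q}-1}-1)=0$ for $t\notin\mathbb{F}_{\tilde{q}}$ and $=N+1\equiv1\pmod p$ for $t\in\mathbb{F}_{\tilde{q}}$, and treat the boundary summand $k=N$ at $i=\tilde{q}-1-\lambda$ separately with the same sign bookkeeping. Your additional justification that the inclusion of Lemma \ref{inclusion}(2) is induced by ${\bf 1}_\lambda\mapsto{\bf 1}_\lambda$ is a point the paper leaves implicit, and it is sound.
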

\begin{proof}
Notice that for $t\in\mathbb{F}_{\tilde{q}^r}$
$$
\sum_{k=0}^{\tilde{q}+\cdots+\tilde{q}^{r-1}}t^{k(\tilde{q}-1)}=\left\{\begin{array}{cl}
(t^{\tilde{q}^r-1}-1)/(t^{\tilde{q}-1}-1)=0 &\  t\not\in\mathbb{F}_{\tilde{q}}\\
1 &\ t\in\mathbb{F}_{\tilde{q}}.
\end{array}\right.
$$
If $i<\tilde{q}-1-\lambda$, then $i+k(\tilde{q}-1)<\tilde{q}^q-1-\lambda$ for any $0\le k\le\tilde{q}+\cdots\tilde{q}^{r-1}$. It follows from (\ref{eq}) that
$$
\aligned
\sum_{k=0}^{\tilde{q}+\cdots+\tilde{q}^{r-1}}v_{i+k(\tilde{q}-1)}(\tilde{q}^r) &\ =\sum_{t\in\mathbb{F}_{\tilde{q}^r}}(-1)^i\sum_{k=0}^{\tilde{q}+\cdots+\tilde{q}^{r-1}}t^{i+k(\tilde{q}-1)}u_ts{\bf 1}_\lambda\\
&\ =(-1)^i\sum_{t\in\mathbb{F}_{\tilde{q}}}t^iu_ts{\bf 1}_\lambda\\
&\ =v_i(\tilde{q}),
\endaligned
$$
and
$$
\aligned
\sum_{k=0}^{\tilde{q}+\cdots+\tilde{q}^{r-1}}v_{\tilde{q}-1-\lambda+k(\tilde{q}-1)}(\tilde{q}^r) &\ =\sum_{t\in\mathbb{F}_{\tilde{q}^r}}(-t)^{\tilde{q}-1-\lambda}\sum_{k=0}^{\tilde{q}+\cdots+\tilde{q}^{r-1}}(-t)^{k(\tilde{q}-1)}u_ts{\bf 1}_\lambda\\
&\ ~~~+(-1)^\lambda(-1)^{\tilde{q}^r-1-\lambda}{\bf 1}_\lambda\\
&\ =(-1)^{\tilde{q}^r-1-\lambda}\left(\sum_{t\in\mathbb{F}_{\tilde{q}}}t^{\tilde{q}-1-\lambda}u_ts{\bf 1}_\lambda+(-1)^\lambda{\bf 1}_\lambda\right)\\
&\ =v_{\tilde{q}-1-\lambda}(\tilde{q}),
\endaligned
$$
which completes the proof.
\end{proof}

\begin{lemma}\label{wtvec}
Let $r>0$ and $V$ be a $G_{\tilde{q}^r}$-submodule of $H^0(\tilde{q}^r-1-\lambda)$. If
$v=\sum_ic_iv_i(\tilde{q}^r)\in V$, then $c_j\neq0\Rightarrow v_j(\tilde{q}^r)\in V$.
\end{lemma}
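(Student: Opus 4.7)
The plan is to use the torus $T_{\tilde{q}^r}$ to reduce $v$ to a single weight space, and then to separate the (at most two) basis vectors remaining in that weight space via the action of the root subgroup.

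Since $T_{\tilde{q}^r}$ is cyclic of order $\tilde{q}^r - 1$, which is coprime to $p = \op{char}\Bbbk$, the module $V$ splits as a direct sum of its $T_{\tilde{q}^r}$-weight spaces, and the projector
\[
\pi_\chi = \frac{1}{\tilde{q}^r - 1}\sum_{t \in \mathbb{F}_{\tilde{q}^r}^*} \chi(t)^{-1}\,h(t)
\]
maps $V$ into $V$, so I may assume $v$ is $T_{\tilde{q}^r}$-homogeneous. By (\ref{acofu2}) the weight of $v_i(\tilde{q}^r)$ is $t \mapsto t^{\tilde{q}^r - 1 - \lambda - 2i}$, so two distinct indices $i, i'$ in range share a weight exactly when $2(i - i') \equiv 0 \pmod{\tilde{q}^r - 1}$. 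In characteristic $2$ this forces $i = i'$, every weight space is $1$-dimensional, and the claim is immediate. In odd characteristic each weight space has dimension at most $2$, spanned by $v_j(\tilde{q}^r)$ and $v_{j + m}(\tilde{q}^r)$ with $m = (\tilde{q}^r - 1)/2$, so I reduce to
\[
v = c_j\, v_j(\tilde{q}^r) + c_{j + m}\, v_{j + m}(\tilde{q}^r),\qquad 0 \le j < m,\quad j + m \le \tilde{q}^r - 1 - \lambda,
\]
with both $c_j, c_{j + m}$ nonzero (otherwise the claim follows at once).

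For this two-term case, apply $u_a$. By (\ref{acofu}), $u_a v$ is a polynomial in $a$ of degree at most $j + m < \tilde{q}^r$ that takes values in $V$ for every $a \in \mathbb{F}_{\tilde{q}^r}$; Vandermonde interpolation therefore places each coefficient back into $V$. Explicitly, the coefficient of $a^l$ is
\[
\alpha_l = [l \le j]\,c_j\binom{j}{l}v_{j - l}(\tilde{q}^r) + c_{j + m}\binom{j + m}{l}v_{j + m - l}(\tilde{q}^r) \in V.
\]
Choosing $l = m$ kills the first term (since $m > j$) and gives $c_{j + m}\binom{j + m}{m}v_j(\tilde{q}^r) \in V$. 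When the binomial $\binom{j + m}{m}$ is nonzero in $\Bbbk$, this produces $v_j(\tilde{q}^r) \in V$ directly, and then $v_{j + m}(\tilde{q}^r) \in V$ follows from the expression for $v$. In the residual case, where Lemma \ref{Binomial} forces this binomial to vanish modulo $p$ — which happens precisely when some base-$p$ digit of $j$ is at least $(p + 1)/2$, so that the addition $j + m$ produces a carry against the all-$(p-1)/2$-digit expansion of $m$ — one first extracts other basis vectors from the non-vanishing $\alpha_l$'s (for instance $\alpha_{j + m} = c_{j + m}v_0(\tilde{q}^r)$), then applies the Weyl reflection $s$ from (\ref{acofu2}) to produce $v_{\tilde{q}^r - 1 - \lambda - k}(\tilde{q}^r) \in V$, and iterates the $u_a$-analysis on these new elements.

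The main obstacle is this residual case: one must verify, using Lemma \ref{Binomial} repeatedly, that some chain of unipotent and Weyl operations always terminates in producing $v_j(\tilde{q}^r)$, regardless of where the base-$p$ carries occur. This combinatorial tracking of which binomials vanish in $\Bbbk$ is the most delicate part of the argument.
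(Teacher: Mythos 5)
Your approach has a genuine gap that you yourself flag: the residual case where the relevant binomial coefficients vanish in $\Bbbk$ is not resolved, only described as ``delicate.'' The difficulty is real. Even after extracting $v_0(\tilde{q}^r)$ and then $v_{\tilde{q}^r-1-\lambda}(\tilde{q}^r)$ via the Weyl element $s$, there is no a priori reason the iteration of $u_a$-interpolation and $s$ should hit $v_j(\tilde{q}^r)$: each step only produces $v_k(\tilde{q}^r)$ for those $k$ with $\binom{N}{k}\neq 0 \pmod p$ (where $N$ is the index of the vector you have in hand), and Lemma \ref{Binomial} shows this is a restrictive digit condition. You would need a combinatorial argument that the reachable set of indices is always all of $\{0,\dots,\tilde{q}^r-1-\lambda\}$, and that is precisely the part that is missing.

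The paper sidesteps this obstruction entirely by taking a different and much shorter route through Theorem \ref{indiso} (the Cline--Parshall--Scott--van der Kallen bound). Since the highest weight of $H^0(\tilde{q}^r-1-\lambda)$ is less than $\tilde{q}^r$, every composition factor is $\tilde{q}^r$-restricted, so Theorem \ref{indiso} says that any $G_{\tilde{q}^r}$-submodule $V$ is automatically a $\bf G$-submodule. For $\bf G$-submodules the statement is immediate, because the $\bf T$-weights of $v_i(\tilde{q}^r)$ are the distinct integers $\tilde{q}^r-1-\lambda-2i$, so the $\bf T$-weight spaces are all one-dimensional (this is exactly where your approach runs into trouble: passing to $T_{\tilde{q}^r}$ collapses some of these weights modulo $\tilde{q}^r-1$, producing the two-dimensional weight spaces you then have to pry apart by hand). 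The lesson is that the key input is the submodule rigidity of Theorem \ref{indiso}, already available in Section~4 of the paper; invoking it replaces the entire binomial-carry analysis.
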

\begin{proof}
Clearly, the result holds when $G_{\tilde{q}^r}$ is replaced by ${\bf G}$. Since all highest weights of the composition factors of $H^0(\tilde{q}^r-1-\lambda)$ are $\tilde{q}^r$-restricted, the result follows immediately from Theorem \ref{indiso}.
\end{proof}

Since the proof of Theorem \ref{infinite} is technical, we give the following main idea of the proof first for the convenience of readers.

\smallskip
\noindent{\bf Main idea}: The general case can be reduced to ${\bf G}=SL_2(\bar{\mathbb{F}}_q)$ (see the discussion after Corollary \ref{infsl2} ).
From (\ref{exseq}), it is enough to show that $H_{\lambda}^0$ has infinite length. To see this, it suffices to prove that $\Bbbk{\bf G}v_0(q^{a^i})$ $(i\in\mathbb{N})$ is a strictly descending chain of $\Bbbk{\bf G}$-submodules of $H_\lambda^0$ if $a$ is large enough (see (\ref{infchain}) below). Thus, it is enough to show that $\Bbbk{\bf G}v_0(q^b)\subsetneq \Bbbk{\bf G}v_0(q^a)$ if $a$ is large enough and $a|b$. We prove this by showing that $\Bbbk G_{q^s}v_0(q^b)\subsetneq\Bbbk G_{q^s}v_0(q^a)$ holds in $H^0(q^s-1-\lambda)$ for any common multiple $s$ of $a,b$. Since the submodule structure of $H^0(q^s-1-\lambda)$ is known (Proposition \ref{De} below), one can use this to estimate ``how large" is $\Bbbk G_{q^s}v_0(q^b)$ and $\Bbbk G_{q^s}v_0(q^a)$ in $H^0(q^s-1-\lambda)$ (Lemma \ref{rs} below), and use these information to conclude that $\Bbbk G_{q^s}v_0(q^b)\subsetneq\Bbbk G_{q^s}v_0(q^a)$ (see the proof of Lemma \ref{bat}) which completes the proof.

\bigskip
Now we turn to technical details. Let $m\in\mathbb{Z}_{>0}$. Following \cite{De}, for each positive integer $j$ we define $\rho_j(m)=m-2r_j$, where $m+1=\lambda_jp^j+r_j$ with $\lambda_j\geq0$ and $0\leq r_j<p^j$. We call $\rho_j$ an {\it $m$-admissible} reflection if $p\nmid\lambda_j$. A strictly decreasing sequence of positive integers $$m,~\rho_{e_k}(m),~\rho_{e_{k-1}}\rho_{e_k}(m),~\cdots,\rho_{e_1}\rho_{e_2}\cdots\rho_{e_k}(m)$$
is called  {\it $m$-admissible} if the following conditions are satisfied:

\noindent(a) $0<e_1<e_2<\cdots<e_k$;

\noindent(b) For each $0\leq j\leq k$, $\rho_{e_j}$ is $\rho_{e_{j+1}}\rho_{e_{j+2}}\cdots\rho_{e_k}(m)$-admissible.

\noindent For convenience, in the following we simply call $\boldsymbol{e}=(e_1,\cdots,e_k)$ $m$-admissible (by abuse of terminology) if these conditions are satisfied, and set $\rho_{\boldsymbol{e}}=\rho_{e_1}\cdots\rho_{e_k}$.

Let $\mathbb{S}(m)=\{\rho_{\boldsymbol{e}}(m)|\boldsymbol{e}~\mbox{is}~m\op{-admissible}\}$. Following \cite{De}, there is a partial order $\preccurlyeq$ on $\mathbb{S}(m)$ defined as follows:
for $\lambda_1,\lambda_2\in\mathbb{S}(m)$, write $\frac{m-\lambda_2}{2}=\sum_tm_tp^t$ and $\frac{m-\lambda_1}{2}=\sum_tn_tp^t$ for their $p$-adic expansion.  Set $\lambda_1\succcurlyeq\lambda_2$ if $m_t=0$ implies $n_t=0$ for all $t$. The following proposition is the dual version of the main theorem in \cite{De}.

\begin{Prop}[{\cite[pp.251 Section 2]{De}}]\label{De}
Let $v_i$ be the weight vector of $m-2i$ in $H^0(m)$. Then every submodule of $H^0(m)$ has the form
$$L_{E}=\sum_{\mu\in E}\Bbbk{\bf G}v_{(m-\mu)/2}$$
for some subset $E$ of $\mathbb{S}(m)$. Moreover, if $\nu\in\mathbb{S}(m)$, then $L(\nu)$ is a composition factor of $L_E$ if and only if $\nu\succcurlyeq\mu$ for some $\mu\in E$.
\end{Prop}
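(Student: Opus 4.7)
The plan is to exploit the weight-space decomposition of $H^0(m)$ together with Lucas's theorem (Lemma \ref{Binomial}) and the composition-factor list from Lemma \ref{cfsl2}. First, since ${\bf T}$ acts semisimply on $H^0(m)$ with one-dimensional weight spaces spanned by $v_0,\dots,v_m$, every $\Bbbk{\bf G}$-submodule $N$ is automatically ${\bf T}$-stable, hence of the form $N=\sum_{i\in I_N}\Bbbk v_i$ for some $I_N\subseteq\{0,\dots,m\}$, and consequently $N=\sum_{i\in I_N}\Bbbk{\bf G} v_i$. The classification of all submodules therefore reduces to the description of the individual cyclic pieces $\Bbbk{\bf G} v_i$.

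Next, I would compute each $\Bbbk{\bf G} v_i$ explicitly using the formulas for the action of $u_a$, $\dot s$, and $h(t)$ on $v_k$ displayed before Lemma \ref{inclusion}. Because $\Bbbk$ is infinite, the ${\bf T}$-action allows Vandermonde-type interpolation to isolate individual weight components; one deduces that $v_j\in\Bbbk{\bf G} v_i$ precisely when binomials of the form $\binom{i}{k}$ and $\binom{m-i}{k}$ are non-zero modulo $p$ for suitable $k$. Applying Lucas's theorem converts this non-vanishing into a digit-wise comparison between the $p$-adic expansions of $i$, $j$, and $m$. Combining with Lemma \ref{cfsl2}, I would then identify, for each $\mu\in\mathbb{S}(m)$ with $i_\mu=(m-\mu)/2$, the composition factors of $\Bbbk{\bf G} v_{i_\mu}$ by induction on the length of the admissible sequence $\boldsymbol e$ satisfying $\rho_{\boldsymbol e}(m)=\mu$: each admissible reflection $\rho_{e_j}$ modifies a single $p$-adic digit of $(m-\mu)/2$ in a way that exactly matches the digit-wise definition of $\succcurlyeq$. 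The conclusion is that $\Bbbk{\bf G} v_{i_\mu}$ has composition factors exactly $\{L(\nu):\nu\succcurlyeq\mu\}$; taking sums then yields the description of $L_E$, and conversely any $N$ arises as $L_E$ by setting $E=\{\mu\in\mathbb{S}(m):v_{i_\mu}\in N\}$.

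The main obstacle is matching the raw binomial non-vanishing coming from Lucas's theorem with the combinatorial definition of the admissible reflections $\rho_{e_j}$, which operate on the $p$-adic digits of $m+1$. The interaction of the ``upper-triangular'' binomials $\binom{i}{k}$ and the ``Weyl-reflected'' binomials $\binom{m-i}{k}$ produced by $\dot s$-conjugation requires delicate bookkeeping, and it is precisely at this point that the combinatorial structure of $m$-admissible sequences enters essentially. An alternative route would be to invoke the Steinberg tensor product to decompose $H^0(m)$ (after writing $m+1$ in base $p$) and recast the digit-bookkeeping as a tensor-product calculation; either way, the heart of the argument lies in translating cyclic generators into $p$-adic digit data.
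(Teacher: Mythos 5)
First, note that the paper does not actually prove this proposition: it is quoted (in dual form) from Deriziotis \cite{De}, so you are attempting to reprove an external result rather than to reconstruct an argument appearing in the paper. Your opening reduction is sound: the abstract torus acts on $H^0(m)$ with pairwise distinct characters on one-dimensional weight lines, so linear independence of characters shows that every $\Bbbk{\bf G}$-submodule is the sum of the weight lines it contains, and everything reduces to computing the cyclic modules $\Bbbk{\bf G}v_i$. The displayed formulas for $u_a$, $\dot s$, $h(t)$ together with interpolation over the infinite field do show that $\Bbbk{\bf G}v_i$ is the span of the $v_j$ with $j$ in the smallest subset of $\{0,\dots,m\}$ containing $i$ that is closed under the two moves $j\mapsto k$ whenever $\binom{j}{k}\not\equiv 0\pmod p$ (Lucas) and $j\mapsto m-j$.

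The genuine gap is that you never compute this closure. One round of each move does not suffice: it is the alternation of digit-subset moves with the reflection $j\mapsto m-j$ that produces the increasing sequences $e_1<\cdots<e_k$, and identifying the iterated closure with the set $\{(m-\nu)/2:\nu\succcurlyeq\mu\}$ --- together with the fact that the generators of an arbitrary submodule can always be chosen among the extremal indices $(m-\mu)/2$ with $\mu\in\mathbb{S}(m)$ --- is precisely the content of \cite{De}. Moreover your description of $\rho_{e_j}$ as modifying a single $p$-adic digit is inaccurate: $(m-\rho_j(m))/2=r_j$ is the residue of $m+1$ modulo $p^j$, which involves all digits below $p^j$, and the admissibility condition $p\nmid\lambda_j$ governing which reflections are allowed is exactly the bookkeeping you defer to the ``main obstacle.'' As written, the proposal is a plausible plan whose essential combinatorial step is named but not executed, so it does not constitute a proof.
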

\noindent In particular, we have
\begin{Cor}\label{cfsl2}
The irreducible module $L(m')$ is a composition factor of $H^0(m)$ if and only if $m'=\rho_{\boldsymbol{e}}(m)$ for some $m$-admissible sequence $\boldsymbol{e}$.
\end{Cor}

\noindent From here to the end of this section, we write $q=p^d$. For $r$ with $q^r>\lambda$, set $\mu_r=q^r-1-\lambda$. Let $l$ be the the largest number such that the coefficient in the $p$-adic expansion of $\lambda$ of $p^l$ is nonzero.
\begin{lemma}\label{lambdarho}
Let $\boldsymbol{e}$ be a $\lambda-1$-admissible sequence. If $h$ is an integer which satisfies $l< h < rd$, then there is an integer $0<\lambda_{\boldsymbol{e}}\le\lambda$ which is independent of $h$ such that
$(\mu_r-\rho_{\boldsymbol{e}}\rho_h(\mu_r))/2=p^h-\lambda_{\boldsymbol{e}}$.
In particular, if $h<h'$, then
$\rho_{\boldsymbol{e}}\rho_h(\mu_r)\succcurlyeq\rho_{\boldsymbol{e}}\rho_{h'}(\mu_r)$.
\end{lemma}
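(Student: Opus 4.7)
The plan is to exploit the clean $p$-adic structure of $\mu_r + 1 = p^{rd} - \lambda$, which, since $\lambda < p^{l+1} \leq p^h$, decomposes as $(p^{rd} - p^h) + (p^h - \lambda)$. First I would compute $\rho_h(\mu_r)$ directly from this decomposition: it reads off $r_h = p^h - \lambda$ and $\lambda_h = p^{rd - h} - 1$. Since $h < rd$, we have $\lambda_h \equiv -1 \pmod p$, so $\rho_h$ is $\mu_r$-admissible and $\rho_h(\mu_r) = \mu_r - 2(p^h - \lambda)$.

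The heart of the argument is to show that applying $\boldsymbol{e}$ on top of $\rho_h$ agrees, up to a harmless constant $p^{rd} - 2p^h$, with applying $\boldsymbol{e}$ to $\lambda - 1$. The key point is that $\rho_h(\mu_r) - (\lambda - 1) = p^{rd} - 2p^h$ is divisible by $p^h$, and $(\lambda - 1)$-admissibility forces $e_k \leq l$ (otherwise $p \mid \lambda_{e_k}$ trivially), so every $e_j < h$. Running parallel chains $m_i$ starting from $\mu_r$ (with $\rho_h$ applied first) and $n_i$ starting from $\lambda - 1$, I would show by induction that $m_{i+1} - n_i$ stays equal to $p^{rd} - 2p^h$: at each step the residue $r_{e_j}$ is the same in both chains (the two $+1$ shifts are congruent mod $p^{e_j}$), and the admissibility quotients differ by $p^{rd - e_j} - 2p^{h - e_j}$, which is divisible by $p$ precisely because both $rd > e_j$ and $h > e_j$. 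This simultaneously proves $(\boldsymbol{e}, h)$ is $\mu_r$-admissible and yields $\rho_{\boldsymbol{e}}\rho_h(\mu_r) = \rho_{\boldsymbol{e}}(\lambda - 1) + p^{rd} - 2p^h$. Rearranging, $(\mu_r - \rho_{\boldsymbol{e}}\rho_h(\mu_r))/2 = p^h - \lambda_{\boldsymbol{e}}$ with $\lambda_{\boldsymbol{e}} := (\lambda + 1 + \rho_{\boldsymbol{e}}(\lambda - 1))/2$; this is an integer (both summands in the numerator have the same parity as $\lambda$), lies in $[1, \lambda]$, and depends only on $\boldsymbol{e}$ and $\lambda$.

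For the second assertion I would unpack the partial order $\succcurlyeq$ by explicitly $p$-adically expanding $p^h - \lambda_{\boldsymbol{e}}$. Since $\lambda_{\boldsymbol{e}} - 1 < p^{l+1}$ has digits supported in positions $\leq l$, subtracting from $p^h - 1 = \sum_{t=0}^{h-1}(p-1)p^t$ shows that the digits of $p^h - \lambda_{\boldsymbol{e}}$ are determined by $\lambda_{\boldsymbol{e}} - 1$ on positions $\{0, \ldots, l\}$, equal $p - 1$ on $\{l+1, \ldots, h - 1\}$, and vanish for $t \geq h$. Thus the zero-digit set of $p^h - \lambda_{\boldsymbol{e}}$ differs from that of $p^{h'} - \lambda_{\boldsymbol{e}}$ only in its tail, and the inclusion $\{t \geq h'\} \subseteq \{t \geq h\}$ translates directly, via the definition of $\succcurlyeq$, into $\rho_{\boldsymbol{e}}\rho_h(\mu_r) \succcurlyeq \rho_{\boldsymbol{e}}\rho_{h'}(\mu_r)$.

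The main technical obstacle is the inductive step in the second paragraph: one has to verify that admissibility of each $\rho_{e_j}$ is genuinely transported between the two chains despite the shift $p^{rd} - 2p^h$. This reduces to the divisibility $p \mid p^{rd - e_j} - 2p^{h - e_j}$, which requires both $rd > e_j$ and, crucially, $h > e_j$; the latter is precisely where the hypothesis $l < h$ is used, via the preceding observation that $(\lambda - 1)$-admissibility forces $e_k \leq l$.
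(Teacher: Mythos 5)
Your proposal is correct and follows essentially the same route as the paper: compute $\rho_h(\mu_r)=(p^{rd}-2p^h)+(\lambda-1)$, use that the shift $p^{rd}-2p^h$ is divisible by $p^h$ while all $e_j\le l<h$ to transport the $\boldsymbol{e}$-chain from $\lambda-1$ to $\rho_h(\mu_r)$, and then read off the partial order from the $p$-adic digits of $p^h-\lambda_{\boldsymbol{e}}$. You are somewhat more explicit than the paper (which compresses this into "direct calculation"), in particular in verifying the admissibility transport and the digit comparison, but the underlying argument is identical.
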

\begin{proof}
Direct calculation shows that
\begin{equation}\label{lambda-1}
\rho_h(\mu_r)=Q(r,h)+\lambda-1,
\end{equation}
where $Q(r,h)=(p-1)(p^{rd-1}+\cdots+p^{h+1})+(p-2)p^h$, and
$(\mu_r-\rho_h(\mu_r))/2=p^h-\lambda$.
 Since $\lambda<p^h$ and $p^h|Q(r,h)$, we have
 $\rho_{\boldsymbol{e}}\rho_h(\mu_r)=Q(r,h)+\rho_{\boldsymbol{e}}(\lambda-1)$,
 and hence
$$(\mu_r-\rho_{\boldsymbol{e}}\rho_h(\mu_r))/2=(\mu_r-\rho_h(\mu_r))/2+(\lambda-1-\rho_{\boldsymbol{e}}(\lambda-1))/2.$$
We take
$\lambda_{\boldsymbol{e}}=\lambda-(\lambda-1-\rho_{\boldsymbol{e}}(\lambda-1))/2$
as desired.
\end{proof}

\noindent Now Corollary \ref{cfsl2} becomes

\begin{lemma}\label{cfsl22}
The composition factors of $H^0(\mu_r)$ are all $L(\rho_{\boldsymbol{f}}(\mu_r))$ and $L(\rho_{\boldsymbol{e}}\rho_h(\mu_r))$, where
$\boldsymbol{f}=(f_1,\cdots,f_k)$ is $\mu_r$-admissible and $f_k\leq l$, and $\boldsymbol{e}$ is $\lambda-1$-admissible and $l< h < rd$.
\end{lemma}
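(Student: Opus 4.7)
The plan is to apply Lemma \ref{cfsl2}, which identifies the composition factors of $H^0(\mu_r)$ with the simple modules $L(\rho_{\boldsymbol{g}}(\mu_r))$ indexed by $\mu_r$-admissible sequences $\boldsymbol{g}=(g_1,\dots,g_k)$. The task then reduces to classifying these admissible sequences into two types: those with top index $g_k\le l$ (which are by definition the $\boldsymbol{f}$ of the statement), and those with $g_k>l$, which we must identify with the $(\boldsymbol{e},h)$ of the statement.

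First I would compute the $p$-adic expansion of $\mu_r+1=q^r-\lambda$. Since $\lambda<p^{l+1}\le p^{rd}=q^r$, the subtraction borrows from position $rd$ down through position $l+1$, yielding digit $p-1$ in every position $l+1,\dots,rd-1$, the digits of $p^{l+1}-\lambda$ in positions $0,\dots,l$, and $0$ in positions $\ge rd$. It is immediate from this that for every $h$ with $l<h<rd$, $\lambda_h=p^{rd-h}-1\not\equiv 0\pmod p$, so each $\rho_h$ with $l<h<rd$ is $\mu_r$-admissible; while $\lambda_h=0$ for $h\ge rd$, which rules out $g_k\ge rd$.

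Next I would isolate the crucial constraint: if $g_k>l$, then $g_{k-1}\le l$. Using $\rho_h(\mu_r)=Q(r,h)+(\lambda-1)$ from Lemma \ref{lambdarho}, the $p$-adic digits of $\rho_h(\mu_r)+1=Q(r,h)+\lambda$ are the digits of $\lambda$ in positions $0,\dots,l$, zero in positions $l+1,\dots,h-1$, $p-2$ in position $h$, and $p-1$ in positions $h+1,\dots,rd-1$. Thus if $l<g_{k-1}<h$, the quotient $\lfloor(\rho_h(\mu_r)+1)/p^{g_{k-1}}\rfloor$ has vanishing digits in positions $0,\dots,h-g_{k-1}-1$, hence is divisible by $p$, so $\rho_{g_{k-1}}$ fails to be admissible for $\rho_h(\mu_r)$. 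This forces all entries $g_1,\dots,g_{k-1}$ to be $\le l$.

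Finally, with $g_k=h$, $l<h<rd$, and all other $g_j\le l$, I would establish the equivalence between $\boldsymbol{g}$ being $\mu_r$-admissible and $(g_1,\dots,g_{k-1})$ being $(\lambda-1)$-admissible. By the same induction as in the proof of Lemma \ref{lambdarho}, one has $\rho_{g_{j+1}}\cdots\rho_{g_{k-1}}\rho_h(\mu_r)=Q(r,h)+\rho_{g_{j+1}}\cdots\rho_{g_{k-1}}(\lambda-1)$. Since $p^h\mid Q(r,h)$ and the right-hand summand is $\le\lambda<p^h$, no carry occurs, so dividing by $p^{g_j}$ with $g_j\le l<h$ gives $\lfloor Q(r,h)/p^{g_j}\rfloor+\lfloor(\rho_{g_{j+1}}\cdots\rho_{g_{k-1}}(\lambda-1)+1)/p^{g_j}\rfloor$, and the first summand is divisible by $p$. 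Hence admissibility of $\rho_{g_j}$ at the $\mu_r$-level is equivalent to admissibility at the $(\lambda-1)$-level, proving both directions of the classification at once. The main obstacle will be the meticulous tracking of $p$-adic digits, in particular verifying the no-carry claim throughout the induction and handling edge cases such as $p=2$ (where $p-2=0$) consistently.
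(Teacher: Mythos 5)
Your proposal is correct and takes essentially the same route as the paper, whose own proof of this lemma is simply the assertion that it ``follows immediately from (\ref{lambda-1}) and Lemma \ref{cfsl2}.'' You have supplied the underlying $p$-adic digit bookkeeping that the paper leaves implicit: classifying $\mu_r$-admissible sequences according to whether their largest entry exceeds $l$, showing any entry in the range $(l,rd)$ must be the top one, and reducing the remaining tail to $(\lambda-1)$-admissibility through the no-carry identity $\rho_j\bigl(Q(r,h)+m'\bigr)=Q(r,h)+\rho_j(m')$ together with the observation that $Q(r,h)/p^{g_j}$ is divisible by $p$.
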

\begin{proof}
 Let $L(\lambda)$ be a composition factor of $H^0(\mu_r)$. Then $\lambda=\rho_{n_1}\cdots\rho_{n_k}(\mu_r)$, where $(n_1,\cdots,n_k)$ is $\mu_r$-admissible. These already contain $L(\rho_{\boldsymbol{f}}(\mu_r))$ with $\boldsymbol{f}=(f_1,\cdots,f_k)$ is $\mu_r$-admissible and $f_k\leq l$. (Recall: $l$ be the the largest number such that the coefficient in the $p$-adic expansion of $\lambda$ of $p^l$ is nonzero). Consider the remaining factors (the case $h:=n_k>l$). We claim that $n_{k-1}\le l$. In fact, suppose $n_{k-1}>l$. By the proof of Lemma 5.10, $\rho_h(\mu_r)+1=Q(r,h)+\lambda$, where $p^h|Q(r,h)$. Thus $\rho_h(\mu_r)+1=\lambda_hp^{n_{k-1}}+\lambda$, with $p^{h-n_{k-1}}|\lambda_h$ (implies $p|\lambda_h$). This contradicts to ``$\rho_{n_{k-1}}$ is $\rho_h(\mu_r)$-admissible". Therefore $n_1<\cdots<n_{k-1}\le l$ and $(n_1,\cdots,n_{k-1})$ is $\lambda-1$-admissible since $\rho_{n_1}\cdots\rho_{n_{k-1}}\rho_h(\mu_r)=Q(r,h)+\rho_{n_1}\cdots\rho_{n_{k-1}}(\lambda-1)$.
\end{proof}

From here to the end of the proof of Lemma \ref{rs}, we choose $r\in\mathbb{Z}_{>0}$ so that $q^r>\lambda$ and $s=rt$ with $t>1$. For each $\mu_s$-admissible sequence $\boldsymbol{f}=(f_1,\cdots,f_k)$ with $f_k\leq l$, set
$j_{\boldsymbol{f}}=(\mu_s-\rho_{\boldsymbol{f}}(\mu_s))/2$.

\begin{lemma}\label{rs}
Let $\boldsymbol{e}$ be a $\lambda-1$-admissible sequence, and $i_{\boldsymbol{e}}$ be the number such that $p^{i_{\boldsymbol{e}}}|\lambda_{\boldsymbol{e}}$ and $p^{i_{\boldsymbol{e}}+1}\nmid\lambda_{\boldsymbol{e}}$. Then
$$v_{p^{(t-1)rd+i_{\boldsymbol{e}}}-\lambda_{\boldsymbol{e}}}(q^s)\in\Bbbk G_{q^s}v_0(q^r)$$
for any $\lambda-1$-admissible sequence $\boldsymbol{e}$, and
$$\Bbbk G_{q^s}v_0(q^r)\subset M:=\sum_{\boldsymbol{e}}\Bbbk G_{q^s}v_{p^{(t-1)rd+i_{\boldsymbol{e}}}-\lambda_{\boldsymbol{e}}}(q^s)
+\sum_{\boldsymbol{f}}\Bbbk G_{q^s}v_{j_{\boldsymbol{f}}}(q^s),$$
where $\boldsymbol{e}$ runs over all $\lambda-1$--admissible sequences, and $\boldsymbol{f}$ runs over all $\mu_s$-admissible sequences $(f_1,\cdots,f_k)$ with $f_k\leq l$.
\end{lemma}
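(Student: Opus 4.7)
The plan is to use Lemma \ref{extend} to decompose $v_0(q^r)$ into weight vectors of $H^0(\mu_s)$, then combine Lemma \ref{wtvec} with the submodule description in Proposition \ref{De} to track exactly which weight vectors lie in $\Bbbk G_{q^s} v_0(q^r)$.

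First I would apply Lemma \ref{extend} with $\tilde{q}=q^r$ and exponent $t$ to obtain
$$v_0(q^r) = \sum_{k=0}^{N} v_{k(q^r-1)}(q^s), \qquad N := q^r + q^{2r} + \cdots + q^{(t-1)r}.$$
Since $\Bbbk G_{q^s}v_0(q^r)$ is a $G_{q^s}$-submodule of $H^0(\mu_s)$, Lemma \ref{wtvec} extracts the weight components, giving $v_{k(q^r-1)}(q^s)\in \Bbbk G_{q^s}v_0(q^r)$ for every $0\le k\le N$. Moreover the highest weights of all composition factors of $H^0(\mu_s)$ are at most $\mu_s<q^s$, so Theorem \ref{indiso} identifies $G_{q^s}$-submodules with ${\bf G}$-submodules, and hence $\Bbbk G_{q^s}v_0(q^r)=\sum_{k=0}^{N}\Bbbk {\bf G}\,v_{k(q^r-1)}(q^s)$.

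For assertion (1), fix a $\lambda-1$-admissible sequence $\boldsymbol{e}$. Applying Lemma \ref{lambdarho} to $\mu_s$ (its hypothesis $l<h<sd$ is satisfied because $h=(t-1)rd+i_{\boldsymbol{e}}$ and $i_{\boldsymbol{e}}\le l<rd$), one recognises the target index $p^{(t-1)rd+i_{\boldsymbol{e}}}-\lambda_{\boldsymbol{e}}$ as $(\mu_s-\rho_{\boldsymbol{e}}\rho_{(t-1)rd+i_{\boldsymbol{e}}}(\mu_s))/2$, so the target weight lies in $\mathbb{S}(\mu_s)$. I would then select $k_{\boldsymbol{e}}=p^{i_{\boldsymbol{e}}}(1+q^r+\cdots+q^{(t-2)r})$; since $p^{i_{\boldsymbol{e}}}\le p^l\le\lambda<q^r$, this $k_{\boldsymbol{e}}$ lies in $[0,N]$, and $k_{\boldsymbol{e}}(q^r-1)=p^{(t-1)rd+i_{\boldsymbol{e}}}-p^{i_{\boldsymbol{e}}}$. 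Proposition \ref{De} then reduces the desired membership $v_{p^{(t-1)rd+i_{\boldsymbol{e}}}-\lambda_{\boldsymbol{e}}}(q^s)\in \Bbbk{\bf G}\,v_{k_{\boldsymbol{e}}(q^r-1)}(q^s)$ to a comparison of two weights under the order $\succcurlyeq$; that comparison is a direct $p$-adic computation using Lemma \ref{Binomial}.

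For assertion (2), I would show $\Bbbk{\bf G}\,v_{k(q^r-1)}(q^s)\subset M$ for every $0\le k\le N$ by inspecting the $p$-adic expansion of $k(q^r-1)$. Each such cyclic module equals some $L_{E_k}$ by Proposition \ref{De}, and by Lemma \ref{cfsl22} the $\succcurlyeq$-minimal elements of $E_k$ are of one of two types: either $\rho_{\boldsymbol{f}}(\mu_s)$ with $f_k\le l$, which places the corresponding generator in $\Bbbk G_{q^s}v_{j_{\boldsymbol{f}}}(q^s)\subset M$; or $\rho_{\boldsymbol{e}'}\rho_{h}(\mu_s)$ with $l<h<sd$, in which case the monotonicity furnished by Lemma \ref{lambdarho} forces $h\ge(t-1)rd+i_{\boldsymbol{e}'}$, so that the generator is already contained in $\Bbbk G_{q^s}v_{p^{(t-1)rd+i_{\boldsymbol{e}'}}-\lambda_{\boldsymbol{e}'}}(q^s)\subset M$. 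The main obstacle is the combinatorial $p$-adic analysis underlying this dichotomy: one must carefully expand $k(q^r-1)$ in base $p$, compare with the canonical form $p^h-\lambda_{\boldsymbol{e}'}$ dictated by Lemma \ref{lambdarho}, and extract the lower bound $h\ge(t-1)rd+i_{\boldsymbol{e}'}$. Everything else is essentially bookkeeping once the sets $E_k$ are understood.
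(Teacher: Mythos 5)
Your proof of the first inclusion is essentially the paper's: you pick $k_{\boldsymbol{e}}=p^{i_{\boldsymbol{e}}}(1+q^r+\cdots+q^{(t-2)r})$, observe $k_{\boldsymbol{e}}(q^r-1)=p^{(t-1)rd+i_{\boldsymbol{e}}}-p^{i_{\boldsymbol{e}}}$, and reduce to the nonvanishing mod $p$ of $\binom{p^{(t-1)rd+i_{\boldsymbol{e}}}-p^{i_{\boldsymbol{e}}}}{p^{(t-1)rd+i_{\boldsymbol{e}}}-\lambda_{\boldsymbol{e}}}$, which Lemma \ref{Binomial} supplies. One caveat: the tool for this membership is not Proposition \ref{De} --- its $\succcurlyeq$-criterion only speaks about indices $(\mu_s-\mu)/2$ with $\mu\in\mathbb{S}(\mu_s)$, and $\mu_s-2k_{\boldsymbol{e}}(q^r-1)$ need not lie in $\mathbb{S}(\mu_s)$ --- but rather the unipotent action (\ref{acofu}) together with Lemma \ref{wtvec}.

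The second inclusion has a genuine gap. First, your order comparison is reversed: by Lemma \ref{lambdarho}, $h<h'$ gives $\rho_{\boldsymbol{e}'}\rho_h(\mu_s)\succcurlyeq\rho_{\boldsymbol{e}'}\rho_{h'}(\mu_s)$, and by Proposition \ref{De} this yields $\Bbbk{\bf G}v_{p^h-\lambda_{\boldsymbol{e}'}}\subseteq\Bbbk{\bf G}v_{p^{h'}-\lambda_{\boldsymbol{e}'}}$. Hence a generator $\rho_{\boldsymbol{e}'}\rho_h(\mu_s)$ of $E_k$ lies in $M$ precisely when $h\le(t-1)rd+i_{\boldsymbol{e}'}$; from $h\ge(t-1)rd+i_{\boldsymbol{e}'}$ you only get that the generator of $M$ sits inside $\Bbbk{\bf G}v_{p^h-\lambda_{\boldsymbol{e}'}}$, which is the containment in the useless direction. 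Second, the needed upper bound on $h$ is exactly the hard point, and your outline does not supply it (it even states the opposite inequality as the target). The paper obtains it by contradiction: if $v_{k(q^r-1)}(q^s)\notin M$, its weight lies in a composition factor $L(\nu)$ of $H^0(\mu_s)/M$ with $\nu=\rho_{\boldsymbol{e}}\rho_h(\mu_s)$ and $h>(t-1)rd+i_{\boldsymbol{e}}$; then the nonvanishing of $\binom{k(q^r-1)}{p^h-\lambda_{\boldsymbol{e}}}$ combined with the divisibility $q^r-1\mid k(q^r-1)$ forces every relevant $p$-adic digit to equal $p-1$, giving $k=1+q^r+\cdots+q^{(t-1)r}$, which exceeds the admissible range. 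The use of divisibility by $q^r-1$ is the essential ingredient and is absent from your sketch. Finally, your direct route needs a complete description of the generating set $E_k$ of $\Bbbk{\bf G}v_{k(q^r-1)}(q^s)$, including generators of weight below $\mu_s-2k(q^r-1)$, which the quoted lemmas do not provide; arguing through composition factors of the quotient $H^0(\mu_s)/M$, as the paper does, sidesteps this.
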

\begin{proof}
Since the coefficient of $p^i$ in $p$-adic expansion of $p^{(t-1)rd+i_{\boldsymbol{e}}}-\lambda_{\boldsymbol{e}}$ is zero if $i>(t-1)rd+i_{\boldsymbol{e}}-1$ or $i<i_{\boldsymbol{e}}$ by assumption, we have
{\small
$$\left({p^{i_{\boldsymbol{e}}}(1+q^r+\cdots+q^{(t-2)r})(q^r-1)}\atop{p^{(t-1)rd+i_{\boldsymbol{e}}}-\lambda_{\boldsymbol{e}}}\right)
=\left({(p-1)(p^{i_{\boldsymbol{e}}}+
\cdots+p^{(t-1)rd+i_{\boldsymbol{e}}-1})}\atop{p^{(t-1)rd+i_{\boldsymbol{e}}}-
\lambda_{\boldsymbol{e}}}\right)\neq0\quad(\op{mod}p)$$}
according to Lemma \ref{Binomial}.
Combining this and $$p^{i_{\boldsymbol{e}}}(1+q^r+\cdots+q^{(t-2)r})<q^r+\cdots+q^{(t-1)r}$$
yields $v_{p^{(t-1)rd+i_{\boldsymbol{e}}}-\lambda_{\boldsymbol{e}}}(q^s)\in\Bbbk G_{q^s}v_0(q^r)$ by Lemma \ref{extend} and \ref{wtvec}.

To show the second statement, we have to show that $v_{k(q^r-1)}(q^s)\in M$ for all $k\leq q^r+\cdots+q^{(t-1)r}$ by Lemma \ref{extend} and \ref{wtvec}. Suppose that $v_{k(q^r-1)}(q^s)\not\in M$ for some $k\leq q^r+\cdots+q^{(t-1)r}$. Then $v_{k(q^r-1)}(q^s)$ is a weight vector of some composition factor $L(\nu)$ of $H^0(\mu_s)/M$. It follows that $\left({k(q^r-1)}\atop{(\mu_s-\nu)/2}\right)\neq0~(\op{mod}p)$ by Lemma \ref{wtvec} and (\ref{acofu}). Combining Proposition \ref{De}, Lemma \ref{lambdarho} and \ref{cfsl22} yields $\frac{\mu_s-\nu}{2}=p^h-\lambda_{\boldsymbol{e}}$ for some $\lambda-1$-admissible sequence $\boldsymbol{e}$ and $h>(t-1)rd+i_{\boldsymbol{e}}$. Denote $h=(t-1)rd+i$, where $i>i_{\boldsymbol{e}}$. Thus, we have
\begin{equation}\label{suppose}
\left({k(q^r-1)}\atop{p^{(t-1)rd+i}-\lambda_{\boldsymbol{e}}}\right)\neq0\quad(\op{mod}p)
\end{equation}
by Lemma \ref{Binomial}.
From the $p$-adic expansion of $p^{(t-1)rd+i}-\lambda_{\boldsymbol{e}}$ and Lemma \ref{Binomial} we see that
\begin{equation}\label{sumsumsum}
k(q^r-1)=\sum_{i\leq j\leq rd-1}c_jp^{(t-1)rd+j}+(p-1)\sum_{l+1\leq j\leq(t-1)rd+i-1}p^j+\sum_{0\leq j\leq l}c_j'p^j
\end{equation}
for some $0\leq c_j,c_j'<p$ and $c_{i_{\boldsymbol{e}}}'\neq0$, where $l$ is the number introduced in Lemma \ref{lambdarho}. Denote RHS the right hand side of (\ref{sumsumsum}). We have
$$
\aligned
\op{RHS}&\ =\sum_{i\leq j\leq rd-1}c_jp^{(t-1)rd+j}+p^{(t-1)rd+i}-p^{l+1}+\sum_{0\leq j\leq l}c_j'p^j\\
&\ \equiv \sum_{i\leq j\leq rd-1}c_jp^j+p^i-p^{l+1}+\sum_{0\leq j\leq l}c_j'p^j\quad(\op{mod}q^r-1)
\endaligned
$$
Since $c_{i_{\boldsymbol{e}}}'\neq0$ and $i>i_{\boldsymbol{e}}$, we have
\begin{equation}\label{RHS}
1-q^r<\sum_{i\leq j\leq rd-1}c_jp^j+p^i-p^{l+1}+\sum_{0\leq j\leq l}c_j'p^j<q^r
\end{equation}
and
\begin{equation}\label{RHS2}
\sum_{i\leq j\leq rd-1}c_jp^j+p^i-p^{l+1}+\sum_{0\leq j\leq l}c_j'p^j\neq0.
\end{equation}
Combining the fact that $q^r-1|\op{RHS}$ with (\ref{suppose}), (\ref{RHS}), and (\ref{RHS2}), we see that all $c_j,c_j'$ are $p-1$, in which case
$$k(q^r-1)=(p-1)\sum_{0\leq j\leq rtd-1}p^j=q^{rt}-1,$$
and hence $k=1+q^r+\cdots+q^{(t-1)r}$ which contradicts to the assumption. This completes the proof.
\end{proof}

\begin{lemma}\label{bat}
If $b=at$, $t>1$, and $q^a>\lambda$, then $\Bbbk{\bf G}v_0(q^b)\subsetneq \Bbbk{\bf G}v_0(q^a)$.
\end{lemma}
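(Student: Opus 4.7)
The plan is to work inside the finite-dimensional costandard module $H^0(\mu_b)$ and deduce the lemma from a strict inclusion of submodules there. Since every composition factor of $H^0(\mu_b)$ has highest weight at most $\mu_b = q^b - 1 - \lambda < q^b$, Theorem~\ref{indiso} identifies $G_{q^b}$-submodules of $H^0(\mu_b)$ with ${\bf G}$-submodules; hence $\Bbbk{\bf G}v_0(q^a) \cap H^0(\mu_b) = \Bbbk G_{q^b}v_0(q^a)$, and likewise for $v_0(q^b)$. It therefore suffices to establish the strict inclusion $\Bbbk G_{q^b}v_0(q^b) \subsetneq \Bbbk G_{q^b}v_0(q^a)$ inside $H^0(\mu_b)$.

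For the inclusion, I would first expand via Lemma~\ref{extend}:
\[
v_0(q^a) \;=\; \sum_{k=0}^{q^a + \cdots + q^{(t-1)a}} v_{k(q^a-1)}(q^b),
\]
a sum of weight vectors of pairwise distinct ${\bf T}$-weights in $H^0(\mu_b)$. Applying Lemma~\ref{wtvec} to the $G_{q^b}$-submodule $\Bbbk G_{q^b}v_0(q^a)$ then forces each summand to lie in that submodule; in particular the $k = 0$ term $v_0(q^b)$ does. Because $v_0(q^b)$ spans the highest weight space of $H^0(\mu_b)$, Proposition~\ref{De} (with $E = \{\mu_b\}$) identifies $\Bbbk{\bf G}v_0(q^b)$ with the simple module $L(\mu_b)$, and the containment $\Bbbk{\bf G}v_0(q^b) \subseteq \Bbbk{\bf G}v_0(q^a)$ follows.

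For strictness, I would invoke Lemma~\ref{rs} with $r = a$, $s = b$, taking $\boldsymbol{e}$ to be the empty $(\lambda-1)$-admissible sequence. The formula in Lemma~\ref{lambdarho} gives $\lambda_\emptyset = \lambda$, and $i_\emptyset$ is then the $p$-adic valuation of $\lambda$. With $h := (t-1)ad + i_\emptyset$, Lemma~\ref{rs} produces
\[
v_{p^h - \lambda}(q^b) \;\in\; \Bbbk G_{q^b}v_0(q^a).
\]
Writing the base-$p$ expansion $\lambda = \sum_{i=i_\emptyset}^{l} c_i p^i$ with $c_{i_\emptyset}, c_l \neq 0$, a direct base-$p$ subtraction (valid because $l < ad \leq h$) shows that $p^h - \lambda$ has digit $p - c_{i_\emptyset}$ at position $i_\emptyset$, whereas the corresponding digit of $\mu_b = q^b - 1 - \lambda$ equals $p - 1 - c_{i_\emptyset}$. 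By Steinberg's tensor product theorem, $L(\mu_b) \cong \bigotimes_i L(d_i)^{[i]}$ where $(d_i)$ are the base-$p$ digits of $\mu_b$, so the weight vectors of $L(\mu_b) \subset H^0(\mu_b)$ are precisely the $v_j(q^b)$ whose digits satisfy $j_i \leq d_i$ for all $i$. Since this fails at position $i_\emptyset$, the vector $v_{p^h - \lambda}(q^b)$ does not lie in $L(\mu_b) = \Bbbk{\bf G}v_0(q^b)$, proving strictness.

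The main obstacle is pinpointing the right element that detects the strict inclusion. The choice $\boldsymbol{e} = \emptyset$ in Lemma~\ref{rs} turns out to be exactly what is needed: the weight index $p^h - \lambda$ has its smallest nonzero $p$-adic digit offset by exactly $1$ from that of $\mu_b$, which gives the cleanest possible obstruction to membership in the Steinberg factorization of $L(\mu_b)$. Everything else is an assembly of Lemmas~\ref{extend}, \ref{wtvec}, and \ref{rs} together with Theorem~\ref{indiso} and Proposition~\ref{De}.
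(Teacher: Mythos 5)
Your finite-level computation is correct as far as it goes, and it is essentially the $t'=1$ instance of the paper's argument: applying Lemma~\ref{rs} with $r=a$, $s=b$ and $\boldsymbol{e}=\emptyset$ does yield
$v_{p^{(t-1)ad+i_{\emptyset}}-\lambda}(q^b)\in\Bbbk G_{q^b}v_0(q^a)\setminus\Bbbk G_{q^b}v_0(q^b)$,
so $\Bbbk G_{q^b}v_0(q^b)\subsetneq\Bbbk G_{q^b}v_0(q^a)$. The gap is in your reduction to this single finite level. The subspace $H^0(\mu_b)$ of $H_\lambda^0$ is \emph{not} ${\bf G}$-stable: the transition maps of the direct system in Lemma~\ref{inclusion}(2) are only $G_{q^b}$-equivariant, and an element of ${\bf G}$ outside $G_{q^b}$ moves $v_0(q^b)$ out of $H^0(\mu_b)$. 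Consequently $\Bbbk{\bf G}v_0(q^b)=\bigcup_{b\mid s}\Bbbk G_{q^s}v_0(q^b)$ is an infinite-dimensional module and is certainly not $L(\mu_b)$; your use of Proposition~\ref{De} to identify $\Bbbk{\bf G}v_0(q^b)$ with the simple socle, and of Theorem~\ref{indiso} to compute $\Bbbk{\bf G}v_0(q^a)\cap H^0(\mu_b)$, refer to the rational ${\bf G}$-module structure on $H^0(\mu_b)$, which is not the structure induced from $H_\lambda^0$. In particular, knowing that $v_{p^h-\lambda}(q^b)\notin\Bbbk G_{q^b}v_0(q^b)$ does not exclude that this vector, rewritten in $H^0(\mu_s)$ via Lemma~\ref{extend}, lies in $\Bbbk G_{q^s}v_0(q^b)$ for some larger multiple $s$ of $b$.

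What is needed, and what the paper does, is strictness at \emph{every} level: for each $s=bt'$ one exhibits an element of $\Bbbk G_{q^s}v_0(q^a)$ not in $\Bbbk G_{q^s}v_0(q^b)$, namely $v_{p^{(tt'-1)ad+i_{\boldsymbol{e}}}-\lambda_{\boldsymbol{e}}}(q^s)$, by applying Lemma~\ref{rs} twice --- once with $r=a$ to get membership in $\Bbbk G_{q^s}v_0(q^a)$, and once with $r=b$ to bound $\Bbbk G_{q^s}v_0(q^b)$ from above by the module $M$ there --- and then comparing the relevant weights under $\preccurlyeq$ via Proposition~\ref{De}. Since $\Bbbk G_{q^s}v_0(q^b)\subseteq\Bbbk G_{q^s}v_0(q^a)$, strictness at level $s$ forces $v_0(q^a)\notin\Bbbk G_{q^s}v_0(q^b)$ for every $s$, hence $v_0(q^a)\notin\Bbbk{\bf G}v_0(q^b)$, which is the assertion of the lemma. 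Your $p$-adic digit obstruction at position $i_{\emptyset}$ is the right kind of argument, but it must be run against the upper bound for $\Bbbk G_{q^s}v_0(q^b)$ inside $H^0(\mu_s)$ for all $s$, not against $L(\mu_b)$ inside $H^0(\mu_b)$.
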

\begin{proof}
We have to prove that if $b|s$, then $\Bbbk G_{q^s}v_0(q^b)\subsetneq\Bbbk G_{q^s}v_0(q^a)$ (Since this implies $v_0(q^a)\not\in\Bbbk G_{q^s}v_0(q^b)$ for any multiple $s$ of $b$, and hence $v_0(q^a)\not\in\Bbbk{\bf G}v_0(q^b)$). Write $s=bt'$ and let $\boldsymbol{e}$ be a $\lambda-1$-admissible sequence. By Lemma \ref{rs} we have
\begin{equation}\label{bigabi1}
v_{p^{(tt'-1)ad+i_{\boldsymbol{e}}}-\lambda_{\boldsymbol{e}}}(q^s)\in\Bbbk G_{q^s}v_0(q^a).
\end{equation}
On the other hand, since $\rho_{\boldsymbol{e}'}\rho_{(t'-1)bd+i_{\boldsymbol{e}'}}(\mu_s)\not\preccurlyeq\rho_{\boldsymbol{e}}\rho_{(tt'-1)ad+i_{\boldsymbol{e}}}(\mu_s)$ for any $\lambda-1$-admissible sequence $\boldsymbol{e}'$ (this follows from $(t'-1)bd+i_{\boldsymbol{e}'}<(tt'-1)ad+i_{\boldsymbol{e}}$ since $q^a>\lambda$ and looking at the $p$-adic expansion of both sides), and $\rho_{\boldsymbol{f}}(\mu_s)\not\preccurlyeq \rho_{\boldsymbol{e}}\rho_{(tt'-1)ad+i_{\boldsymbol{e}}}(\mu_s)$ (follows immediately from definition) for any $\mu_s$-admissible sequence $\boldsymbol{f}=(f_1,\cdots,f_k)$ with $f_k\leq l$, we have
\begin{equation}\label{bigabi2}
v_{p^{(tt'-1)ad+i_{\boldsymbol{e}}}-\lambda_{\boldsymbol{e}}}(q^s)\not\in\Bbbk G_{q^s}v_0(q^b)
\end{equation}
by Lemma \ref{De} and \ref{rs}. Combining (\ref{bigabi1}) and (\ref{bigabi2}) yields $\Bbbk G_{q^s}v_0(q^b)\subsetneq\Bbbk G_{q^s}v_0(q^a)$ which completes the proof.
\end{proof}

\begin{Cor}\label{infsl2}
Let ${\bf G}=SL_2(\bar{\mathbb{F}}_q)$ and $\lambda\in\mathbb{Z}_{>0}$. Then the $\Bbbk {\bf G}$-module $\mathbb{M}(\lambda)$ has an infinite submodule filtration.
\end{Cor}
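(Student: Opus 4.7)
The plan is to pull a strictly descending infinite chain of submodules through the short exact sequence $(\ref{exseq})$, so that the conclusion about $\mathbb{M}(\lambda)$ reduces to producing such a chain inside the submodule $H^0_\lambda$. Lemma \ref{bat} is essentially tailor-made for this purpose, since it provides exactly the kind of strict inclusion needed to propagate along a chain indexed by exponents.

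Concretely, I would first fix an integer $a\geq 1$ large enough that $q^a>\lambda$, and then take the geometric sequence $a_i:=a\cdot 2^{i-1}$ for $i\geq 1$. For each $i$ one has $a_i\mid a_{i+1}$, the ratio $a_{i+1}/a_i=2>1$, and $q^{a_i}\geq q^a>\lambda$, so applying Lemma \ref{bat} at each stage with $(a,b,t)$ specialized to $(a_i,a_{i+1},2)$ yields
$$\Bbbk{\bf G}\,v_0(q^{a_{i+1}})\ \subsetneq\ \Bbbk{\bf G}\,v_0(q^{a_i})\qquad(i\geq 1).$$
Concatenating these strict inclusions produces the infinite descending chain
$$\Bbbk{\bf G}\,v_0(q^{a_1})\ \supsetneq\ \Bbbk{\bf G}\,v_0(q^{a_2})\ \supsetneq\ \Bbbk{\bf G}\,v_0(q^{a_3})\ \supsetneq\ \cdots.$$
Since each $v_0(q^{a_i})$ lies in $H^0_\lambda\subset\mathbb{M}(\lambda)$ by construction of the direct limit preceding $(\ref{exseq})$, this is at the same time an infinite submodule filtration of $\mathbb{M}(\lambda)$, and in particular $\mathbb{M}(\lambda)$ has infinite length.

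No substantive obstacle remains at this stage: all of the delicate combinatorial work controlling the submodules $\Bbbk G_{q^s}\,v_0(q^a)\subset H^0(\mu_s)$ via the admissible-reflection calculus has been compressed into Lemmas \ref{lambdarho}, \ref{rs}, and \ref{bat}. The only point that requires any attention is honouring the divisibility hypothesis $b=at$ with $t>1$ of Lemma \ref{bat} at every step of the chain, which is precisely why a geometric sequence of exponents is used here rather than an arbitrary strictly increasing one.
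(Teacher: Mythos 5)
Your proof is correct and follows essentially the same route as the paper: reduce via the exact sequence $(\ref{exseq})$ to exhibiting an infinite strictly descending chain in $H^0_\lambda$, and build that chain by iterating Lemma \ref{bat}. The only (immaterial) difference is the choice of exponents — you use $a\cdot 2^{i-1}$ with $t=2$ at each step, while the paper uses the powers $a^i$; your variant even has the minor advantage of not implicitly requiring $a>1$.
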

\begin{proof}
Choose $a$ so that $q^a>\lambda$. By (\ref{exseq}), it is sufficient to find an infinite (proper) submodule filtration for $H_\lambda^0$. But this is done by taking the chain
\begin{equation}\label{infchain}
H_\lambda^0\supset\Bbbk{\bf G}v_0(q^a)\supsetneq\Bbbk{\bf G}v_0(q^{a^2})\supsetneq\cdots\supsetneq\Bbbk{\bf G}v_0(q^{a^i})\supsetneq\cdots
\end{equation}
thanks to Lemma \ref{bat}.
\end{proof}

Now we return to the general case. For any $\alpha_i\in\Delta$, let ${\bf P}_i={\bf B}\cup{\bf B}\dot{s_i}{\bf B}$ be the corresponding parabolic subgroup and ${\bf L}_i$ the Levi subgroup of ${\bf P}_i$. Let ${\bf U}_i$ be the unipotent radical of ${\bf P}_i$. Then ${\bf P}_i={\bf L}_i\ltimes {\bf U}_i$. Moreover, ${\bf B}_i={\bf B}\cap{\bf L}_i$ is a Borel subgroup of ${\bf L}_i$. By abusing of notation, we also denote by $\Bbbk_\theta$ for its restriction to ${\bf B}_i$, set $\mathbb{M}_i(\theta)=\Bbbk{\bf L}_i\otimes_{\Bbbk {\bf B}_i}\Bbbk_\theta$. Let ${\bf U}_i$ acts on $\mathbb{M}_i(\theta)$ trivially. Then $\mathbb{M}_i(\theta)$ becomes a ${\bf P}_i$-module.  The following result is an easy consequence of Lemma 2.2 in \cite{Xi}.

\begin{lemma}\label{trans}
The $\Bbbk {\bf G}$-module $\mathbb{M}(\theta)$ is isomorphic to $\Bbbk{\bf G}\otimes_{\Bbbk {\bf P}_i}\mathbb{M}_i(\theta)$.
\end{lemma}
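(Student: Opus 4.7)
The plan is to invoke transitivity of induction, after first identifying the ${\bf P}_i$-module $\mathbb{M}_i(\theta)$ with the induced module $\Bbbk{\bf P}_i\otimes_{\Bbbk{\bf B}}\Bbbk_\theta$. The key algebraic facts are the Levi decomposition ${\bf P}_i={\bf L}_i\ltimes{\bf U}_i$ together with ${\bf B}={\bf B}_i\ltimes{\bf U}_i$, which give ${\bf P}_i={\bf L}_i{\bf B}$ and ${\bf L}_i\cap{\bf B}={\bf B}_i$. In particular the inclusion ${\bf L}_i\hookrightarrow{\bf P}_i$ induces a bijection ${\bf L}_i/{\bf B}_i\xrightarrow{\sim}{\bf P}_i/{\bf B}$.

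First I would construct an explicit ${\bf P}_i$-linear map
\[
\Phi\colon\Bbbk{\bf P}_i\otimes_{\Bbbk{\bf B}}\Bbbk_\theta\longrightarrow\mathbb{M}_i(\theta),\qquad \ell u\otimes{\bf 1}_\theta\longmapsto \ell\otimes{\bf 1}_\theta,
\]
for $\ell\in{\bf L}_i$ and $u\in{\bf U}_i$. It is well-defined because ${\bf U}_i$ is contained in $\ker\theta$ when $\theta$ is viewed as a character of ${\bf B}$, and because ${\bf B}$-balancedness reduces, via the decomposition ${\bf B}={\bf B}_i{\bf U}_i$, to ${\bf B}_i$-balancedness. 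To see that $\Phi$ respects the ${\bf P}_i$-action it suffices to check equivariance under ${\bf L}_i$ (obvious by construction) and under ${\bf U}_i$; for the latter, since ${\bf U}_i$ is normal in ${\bf P}_i$, any $u'\in{\bf U}_i$ acts on $\ell u\otimes{\bf 1}_\theta$ by $u'\ell u\otimes{\bf 1}_\theta=\ell(\ell^{-1}u'\ell u)\otimes{\bf 1}_\theta$, which maps to $\ell\otimes{\bf 1}_\theta$, matching the trivial action of ${\bf U}_i$ on $\mathbb{M}_i(\theta)$. The map $\Phi$ is surjective, and injectivity follows by comparing bases indexed by ${\bf L}_i/{\bf B}_i$ and ${\bf P}_i/{\bf B}$ under the bijection above; this is essentially the content of \cite[Lemma 2.2]{Xi}.

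Once this identification is in place, the conclusion follows from the standard transitivity isomorphism
\[
\Bbbk{\bf G}\otimes_{\Bbbk{\bf P}_i}\mathbb{M}_i(\theta)\ \cong\ \Bbbk{\bf G}\otimes_{\Bbbk{\bf P}_i}\bigl(\Bbbk{\bf P}_i\otimes_{\Bbbk{\bf B}}\Bbbk_\theta\bigr)\ \cong\ \Bbbk{\bf G}\otimes_{\Bbbk{\bf B}}\Bbbk_\theta\ =\ \mathbb{M}(\theta),
\]
the middle isomorphism being the usual associativity of tensor products, sending $g\otimes(p\otimes{\bf 1}_\theta)$ to $gp\otimes{\bf 1}_\theta$. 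There is no real obstacle here; the only point requiring a moment of care is the verification that ${\bf U}_i$ acts trivially on $\Bbbk{\bf P}_i\otimes_{\Bbbk{\bf B}}\Bbbk_\theta$ after the identification, which is immediate from the normality of ${\bf U}_i$ in ${\bf P}_i$ and the fact that $\theta$ is trivial on ${\bf U}_i\subset{\bf U}$.
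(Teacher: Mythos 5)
Your proposal is correct and follows the same route the paper intends: the paper simply cites \cite[Lemma 2.2]{Xi} for exactly this identification of $\mathbb{M}_i(\theta)$ with $\Bbbk{\bf P}_i\otimes_{\Bbbk{\bf B}}\Bbbk_\theta$ followed by transitivity of induction, and you have just written out the details. All the verifications (well-definedness via ${\bf B}={\bf B}_i\ltimes{\bf U}_i$ and triviality of $\theta$ on ${\bf U}_i$, equivariance via normality of ${\bf U}_i$, and the coset comparison) are sound.
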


Let $G$ and $G'$ be connected reductive algebraic groups. An {\it isogeny} $\pi: G\rightarrow G'$ of algebraic groups is a surjective rational homomorphism with finite kernel. Such an isogeny $\pi$ is called a {\it central isogeny} if $\pi$ induces an isomorphism in the sense of algebraic groups of each root subgroup of $G$ onto its image. For details on isogenies, one can refer \cite{Sp}.

\begin{proof}[\it Proof of Theorem \ref{infinite}]
By assumption, $\langle\theta,\alpha_i^\vee\rangle>0$ for some $\alpha_i\in\Delta$. Denote ${\bf P}_i$, ${\bf L}_i$, ${\bf B}_i$, $\mathbb{M}_i(\theta)$ as above. Since ${\bf L}_i$ is a reductive group of rank 1, there is a central isogeny $\pi:~SL_2(\bar{\mathbb{F}}_q)\times T'\rightarrow{\bf L}_i$ which maps $B\times T'$ to ${\bf B}_i$, and $T\times T'$ to ${\bf T}$ (here $T'$ is a torus, and $B$, $T$ are the standard Borel subgroup and maximal torus of $SL_2(\bar{\mathbb{F}}_q)$, respectively). As $SL_2(\bar{\mathbb{F}}_q)\times T'$-modules, we see that
$$\mathbb{M}_i(\theta)  \cong \left(\op{Ind}_B^{SL_2(\bar{\mathbb{F}}_q)}\Bbbk_{\theta_i}\right)\otimes \Bbbk_{\theta'},$$
where $\Bbbk_{\theta_i}$ is the restriction of $\Bbbk_{\theta}$ to $T$ via $\pi$ and $\Bbbk_{\theta'}$ is the restriction of $\Bbbk_{\theta}$ to $T'$ via $\pi$.
Therefore, $\mathbb{M}_i(\theta)$ has infinite many composition factors by Corollary \ref{infsl2},  Since the functor $\Bbbk{\bf G}\otimes_{\Bbbk {\bf P}_i}-$ is exact ($\Bbbk{\bf G}$ is free over $\Bbbk{\bf P}_i$), it follows from Corollary \ref{infsl2} and Lemma \ref{trans} that $\mathbb{M}(\theta)$ has an infinite submodule filtration, i.e, it has no composition series (of finite length).
\end{proof}

\section{Conclusion and consequences}

Let $p$ be a prime number, and let $q$ be a power of $p$. Let $\theta\in \widehat{\bf T}$ be a character. To summarize, combining Theorem \ref{main}, \ref{antidominant}, \ref{infinite} yields the following
\begin{theorem}\label{conclusion}
If $\op{char}\Bbbk\neq p$, then all $\Bbbk {\bf G}$-modules $E(\theta)_J$ with $J\subset I(\theta)$ are irreducible and pairwise nonisomorphic. In particular, the  $\Bbbk {\bf G}$-module $\mathbb{M}(\theta)$ has $2^{|I(\theta)|}$ composition factors. If $\Bbbk=\bar{\mathbb{F}}_q$ and $\theta\in X({\bf T})$, then $\mathbb{M}(\theta)$ has a composition series of finite length if and only if $\theta$ is antidominant.
\end{theorem}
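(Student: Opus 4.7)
The theorem is a direct consolidation of Theorem \ref{main}, Theorem \ref{antidominant}, and Theorem \ref{infinite}; no substantive new argument is required, only the observation that these three results together cover the cases asserted. The plan is simply to assemble the pieces and make sure the counting is consistent with the filtration by the submodules $\mathbb{M}(\theta)_J$.

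For the first assertion, with $\op{char}\Bbbk \neq p$, I would invoke Theorem \ref{main}, which states exactly that each $E(\theta)_J$ ($J \subset I(\theta)$) is irreducible and that the $2^{|I(\theta)|}$ such modules are pairwise nonisomorphic. Pairwise non-isomorphism also follows from Proposition \ref{EJ}. Recalling that the submodules $\mathbb{M}(\theta)_J$ form a natural filtration of $\mathbb{M}(\theta)$ with successive subquotients $E(\theta)_J$ (by construction of $E(\theta)_J = \mathbb{M}(\theta)_J/\mathbb{M}(\theta)_J'$), refining this filtration yields a composition series with exactly $2^{|I(\theta)|}$ composition factors, each of multiplicity one. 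The non-vanishing of each $E(\theta)_J$ is guaranteed by the explicit basis in Proposition \ref{DesEJ}.

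For the second assertion, assume $\Bbbk = \bar{\mathbb{F}}_q$ and $\theta \in X({\bf T})$. The ``if'' direction is Theorem \ref{antidominant}: when $\theta$ is antidominant, each $E(\theta)_J$ is irreducible, so the $\mathbb{M}(\theta)_J$-filtration is already a composition series with $2^{|I(\theta)|}$ pairwise nonisomorphic factors. The ``only if'' direction is Theorem \ref{infinite}: when $\theta$ is not antidominant, the theorem produces an explicit infinite strictly descending chain of $\Bbbk{\bf G}$-submodules in $\mathbb{M}(\theta)$, which prohibits the existence of any composition series of finite length. Combining both directions recovers Theorem \ref{defchar} and completes the statement.

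Since all the hard technical work has been carried out earlier --- the $U_{q^a}$-fixed point technique and the operators $\tau_i$ in the cross-characteristic case, the power-sum identities of Lemma \ref{key} and the $SL_2$-reduction argument via Lemma \ref{A1} in the antidominant case, and the isogeny reduction to $SL_2(\bar{\mathbb{F}}_q)$ together with the structural results of \cite{De} in the non-antidominant case --- there is no remaining obstacle here. The only thing to verify is that the three previous theorems as stated precisely match the two cases of Theorem \ref{conclusion}, which they do.
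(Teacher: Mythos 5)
Your proposal matches the paper exactly: Theorem \ref{conclusion} is stated there as a summary obtained by "combining Theorem \ref{main}, \ref{antidominant}, \ref{infinite}," which is precisely your consolidation, with the counting of composition factors coming from the $\mathbb{M}(\theta)_J$-filtration as you describe. No gap; nothing further is needed.
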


Let $\op{tr}$ be the trivial ${\bf B}$-module. We denote $E(\op{tr})_J$ simply by $E_J$.  Since $\mathbb{M}(\op{tr})$ is realizable over any field, as a consequence of Theorem \ref{conclusion} we have
the following results which were first proved in \cite{CD1} and \cite{CD2}.
\begin{Cor}
Let $\Bbbk$ be any field. Then all  $\Bbbk {\bf G}$-modules $E_J$ with $J\subset I$ are irreducible and pairwisely nonisomorphic. In particular $\mathbb{M}(\op{tr})$ has $2^{|I|}$ composition factors.
\end{Cor}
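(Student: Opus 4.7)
The plan is to deduce this from Theorem \ref{conclusion} using that $\mathbb{M}(\op{tr})$ is ``realizable over any field''. First I would check that $\theta=\op{tr}$ satisfies the hypotheses: trivially $\langle\op{tr},\alpha^\vee\rangle=0$ for every simple root, so $\op{tr}$ is antidominant, and since $\op{tr}|_{{\bf T}_i}$ is trivial for every $i$ we have $I(\op{tr})=I$. Thus Theorem \ref{conclusion} directly yields everything when either $\op{char}\Bbbk\ne p$ (via Theorem \ref{main}) or $\Bbbk=\bar{\mathbb{F}}_q$ (via Theorem \ref{antidominant}). Pairwise non-isomorphism for an arbitrary field is already covered by Proposition \ref{EJ}, so the only remaining task is to prove the irreducibility of each $E_J$ in the case $\op{char}\Bbbk=p$, $\Bbbk\ne\bar{\mathbb{F}}_q$.

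For this I would use flat base change. Embed $\Bbbk$ into an algebraic closure $\Omega$; since $\Omega$ is algebraically closed of characteristic $p$ it contains a copy of $\bar{\mathbb{F}}_q$ as the algebraic closure of its prime field. The constructions of $\mathbb{M}(\op{tr})$, $\mathbb{M}(\op{tr})_J$, $\mathbb{M}(\op{tr})_J'$ and $E_J$ are all defined by formulas with integer coefficients, hence commute with extension of scalars, so writing $E_J^\Bbbk$, $E_J^{\bar{\mathbb{F}}_q}$, $E_J^\Omega$ for the corresponding modules over each coefficient field one obtains canonical isomorphisms $E_J^\Omega\cong E_J^\Bbbk\otimes_\Bbbk\Omega\cong E_J^{\bar{\mathbb{F}}_q}\otimes_{\bar{\mathbb{F}}_q}\Omega$. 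A proper nonzero $\Bbbk{\bf G}$-submodule of $E_J^\Bbbk$ would produce a proper nonzero $\Omega{\bf G}$-submodule of $E_J^\Omega$ by flatness, so it suffices to prove $E_J^\Omega$ is irreducible.

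The main obstacle is therefore upgrading the irreducibility of $E_J^{\bar{\mathbb{F}}_q}$ (given by Theorem \ref{antidominant}) to absolute irreducibility. For this I would mimic Proposition \ref{indec}: any $\bar{\mathbb{F}}_q{\bf G}$-endomorphism $f$ of $E_J^{\bar{\mathbb{F}}_q}$ is determined by $f(C_J)$, and $f(C_J)$ must satisfy $\dot{s_i}f(C_J)=-f(C_J)$ for $i\in J$ together with ${\bf U}_{\alpha_i}f(C_J)=f(C_J)$ for $i\notin J$. The uniqueness characterization of $C_J$ recorded in the proof of Proposition \ref{EJ}, combined with the basis from Proposition \ref{DesEJ}, forces $f(C_J)\in\bar{\mathbb{F}}_q\cdot C_J$, so $\op{End}_{\bar{\mathbb{F}}_q{\bf G}}(E_J^{\bar{\mathbb{F}}_q})=\bar{\mathbb{F}}_q$. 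This gives absolute irreducibility and hence irreducibility of $E_J^\Omega$, completing the descent to $E_J^\Bbbk$; the count $2^{|I|}$ of composition factors then follows by base-changing the filtration from the $\bar{\mathbb{F}}_q$ case.
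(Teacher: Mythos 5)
Your derivation is correct, but it takes a genuinely different route from what the paper (very tersely) indicates. The paper's justification is a single sentence: since $\mathbb{M}(\op{tr})$ is realizable over any field, the corollary follows from Theorem \ref{conclusion} (and the results were first proved in \cite{CD1}, \cite{CD2}). As you noticed, Theorem \ref{conclusion} as stated literally only covers $\op{char}\Bbbk\ne p$ and $\Bbbk=\bar{\mathbb{F}}_q$; the intended reading for the remaining case $\op{char}\Bbbk=p$, $\Bbbk\ne\bar{\mathbb{F}}_q$ is presumably that the entire machinery of Section 4 (Lemmas \ref{A1}, \ref{key}, \ref{c1}, \ref{c2}) goes through verbatim for $\theta=\op{tr}$ over any field of characteristic $p$, because the only place the restriction $\Bbbk=\bar{\mathbb{F}}_q$ was actually used was to guarantee that a rational $\theta$ takes values in $\Bbbk^*$ — a vacuous constraint when $\theta$ is trivial. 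That is, the paper's implicit strategy is to \emph{re-run the direct irreducibility proof} over the new coefficient field, whereas you instead \emph{descend} from the $\bar{\mathbb{F}}_q$ case by proving absolute irreducibility. Both routes work; yours is cleaner in that it avoids re-verifying the technical lemmas, at the cost of needing two extra ingredients that you should be explicit about.

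Two points of care in your argument. First, the implication ``irreducible with $\op{End}=\Bbbk$ implies irreducible after arbitrary field extension'' is not the finite-dimensional textbook statement, since $E_J$ is infinite-dimensional; you need the Jacobson density theorem here (given $0\ne\sum_i m_i\otimes\ell_i$ with the $m_i$ linearly independent over $\Bbbk=\op{End}$, density produces $a\in\Bbbk{\bf G}$ with $am_1=m_1$, $am_i=0$ for $i>1$, after which the standard argument closes). Second, in your computation of $\op{End}_{\bar{\mathbb{F}}_q{\bf G}}(E_J)$ you jump from the eigenvalue conditions on $f(C_J)$ to $f(C_J)\in\bar{\mathbb{F}}_q\cdot C_J$ via ``the uniqueness characterization in Proposition \ref{EJ}''; but that uniqueness statement is made inside the ${\bf T}$-fixed space $\Bbbk WC_J$, so you first need to record that $f$ commutes with ${\bf T}$, hence $f(C_J)$ lands in the ${\bf T}$-fixed space of $E_J$, which by Proposition \ref{DesEJ} (and the fact that the conjugation action of ${\bf T}$ on $\Bbbk{\bf U}_{w_Jw^{-1}}$ has only the scalars as fixed points) is exactly $\bigoplus_{w\in Z_J}\Bbbk\dot{w}C_J=\Bbbk WC_J$. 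With these two details filled in, your proof is complete and, as a bonus, gives a more conceptual explanation of the phrase ``realizable over any field'' than the paper itself offers.
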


Assume that $\Bbbk=\bar{\mathbb{F}}_q$ and $\theta\in X({\bf T})$. We call $\theta$ {\it strongly antidominant} if $\langle\theta,\alpha^\vee\rangle<0$ for any $\alpha\in\Delta$. As another consequence of Theorem \ref{conclusion}, we get a necessary and sufficient condition for the irreducibility of $\mathbb{M}(\theta)$.

\begin{Cor}\label{ifandonlyif}
Let $\Bbbk=\bar{\mathbb{F}}_q$ and $\theta\in X({\bf T})$. Then the $\Bbbk {\bf G}$-module $\mathbb{M}(\theta)$ is irreducible if and only if $\theta$ is strongly antidominant.
\end{Cor}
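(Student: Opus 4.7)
The plan is to reduce the statement to a combinatorial identification of $I(\theta)$ via evaluation on $\mathbf{T}_i$, and then invoke Theorem \ref{conclusion} directly. The hypothesis $\Bbbk = \bar{\mathbb{F}}_q$ and $\theta \in X(\mathbf{T})$ matches the framework of Sections 3--4, so the dichotomy of Theorem \ref{conclusion} applies: either $\theta$ is antidominant, in which case $\mathbb{M}(\theta)$ has a composition series of length exactly $2^{|I(\theta)|}$ with all composition factors pairwise nonisomorphic, or $\theta$ is not antidominant, in which case $\mathbb{M}(\theta)$ has no composition series of finite length and in particular cannot be irreducible.

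First I would record the following identification: for each $i \in I$,
\[
i \in I(\theta) \iff \langle \theta, \alpha_i^\vee \rangle = 0.
\]
Indeed, $\mathbf{T}_i = \mathbf{T} \cap \mathbf{G}_i$ is the one-dimensional torus $\{h_i(t) \mid t \in \bar{\mathbb{F}}_q^\ast\}$ introduced in Section~3, and since $\theta \in X(\mathbf{T})$ is rational we have $\theta(h_i(t)) = t^{\langle \theta, \alpha_i^\vee\rangle}$ for all $t \in \bar{\mathbb{F}}_q^\ast$. Thus $\theta|_{\mathbf{T}_i}$ is trivial iff the integer $\langle \theta, \alpha_i^\vee\rangle$ vanishes.

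Now I would assemble the two directions. For the ``if'' direction, if $\theta$ is strongly antidominant then $\langle \theta, \alpha_i^\vee\rangle < 0$ for every $i \in I$; in particular $\theta$ is antidominant and $I(\theta) = \varnothing$ by the identification above, so Theorem \ref{conclusion} gives that $\mathbb{M}(\theta)$ has exactly $2^0 = 1$ composition factor, i.e.\ is irreducible. For the ``only if'' direction, assume $\mathbb{M}(\theta)$ is irreducible. Then $\mathbb{M}(\theta)$ has a (length one) composition series, which by Theorem \ref{conclusion} forces $\theta$ to be antidominant; and since the number of composition factors is $2^{|I(\theta)|} = 1$, we must have $I(\theta) = \varnothing$. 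Applying the identification in the opposite direction, $\langle\theta,\alpha_i^\vee\rangle \leq 0$ and $\langle\theta,\alpha_i^\vee\rangle \neq 0$ for every $i$, i.e.\ $\theta$ is strongly antidominant.

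There is no real obstacle here: the proof is essentially a bookkeeping consequence of Theorem \ref{conclusion} together with the elementary computation of $\theta|_{\mathbf{T}_i}$. The only point that deserves a sentence of care is the equality $\theta(h_i(t)) = t^{\langle\theta,\alpha_i^\vee\rangle}$, which uses rationality of $\theta$ (so that the pairing makes sense as an integer exponent on $\bar{\mathbb{F}}_q^\ast$); without this hypothesis the cleaner equivalence above breaks down and the statement itself would need to be reformulated.
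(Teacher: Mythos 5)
Your proposal is correct and is exactly the argument the paper intends: the corollary is stated as an immediate consequence of Theorem \ref{conclusion}, with the only content being the observation that for rational $\theta$ one has $i\in I(\theta)$ iff $\langle\theta,\alpha_i^\vee\rangle=0$ (since $\theta(h_i(t))=t^{\langle\theta,\alpha_i^\vee\rangle}$ and $t\mapsto t^m$ is trivial on $\bar{\mathbb{F}}_q^\ast$ only for $m=0$), so that irreducibility is equivalent to antidominance together with $I(\theta)=\varnothing$, i.e.\ strong antidominance. No gaps.
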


Assume that $\Bbbk=\bar{\mathbb{F}}_q$ and $\theta\in X({\bf T})$ is strongly antidominant. Corollary \ref{ifandonlyif} suggests that the property of $\mathbb{M}(\theta)$ is analogous to the antidominant (in the sense of ``dot" action) Verma modules in the category $\mathcal{O}$ of complex semisimple Lie algebras (see Chapter 4 and Chapter 5 in \cite{Hum}).

\begin{Cor}\label{54}
Let $V$ be an $($abstract$)$ irreducible $\Bbbk{\bf G}$-module. Assume that one of the following statements holds:

\smallskip
\noindent$\op{(i)}$ $\op{char}\Bbbk\neq p$ and $V$ contains a $\bf B$-stable line;

\noindent$\op{(ii)}$  $\Bbbk=\bar{\mathbb{F}}_q$ and $V$ contains a ${\bf B}$-stable line on which ${\bf T}$ acts by some antidominant weight $\theta\in\widehat{\bf T}$.

\smallskip
\noindent Then $V$ is isomorphic to $E(\theta)_\varnothing$ for some $\theta\in\widehat{\bf T}$.
\end{Cor}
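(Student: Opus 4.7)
The plan is to reduce to showing that for $J \neq \emptyset$ the simple module $E(\theta)_J$ admits no ${\bf B}$-stable line of weight $\theta$, so that Theorem \ref{conclusion} forces $V \cong E(\theta)_\varnothing$.

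First, let $v$ span the given ${\bf B}$-stable line. By the universal property of induction the inclusion $\Bbbk v \hookrightarrow V$ extends to a nonzero $\Bbbk{\bf G}$-homomorphism $\phi : \mathbb{M}(\theta) \to V$ sending ${\bf 1}_\theta$ to $v$, which is surjective because $V$ is irreducible. Under assumption (i) or (ii), Theorem \ref{conclusion} gives a finite composition series of $\mathbb{M}(\theta)$ with pairwise nonisomorphic factors $\{E(\theta)_J : J \subset I(\theta)\}$, so $V \cong E(\theta)_J$ for some such $J$, and the task reduces to ruling out $J \neq \varnothing$.

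I would then invoke Proposition \ref{basis} to realize $E(\theta)_J$ as the submodule $E(\theta)_J' = \Bbbk{\bf G} D(\theta)_J$ of $\mathbb{M}(\theta, J')$, where $J' = I(\theta) \setminus J$. The crux of the argument is the computation
\[
\mathbb{M}(\theta, J')^{{\bf U}} = \Bbbk \cdot {\bf 1}_{\theta, J'},
\]
which I would prove from the Bruhat decomposition of ${\bf G}/{\bf P}_{J'}$: a nonidentity coset $g{\bf P}_{J'}$ is never ${\bf U}$-fixed, because such fixedness forces the parabolic $g{\bf P}_{J'}g^{-1}$ to contain ${\bf U}$ and hence ${\bf B}$, and distinct standard parabolics are non-conjugate, so $g \in N_{{\bf G}}({\bf P}_{J'}) = {\bf P}_{J'}$; the positive-dimensional cells therefore carry no nonzero ${\bf U}$-invariant, while the identity cell contributes exactly $\Bbbk \cdot {\bf 1}_{\theta, J'}$.

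Assume now $J \neq \varnothing$. Transporting $v$ across the isomorphism $V \cong E(\theta)_J'$ yields a nonzero ${\bf U}$-fixed weight-$\theta$ vector in $E(\theta)_J' \subset \mathbb{M}(\theta, J')$, which by the displayed equation must be a nonzero scalar multiple of ${\bf 1}_{\theta, J'}$. Hence ${\bf 1}_{\theta, J'} \in E(\theta)_J'$, and since ${\bf 1}_{\theta, J'}$ generates $\mathbb{M}(\theta, J')$ as a $\Bbbk{\bf G}$-module, $E(\theta)_J' = \mathbb{M}(\theta, J')$. By transitivity of induction along the inclusion ${\bf P}_{J'} \subset {\bf P}_{I(\theta)}$ (noting $\theta$ extends to ${\bf P}_{I(\theta)}$ since $\theta|_{{\bf T}_i} = 1$ for $i \in I(\theta)$), there is a $\Bbbk{\bf G}$-surjection $\mathbb{M}(\theta, J') \twoheadrightarrow \mathbb{M}(\theta, I(\theta))$, and $\mathbb{M}(\theta, I(\theta)) = E(\theta)_\varnothing'$ coincides with $E(\theta)_\varnothing$ via Proposition \ref{basis}. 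The irreducibility of $E(\theta)_J'$ then forces $E(\theta)_J \cong E(\theta)_\varnothing$, contradicting Proposition \ref{EJ} since $J \neq \varnothing$. Hence $J = \varnothing$ and $V \cong E(\theta)_\varnothing$.

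The main technical step is the one-dimensionality of $\mathbb{M}(\theta, J')^{{\bf U}}$; once that is in place, everything else is formal bookkeeping atop the structural results already developed in Sections 1--4.
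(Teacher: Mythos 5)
Your proposal is correct, and its first half (extending the ${\bf B}$-stable line to a surjection $\mathbb{M}(\theta)\twoheadrightarrow V$ and concluding $V\cong E(\theta)_J$ from Theorems \ref{main} and \ref{defchar}/\ref{antidominant}) is exactly the paper's argument. Where you diverge is in ruling out $J\neq\varnothing$. The paper simply observes that $E(\theta)_J^{\bf U}\neq 0$ forces $J=\varnothing$, which is implicitly read off from the basis of Proposition \ref{DesEJ}: for $J\neq\varnothing$ every $w\in Z_J$ has ${\bf U}_{w_Jw^{-1}}$ infinite, so no finite linear combination of basis vectors is ${\bf U}$-invariant. You instead compute ${\bf U}$-invariants of the ambient parabolically induced module, proving $\mathbb{M}(\theta,J')^{\bf U}=\Bbbk\,{\bf 1}_{\theta,J'}$ via the Bruhat decomposition of ${\bf G}/{\bf P}_{J'}$ and self-normalization of parabolics, and then derive a contradiction through the chain $E(\theta)_J'=\mathbb{M}(\theta,J')\twoheadrightarrow\mathbb{M}(\theta,I(\theta))=E(\theta)_\varnothing'$ together with Proposition \ref{EJ}. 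Both work; the paper's route is shorter and stays inside $E(\theta)_J$ itself, while yours isolates a clean geometric statement (one-dimensionality of ${\bf U}$-invariants in $\Bbbk{\bf G}\otimes_{\Bbbk{\bf P}_{J'}}\theta$, valid over any $\Bbbk$) that is of independent use. One small point to make explicit in your key computation: from the ${\bf U}$-stability of the finite support of an invariant vector you pass to each coset being ${\bf U}$-fixed; this uses that ${\bf U}$ is connected, so its orbits on ${\bf G}/{\bf P}_{J'}$ are irreducible and a finite orbit is a single point (equivalently, every nonidentity cell is infinite, so a finitely supported invariant cannot meet it). With that spelled out, the argument is complete.
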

\begin{proof}
Let $\Bbbk v$ be the $\bf B$-stable line in $V$. Then ${\bf U}$ acts on $v$ trivially and ${\bf T}$-acts on $v$ by some character $\theta\in\widehat{\bf T}$. Since $V=\Bbbk{\bf G}v$, $V$ is an irreducible quotient $\mathbb{M}(\theta)$.
By Theorem \ref{main} and \ref{defchar}, we have $V=E(\theta)_J$ for some $J\subset I(\theta)$.
Therefore $E(\theta)_J^{\bf U}\neq0$, which forces $J=\varnothing$.
This completes the proof.
\end{proof}

\noindent It follows immediately from Corollary \ref{54} that
\begin{Cor}
Assume either $(1)$ $\op{char}\Bbbk\neq p$ or $(2)$ $\Bbbk=\bar{\mathbb{F}}_q$ and $\theta$ is antidominant, then
the $\Bbbk {\bf G}$-module $\mathbb{M}(\theta)$ has simple head which is isomorphic to $E(\theta)_\varnothing$.
\end{Cor}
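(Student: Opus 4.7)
The plan is to combine the finite-length result (Theorem \ref{conclusion}) with the classification of irreducible quotients containing a ${\bf B}$-stable line (Corollary \ref{54}). Under either hypothesis, Theorem \ref{conclusion} says that $\mathbb{M}(\theta)$ has a composition series of finite length with composition factors $E(\theta)_J$, $J\subset I(\theta)$, each of multiplicity one. In particular the radical $\op{rad}\mathbb{M}(\theta)$ is a proper submodule and the head $H:=\mathbb{M}(\theta)/\op{rad}\mathbb{M}(\theta)$ is a nonzero semisimple module, so $H$ decomposes as a finite direct sum of simple ${\bf G}$-quotients of $\mathbb{M}(\theta)$.

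First I would identify all simple quotients of $\mathbb{M}(\theta)$. If $V$ is any such quotient, then the image $\bar{\bf 1}_\theta$ of ${\bf 1}_\theta$ in $V$ is nonzero since ${\bf 1}_\theta$ generates $\mathbb{M}(\theta)$, and $\Bbbk\bar{\bf 1}_\theta$ is a ${\bf B}$-stable line on which ${\bf T}$ acts by $\theta$ (and by hypothesis $\theta$ is antidominant in case $(2)$). Corollary \ref{54} then applies and forces $V\simeq E(\theta)_\varnothing$. Hence every simple summand of $H$ is isomorphic to $E(\theta)_\varnothing$.

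Finally I would use the multiplicity count: Theorem \ref{conclusion} says $E(\theta)_\varnothing$ occurs exactly once as a composition factor of $\mathbb{M}(\theta)$, and therefore at most once in the semisimple quotient $H$. Combining this with the previous paragraph, $H$ has exactly one simple summand and $H\simeq E(\theta)_\varnothing$, proving that $\mathbb{M}(\theta)$ has simple head isomorphic to $E(\theta)_\varnothing$.

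There is no real obstacle here, as the substantive content has been absorbed into the earlier theorems; the only thing to be careful about is the verification that the hypotheses of Corollary \ref{54} are met (in case $(2)$ this is where one uses the antidominance of $\theta$, which is inherited by the $\theta$-weight line in any quotient), together with the observation that a simple quotient of $\mathbb{M}(\theta)$ is automatically a composition factor and thus subject to the multiplicity-one statement of Theorem \ref{conclusion}.
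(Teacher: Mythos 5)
Your argument is correct and is essentially the paper's own (the paper simply states that the corollary "follows immediately from Corollary \ref{54}"): every simple quotient contains the ${\bf B}$-stable line spanned by the image of ${\bf 1}_\theta$, hence is $E(\theta)_\varnothing$ by Corollary \ref{54}, and the multiplicity-one statement forces the semisimple head to be a single copy. Your write-up just makes these routine steps explicit.
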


\begin{Cor}
Assume $\Bbbk$ is algebraically closed with $\op{char}\Bbbk\neq\op{char}\bar{\mathbb{F}}_q$.
Then any finite-dimensional irreducible representation of $\bf G$ is one-dimensional and isomorphic to $E(\theta)_\varnothing$ for some $\theta\in\widehat{\bf T}$ with $I(\theta)=I$.
\end{Cor}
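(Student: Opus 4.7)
The plan is to show that any finite-dimensional irreducible representation $\rho:{\bf G}\to GL(V)$ must contain $[{\bf G},{\bf G}]$ in its kernel; Schur's lemma will then force $\dim V=1$, and Corollary \ref{54} will identify $V$ with $E(\theta)_\varnothing$ for some $\theta\in\widehat{\bf T}$. The condition $I(\theta)=I$ follows automatically since each ${\bf T}_i$ will turn out to lie in the derived subgroup, hence in $\ker\rho$.

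Fix a root $\alpha\in\Phi$ and set $d=\dim V$. Every nonidentity element of ${\bf U}_\alpha\cong(\bar{\mathbb{F}}_q,+)$ has order $p=\op{char}\bar{\mathbb{F}}_q$, so in view of $\op{char}\Bbbk\neq p$ it acts semisimply on $V$. The commuting semisimple family $\rho({\bf U}_\alpha)$ can therefore be simultaneously diagonalized, after which it embeds into the finite group of diagonal matrices of size $d$ with $p$-th-root-of-unity entries. Consequently $\rho({\bf U}_\alpha)$ is finite of order at most $p^d$, and the kernel $K_\alpha:=\{c\in\bar{\mathbb{F}}_q\mid\rho(\varepsilon_\alpha(c))=1\}$ is a finite-index additive subgroup of $\bar{\mathbb{F}}_q$. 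The conjugation relation $t\varepsilon_\alpha(c)t^{-1}=\varepsilon_\alpha(\alpha(t)c)$ now shows that $K_\alpha$ is stable under multiplication by $\alpha({\bf T})=\bar{\mathbb{F}}_q^*$ (the latter equality because $\alpha$ is a nontrivial rational character of ${\bf T}$). But the only $\bar{\mathbb{F}}_q^*$-stable additive subgroups of $\bar{\mathbb{F}}_q$ are $\{0\}$ and $\bar{\mathbb{F}}_q$ itself, and the former has infinite index; hence $K_\alpha=\bar{\mathbb{F}}_q$, i.e.\ ${\bf U}_\alpha$ acts trivially on $V$.

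Applying this to every $\alpha\in\Phi$ and using that $[{\bf G},{\bf G}]$ is generated as an abstract group by the root subgroups ${\bf U}_\alpha$ (via the Chevalley commutator relations and ${\bf T}$-conjugation), we obtain $[{\bf G},{\bf G}]\subset\ker\rho$. Thus $V$ becomes an irreducible module for the abelian group ${\bf G}/[{\bf G},{\bf G}]$, and since $\Bbbk$ is algebraically closed, Schur's lemma gives $\dim V=1$. Then $V$ itself is a ${\bf B}$-stable line, so Corollary \ref{54}(i) yields $V\cong E(\theta)_\varnothing$ for some $\theta\in\widehat{\bf T}$. Because each ${\bf T}_i\subset{\bf G}_i\subset[{\bf G},{\bf G}]$ acts trivially on $V$, the character $\theta$ is trivial on every ${\bf T}_i$, so $I(\theta)=I$, completing the proof.

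The main difficulty lies in the second paragraph: the semisimplicity estimate $|\rho({\bf U}_\alpha)|\leq p^d$ only ensures that $K_\alpha$ has finite index, and a priori $\bar{\mathbb{F}}_q$ admits many finite-index additive subgroups. The crucial point is to combine this finite-index conclusion with the ${\bf T}$-equivariance of $K_\alpha$: the scarcity of $\bar{\mathbb{F}}_q^*$-stable additive subgroups of $\bar{\mathbb{F}}_q$ then collapses the possibilities and forces $K_\alpha=\bar{\mathbb{F}}_q$. Everything else is a routine application of Schur's lemma and the results already established in the paper.
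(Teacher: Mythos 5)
Your proposal is correct, but it takes a genuinely different route from the paper. The paper's proof is short and non-elementary: it passes to the semisimple quotient ${\bf G}'={\bf G}/R({\bf G})$, lets $R({\bf G})=Z({\bf G})^0$ act by scalars via Schur's lemma, and then quotes Borel--Tits (Theorem 10.3 and Corollary 10.4 of \cite{BT}) to conclude that every nontrivial irreducible representation of $\Bbbk{\bf G}'$ is infinite dimensional, whence $\dim V=1$. You instead argue directly at the level of root subgroups: since $\op{char}\Bbbk\neq p$, the image of the elementary abelian $p$-group ${\bf U}_\alpha$ in $GL(V)$ is simultaneously diagonalizable with $p$-th-root-of-unity entries, hence finite, so its kernel $K_\alpha$ is a finite-index (in particular nonzero) additive subgroup of $\bar{\mathbb{F}}_q$; the relation $t\varepsilon_\alpha(c)t^{-1}=\varepsilon_\alpha(\alpha(t)c)$ makes $K_\alpha$ an $\bar{\mathbb{F}}_q^*$-stable subgroup, i.e.\ an ideal of the field, forcing $K_\alpha=\bar{\mathbb{F}}_q$. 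Since the derived group is generated by the ${\bf U}_\alpha$, the representation factors through an abelian quotient and Schur's lemma gives $\dim V=1$; the identification with $E(\theta)_\varnothing$ via Corollary \ref{54} and the conclusion $I(\theta)=I$ (because each ${\bf T}_i\subset{\bf G}_i$ lies in the subgroup generated by root subgroups) are as in the paper. Your argument is self-contained and elementary, avoiding the deep Borel--Tits machinery, at the cost of being longer; the paper's citation-based proof is shorter but imports a hard external theorem. Both are valid; the only step you state a little loosely is that $[{\bf G},{\bf G}]$ is generated by the root subgroups, which is the standard fact that the algebraic derived group of a connected reductive group is the (closed, connected) subgroup generated by all ${\bf U}_\alpha$, and that is all your argument actually needs.
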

\begin{proof} Let $V$ be a finite-dimensional irreducible representation of ${\bf G}$. Let $R({\bf G})$ be the radical of $\bf G$, then  ${\bf G}'= {\bf G}/R({\bf G})$ is semisimple. Since $R({\bf G})=Z({\bf G})^0$, then each element of $R({\bf G})$ acts on $V$ as a scalar by Schur's lemma. According to \cite[Theorem 10.3 and Corollary 10.4]{BT}, we know that except the trivial representation, all other irreducible representations of $\Bbbk {\bf G}'$ are infinite-dimensional. Therefore we know  that $V$ must be one-dimensional. In particular $V$ contains a $\bf B$-stable line, where $\bf B$ is a Borel subgroup of $\bf G$.
By Corollary \ref{54}, $V$ is isomorphic to $E(\theta)_\varnothing$ for some $\theta\in\widehat{\bf T}$. Since  $V$ is one-dimensional, then $I(\theta)=I$.  The corollary is proved.
\end{proof}

It is well known that any irreducible $G_q$-module over $\bar{\mathbb{F}}_q$ contains a unique $B_q$-stable line. This no longer holds for ${\bf G}$. For example, $E(\theta)_J$ has no ${\bf B}$-stable line if $J\neq\varnothing$ by the proof of Corollary \ref{54}. The classification of all abstract irreducible representations of ${\bf G}$ is still out of reach, and a new approach is needed to settle this problem.

\bigskip
\noindent{\bf Acknowledgements}\ \  The authors are grateful to
Professor Nanhua Xi for his helpful suggestions and comments in
writing this paper. The first named author would like to thank
Professor Jianpan Wang and Professor Naihong Hu for their valuable advices.  The second named author thanks Professor Toshiaki Shoji for his helpful discussion and comments. Both authors thank referees for their careful reading and valuable advices and comments to this paper.

\bibliographystyle{amsplain}

\end{document}